\let\emptyset \undefined
\let\ge       \undefined
\let\le       \undefined
\let\le\le
\let\geq\ge
\theoremstyle{plain}
\newtheorem{theorem}{Theorem}[section]
\theoremstyle{remark}
\newtheorem{remark}[theorem]{Remark}
\newtheorem{definition}[theorem]{Definition}
\theoremstyle{plain}
\newtheorem{corollary}[theorem]{Corollary}
\newtheorem{lemma}[theorem]{Lemma}
\newtheorem{proposition}[theorem]{Proposition}
\newtheorem{assumption}[theorem]{Assumption}
\numberwithin{equation}{section}
\def\Z{{\mathbb Z}}
\def\R{{\mathbb R}}
\def\C{{\mathbb C}}
\newcommand{\E}{{\mathbb E}}
\renewcommand{\P}{{\mathbb P}}
\newcommand{\F}{{\mathscr F}}
\renewcommand{\a}{\alpha}
\renewcommand{\b}{\beta}
\newcommand{\g}{\gamma}
\newcommand{\e}{\varepsilon}
\renewcommand{\O}{\Omega}
\newcommand{\abs}[1]{\left|#1\right|}
\newcommand{\ud}[0]{\,\mathrm{d}}
\newcommand{\intav}[0]{-\!\!\!\!\!\!\int}
\newcommand{\dydt}[0]{\tfrac{\ud y\ud t}{t^{n+1}}}
\newcommand{\ran}{\mathsf{R}}
\newcommand{\dom}{\mathsf{D}}
\newcommand{\calL}{{\mathscr L}}
\newcommand{\n}{\Vert}
\newcommand{\s}{^*}
\newcommand{\lb}{\langle}
\newcommand{\rb}{\rangle}
\newcommand{\limn}{\lim_{n\to\infty}}
\newcommand{\sumn}{\sum_{n\ge 1}}
\newcommand{\sumj}{\sum_{j\ge 1}}
\newcommand{\beq}{\begin{equation}}
\newcommand{\eeq}{\end{equation}}
\newcommand{\bal}{\begin{aligned}}
\newcommand{\eal}{\end{aligned}}
\newcommand{\ben}{\begin{enumerate}}
\newcommand{\een}{\end{enumerate}}
\newcommand{\bit}{\begin{itemize}}
\newcommand{\eit}{\end{itemize}}
\begin{document}

\author{Tuomas Hyt\"onen}
\address{Department of Mathematics and Statistics\\
University of Helsinki\\
Gustaf H\"all\-str\"omin katu 2b\\
FI-00014 Helsinki\\ 
Finland}
\email{tuomas.hytonen@helsinki.fi}

\author{Jan van Neerven}
\address{
Delft Institute of Applied Mathematics\\
Delft University of Technology 
\\P.O. Box 5031\\
2600 GA Delft\\
The Netherlands}
\email{J.M.A.M.vanNeerven@tudelft.nl}

\author{Pierre Portal}
\address{
Mathematical Sciences Institute\\
Building 27\\
Australian National University\\
ACT 0200\\
Australia} 
\email{Pierre.Portal@maths.anu.edu.au}

\title[Conical square functions in UMD Banach spaces]
{Conical square function estimates in UMD Banach spaces and applications 
to $H^{\infty}$-functional calculi}

\begin{abstract}
We study conical square function estimates for Banach-valued functions, 
and introduce a vector-valued analogue of the Coifman--Meyer--Stein tent 
spaces.
Following recent work of Auscher--M\(^{\mathrm{c}}\)Intosh--Russ, the tent 
spaces in turn are used to construct a scale of vector-valued Hardy spaces 
associated with a given bisectorial operator \(A\) with certain off-diagonal 
bounds, such that \(A\) always has a bounded \(H^{\infty}\)-functional 
calculus on these spaces.
This provides a new way of proving functional calculus of \(A\) 
on the Bochner spaces \(L^p(\R^n;X)\) by checking appropriate conical 
square function estimates, and also  a conical analogue of Bourgain's 
extension of the Littlewood-Paley theory to the UMD-valued context.
Even when \(X=\C\), our approach gives refined \(p\)-dependent versions 
of known results.
\end{abstract}

\subjclass[2000]{Primary: 46B09; Secondary: 42B25, 42B35, 46E40, 47A60, 47F05}
\keywords{Vector-valued tent spaces, UMD-spaces, $H^{\infty}$-functional calculus, Hardy spaces 
associated with operators,
$\g$-radonifying operators, $\g$-boundedness,
off-diagonal estimates, Schur estimates}

\date\today

\maketitle

\section{Introduction}
Since the development of the Littlewood-Paley theory, square function estimates of the form
\[ \Big\| \Big(\int_{0}
^{\infty}\big|t\sqrt{\Delta}e^{-t\sqrt{\Delta}}f\big|^{2}\frac{\ud t}{t}
\Big)^{\frac{1}{2}} \Big\|_{L^{p}(\R^n)} \eqsim \|f\|_{L^{p}(\R^n)},\]
have been widely used in harmonic analysis.
When dealing with functions which takes values in a UMD Banach space \(X\), such 
estimates have to be given an appropriate meaning. 
This is done through a linearisation of the square function
using randomisation, which gives (see \cite{lps-umd})
\[ \Big\|\int  _{0} ^{\infty} t\sqrt{\Delta}e^{-t\sqrt{\Delta}}f 
\frac{\ud Wt}{\sqrt{t}} \Big\|_{L^{2}(\Omega;L^p(\R^n;X))} \eqsim \|f\|_{L^{p}(\R^{n};X)},\]
where the integral is a  
Banach space-valued stochastic integral with respect to a standard 
Brownian motion $W$ on a probability space $(\Omega,\P)$ (see \cite{NW}), 
or,  in a simpler discrete form,
\begin{equation}
\label{bourgain-LP}  \Big\|\sum  _{k \in \Z} \varepsilon_{k}
2^{k}\sqrt{\Delta}e^{-2^{k}\sqrt{\Delta}}f \Big\|_{L^{2}(\Omega;L^p(\R^n;X))} 
\eqsim \|f\|_{L^{p}(\R^{n};X)}
\end{equation}
where \((\varepsilon_{k})\) is a sequence of independent Rademacher variables
on $(\Omega,\P)$. 
The latter was proven by Bourgain in \cite{Bou}, thereby starting the development of 
harmonic analysis for UMD-valued functions.
In recent years, research in this field has accelerated as 
it appeared that its tools, and in particular square function estimates, 
are of fundamental importance in the study of the \(H^{\infty}\)-functional 
calculus (see \cite{KuWe}) and in stochastic analysis in UMD Banach spaces 
(see \cite{NVW}).

To some extent, even the scalar-valued theory (i.e. $X=\C$) has benefited 
from this probabilistic point of view (see for instance \cite{HMP,leM}).
However this fruitful linearisation has, so far, been limited to the above 
``vertical" square functions estimates, leaving aside the ``conical" estimates 
of the form
\begin{equation} \label{conical-intro}
  \Big(\int  _{\R^{n}} \Big(\iint _{|y-x|<t}
  \big|t\sqrt{\Delta}e^{-t\sqrt{\Delta}}f(y)\big|^{2}\frac{\ud y\ud t}{t^{n+1}}\Big)^{\frac{p}{2}}\ud x\Big)^{\frac{1}{p}}
  \eqsim \|f\|_{L^{p}(\R^n)}, \quad 1<p\leq 2.
\end{equation}
In the meantime, such estimates have attracted much attention as it was
realised that they could be used to extend the real variable theory of Hardy
spaces in a way which is suitable to treat operators beyond the
Calder\'on-Zygmund class (see \cite{AMR, duong-yan,hoffman-mayboroda}). Indeed, elliptic operators of the form \(-{\rm div}
B\nabla\), where \(B\) is a matrix with $L^{\infty}$ entries, are not, in
general, sectorial on $L^{p}$ for all $1<p<\infty$. Their study thus requires
the $L^{p}$-spaces to be replaced by appropriate Hardy spaces, on which they have good functional calculus properties (in the same way as $L^{1}$ has to be replaced by $H^{1}$ when dealing with the Laplacian). To define such spaces, conical square functions have to be used, since the use of vertical ones would 
impose severe restrictions on the class of operators under consideration 
(namely, $L^{p}$ \hbox{($R$-)}sectoriality).

The present paper gives extensions of (\ref{conical-intro}) to the UMD-valued context.
This starts with the construction of appropriate tent spaces, which is carried out in Section \ref{sect:tent} by reinterpreting and extending \cite{HTV} using the methods of stochastic analysis in Banach spaces from \cite{KW,NVW,NW}. 
Relevant notions and results from this theory are recalled in Section \ref{sect:prelim}, while the crucial technical estimate is proven in Section \ref{sect:mainest}. 
Following ideas developed in \cite{AMR}, we then prove appropriate estimates for operators acting on these tent spaces in Section \ref{sect:od}. After collecting some basic results on bisectorial operator in Section \ref{sect:calculus}, this allows us in Section \ref{sect:hardy} to define Hardy spaces 
associated with bisectorial operators of the form $A \otimes I_{X}$, where \(A\) acts on $L^{2}(\R^n,H)$ ($H$ being a Hilbert space and $X$ a UMD Banach space) and satisfies suitable off-diagonal estimates.
We prove that $A \otimes I_{X}$ always has an $H^\infty$-functional calculus on these
Hardy spaces. 
Finally, in Section \ref{sect:diffop}, we specialise to differential operators \(A\), and, in particular, give a conical analogue to Bourgain's square function estimate (\ref{bourgain-LP}).

Specialising to the case \(X=\C\), our approach allows to define Hardy spaces 
(associated with operators) using a class of functions which is wider than in \cite{AMR}. This is due to the fact that our estimates (see Proposition \ref{prop:Schur}) are directly obtained for a given value of \(p\) (and actually depend on the type and cotype of \(L^{p}\)), instead of using interpolation.\\

To conclude this introduction, let us now point out the possible uses of our results.
First, one can deduce the boundedness of the functional calculus of an operator $A\otimes I_X$ from conical square function estimates. For instance, with Theorem \ref{thm:hisl}, we recover the well-known fact that,
if $X$ is UMD and $1<p<\infty$,  $\Delta\otimes I_X$ admits an
$H^\infty$-calculus on $L^p(X)$. Note that this
characterises the UMD spaces among all Banach spaces and thus indicates that it cannot be 
expexted that the results presented here extend beyond the UMD setting.

Another application is to deduce conical square function estimates for functions with limited decay from such estimates for functions with good decay properties.
In particular, Theorem \ref{thm:hisl} together with Theorem \ref{thm:hardy} give the following estimates: 
We use the notations
\begin{equation*}\begin{split}
  S_\theta^+ &= \{z\in \C\setminus\{0\}: \ |\arg(z)|<\theta\}, \\
  \Psi_{\alpha}^\beta(S_{\theta}^+)  &= 
  \big\{f \in H^{\infty}(S_{\theta}^+): \ \exists C \ \ |f(z)| \leq C\min(|z|^{\alpha},|z|^{-\beta})\ \forall\ z\in S_\theta^+\big\}.
\end{split}\end{equation*}
Let \(\theta,\varepsilon>0\), and assume that either
\begin{equation*}\begin{split}
  &\psi \in \Psi^{n/2+\varepsilon}_{1}(S_{\theta}^{+})
  \quad \text{and} \quad 1<p<\frac{2n}{n-2},\qquad\text{or} \\
  &\psi \in \Psi ^{1} _{n/2+\varepsilon}(S_{\theta}^{+})
  \quad \text{and} \quad \frac{2n}{n+2}<p<\infty.
\end{split}\end{equation*}
Then
\begin{equation*}
  \int_{\R^n} \Big( \iint_{|y-x|<t}
  |\psi(t\Delta)u(y)|^{2}\frac{\ud y\ud t}{t^{n+1}}\Big)^{p/2}\ud x \eqsim \|u\|_{L^{p}}^{p}.
\end{equation*}

\subsection*{Acknowledgments}
This paper was started while Tuomas Hyt\"onen and Jan van Neerven visited the Centre for Mathematics and its Applications (CMA) at
the Australian National University (ANU), and it was finished during Pierre Portal's visit to the Department of Mathematics and Statistics at the University of Helsinki.
Tuomas Hyt\"onen was supported by the Academy of Finland (SA) project 114374 ``Vector-valued singular integrals'', and by the CMA while in Canberra.
Jan van Neerven was supported by the VIDI subsidy 639.032.201 and VICI subsidy 639.033.604 of the Netherlands Organisation for Scientific Research (NWO).
Pierre Portal was supported by the CMA and the Australian Research Council as a postdoctoral fellow, and by the above-mentioned SA project while in Helsinki. He would like to thank Alan M$^{\rm c}$Intosh for his guidance.
The authors also wish to thank Alan M$^{\rm c}$Intosh for his kind hospitality at ANU, and for many discussions which motivated and influenced this work.

\section{Preliminaries}
\label{sect:prelim}

In this section we establish some terminology and collect
auxiliary results needed in the main body of the paper.

Let $X$ and $Y$ be Banach spaces and let $\calL(X,Y)$ denote the space of 
all bounded linear operators acting from $X$ into $Y$.
A family of bounded operators $\mathscr{T}\subseteq \calL(X,Y)$ is
called {\em $\g$-bounded} if there is a constant $C$ such that 
for all integers $k\ge 1$ and all 
$T_1,\dots,T_k\in \mathscr{T}$ and $\xi_1,\dots,\xi_j\in X$ we have
\beq\label{eq:g-bdd}
\E \Big\n \sum_{j=1}^k \g_j T_j \xi_j\Big\n^2 \le C^2
\E \Big\n \sum_{j=1}^k \g_j \xi_j\Big\n^2 .
\eeq
Here, $\g_1,\dots,\g_k$ are independent standard normal 
variables defined on some probability space $(\O,\F,\P)$ and $\E$ denotes
the expectation with respect to $\P$. 
The least admissible constant in \eqref{eq:g-bdd} is denoted by
$\g(\mathscr{T})$.

By the Kahane-Khintchine inequality,
the exponent $2$ may be replaced by any exponent $1\le p<\infty$
at the cost of a possibly different constant.

Upon replacing the standard normal variables by Rademacher variables
in \eqref{eq:g-bdd} one arrives
at the notion of $R$-boundedness. Every $R$-bounded family is $\g$-bounded, and
the converse holds if $Y$ has finite cotype. Since we are primarily
interested in UMD spaces $Y$, which have finite cotype, 
the distinction between $\g$-boundedness and $R$-boundedness is immaterial.
We prefer the former since our techniques are Gaussian and therefore the use of
Gaussian variables seems more natural. 

Let $H$ be a Hilbert space.  
A linear operator $R: H\to X$ is called {\em $\g$-summing}
if
$$ \n R\n_{\g^\infty(H,X)} := \sup \Big(\E \Big\n \sum_{j=1}^k \g_j
Rh_j\Big\n^2\Big)^\frac12 < \infty,$$
where the supremum is taken over all integers $k\ge 1$ and all finite orthonormal systems
$h_1,\dots,h_k$ in $H$.  
The space $\g^\infty(H,X)$, endowed with
the above norm, is a Banach space. The closed subspace of $\g^\infty(H,X)$
spanned by the finite rank operators is denoted by $\g(H,X)$. A linear operator
$R:H\to X$ is said to be {\em $\g$-radonifying} if it belongs to $\g(H,X)$.

A celebrated result of Hoffman-J{\o}rgensen and Kwapie\'n \cite{HJ, Kwap}
implies that $$ \g^\infty(H,X) = \g(H,X)$$
for Banach spaces $X$ not containing an isomorphic copy of $c_0$.

If $H$ is separable with orthonormal basis $(h_n)_{n\ge 1}$, then an operator $R:H\to X$
is $\g$-radonifying if and only if
the sum $\sum_{n\ge 1} \g_n Rh_n$ converges in $L^2(\O;X)$, in which case we
have $$ \n R\n_{\g(H,X)} =\Big(\E\Big\n \sum_{j\ge 1} \g_j Rh_j\Big\n^2\Big)^\frac12.$$

The following criterium for membership of $\g(H,X)$ will be referred to as {\em
covariance domination}.

\begin{proposition}\label{prop:CD}
Let $S\in \calL(H,X)$ and $T\in \g(H,X)$ satisfy
\[
 \n S\s \xi\s \n \le C \n T\s \xi\s\n, \quad \xi\s\in X\s,
\]
with $C$ independent of $\xi\s$. Then $S\in \g(H,X)$ and  $\n S\n_{\g(H,X)}\le C
\n T\n_{\g(H,X)}$.
\end{proposition}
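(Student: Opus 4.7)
The plan is to realise $S$ as the composition $S = T V\s$ with $V \in \calL(H)$ of norm at most $C$, and then invoke the standard ideal property of $\g(H,X)$ under right composition by bounded Hilbert space operators.

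\textbf{Step 1: factorisation.} Since $\n S\s \xi\s \n \le C \n T\s \xi\s\n$, whenever $T\s \xi_1\s = T\s \xi_2\s$ we automatically have $S\s \xi_1\s = S\s \xi_2\s$. Hence the prescription $T\s \xi\s \mapsto S\s \xi\s$ defines a linear map on $\ran(T\s) \subseteq H$ with operator norm at most $C$. Extending by continuity to $\overline{\ran(T\s)}$ and by zero on the orthogonal complement yields an operator $V \in \calL(H)$ with $\n V\n \le C$ and $V T\s = S\s$. Taking Hilbert-space adjoints under the canonical identification $H = H\s$, this rewrites as $S = T V\s$ with $\n V\s\n = \n V\n \le C$.

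\textbf{Step 2: ideal property.} The space $\g(H,X)$ is a right ideal in $\calL(H,X)$ under composition with $\calL(H)$: if $R \in \g(H,X)$ and $U \in \calL(H)$, then $RU \in \g(H,X)$ with $\n RU\n_{\g(H,X)} \le \n U\n_{\calL(H)} \n R\n_{\g(H,X)}$. Applied to $R = T$ and $U = V\s$, this immediately yields $S = T V\s \in \g(H,X)$ together with $\n S\n_{\g(H,X)} \le C \n T\n_{\g(H,X)}$, which is the claim.

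\textbf{Main obstacle.} Step 1 is routine once the kernel compatibility is noticed; the actual content lies in the ideal property invoked in Step 2. This is where the specific structure of $\g$-radonifying operators is needed, and it can be obtained along either of two equivalent routes. The first is via the Gaussian measure picture: the pushforward measure associated with $T$ has covariance $T T\s$, while that associated with $T V\s$ has covariance $T V\s V T\s \le \n V\n^2\, T T\s$, and the $\g$-norm is monotone in the covariance by a standard Anderson-type comparison. The second is a direct verification from the sequence characterisation of the $\g$-norm, writing $\sum_n \g_n T V\s h_n = T\bigl(\sum_n \g_n V\s h_n\bigr)$ for an orthonormal basis $(h_n)$, observing that $\sum_n \g_n V\s h_n$ is Gaussian in $H$ with covariance bounded by $\n V\n^2 I_H$, and comparing with the corresponding sum $\sum_n \g_n T h_n$ defining $\n T\n_{\g(H,X)}$.
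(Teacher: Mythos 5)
The paper states this proposition without proof, citing \cite{KW, NVW}, so there is no internal argument to compare against; your proof is the standard one for this result and it is correct. Step~1 is carried out carefully: the map $T\s\xi\s\mapsto S\s\xi\s$ is well defined and bounded on $\ran(T\s)$ by the hypothesis, and extending by continuity to $\overline{\ran(T\s)}$ and by zero on $\overline{\ran(T\s)}^{\perp}=\ker T$ gives $V\in\calL(H)$ with $\n V\n\le C$ and $S=TV\s$. Step~2 then reduces the claim to the right-ideal property $\n RU\n_{\g(H,X)}\le\n U\n_{\calL(H)}\n R\n_{\g(H,X)}$, which is a well-documented basic property of $\g$-radonifying operators.

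One caveat in your closing discussion deserves flagging. The manipulation $\sum_n\g_n TV\s h_n = T\bigl(\sum_n\g_n V\s h_n\bigr)$ is only formal: the series $\sum_n\g_n V\s h_n$ does not converge in $H$ unless $V\s$ is Hilbert--Schmidt, so one cannot literally pull $T$ outside. The rigorous version of that route expands each $V\s h_n$ in a finite orthonormal system of $H$, observes that the resulting Gaussian coefficient vector has covariance matrix at most $\n V\n^2 I$, and invokes a finite-dimensional Gaussian comparison (for instance, realise the dominated Gaussian vector as a conditional expectation of an independent standard one and apply Jensen's inequality). Note also that the ideal property is exactly equivalent to the proposition being proved (apply the proposition with $S=TU$ and $T\s U\s$ in place of $S\s$), so your argument is a reduction to a known, equivalent fact rather than a proof from first principles; that is perfectly legitimate here, but worth keeping the logical order of the two statements in mind when citing them.
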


For more details we refer to \cite{KW, NVW} and the references therein.

Let $(A,\Sigma,\mu)$ be a $\sigma$-finite measure space, $H$ a Hilbert spaces and $X$ a Banach space. 
In the formulation of the next result, which is a multiplier result due to
Kalton and Weis \cite{KW}, we identify $H \otimes X$-valued functions $f\otimes \xi$,\
where $f\in L^2(A,H)$ and $\xi\in X$, 
with the operator $R_{f\otimes
\xi}\in \g(L^2(A;H),X)$ defined by
\beq\label{eq:integral_operator}  
R_{f\otimes \xi} g := \langle f, g \rangle \otimes \xi, \quad g\in L^2(A;H).
\eeq
 where $\langle f,h\rangle$ denotes the scalar product on $L^2(A;H)$.

\begin{lemma}\label{lem:KW} Let $X$ be a Banach space, 
let $(A,\Sigma,\mu)$ be a $\sigma$-finite measure space,
and let $M:A\to \calL(X)$ be a function such that $a\mapsto M(a)\xi$
is strongly $\mu$-measurable for all $\xi\in X$. If the set 
$$\mathscr{M} = \{M(a): \ a\in A\}$$
is $\g$-bounded, then the mapping
$$ f(\cdot)\otimes \xi \mapsto f(\cdot)\otimes  M(\cdot)\xi, $$
extends to a bounded operator $M$ on $\g(L^2(A;H),X)$ of norm 
$\n M\n\le \gamma(\mathscr{M}).$ 
\end{lemma}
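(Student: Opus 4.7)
The plan is to prove the bound on the dense subspace of finite-rank operators in $\gamma(L^2(A;H),X)$ spanned by elementary tensors $f\otimes\xi$, splitting the argument into two stages: first handling simple operator-valued $M$ by a direct Gaussian-sum computation, then passing to general measurable $M$ by approximation, and finally extending by continuity.

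For a finite-rank $R=\sum_{j=1}^N f_j\otimes\xi_j$ and a simple $M=\sum_{i=1}^K \one_{A_i}M_i$, with $(A_i)$ a measurable partition of $A$ and $M_i\in\mathscr{M}$, I would exploit the orthogonal decomposition $L^2(A;H)=\bigoplus_i L^2(A_i;H)$ by choosing an orthonormal basis $(h_k^{(i)})_k$ of each summand. Since each $h_k^{(i)}$ is supported in $A_i$, a direct calculation gives
\[ R = \sum_{i,k} h_k^{(i)}\otimes \eta_k^{(i)}, \qquad MR = \sum_{i,k} h_k^{(i)}\otimes M_i\eta_k^{(i)}, \]
where $\eta_k^{(i)}:=\sum_j\langle f_j,h_k^{(i)}\rangle\xi_j$. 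Because $(h_k^{(i)})_{i,k}$ is orthonormal in $L^2(A;H)$, the two $\gamma$-norms are Gaussian sums with identical coefficients indexed by $(i,k)$, and the $\gamma$-boundedness of $\mathscr{M}$ applied to the doubly-indexed family $(M_i)_{(i,k)}\subset\mathscr{M}$ yields $\|MR\|_\gamma \le \gamma(\mathscr{M})\,\|R\|_\gamma$.

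For general $M$, fix $R$ as above. Since $a\mapsto M(a)\xi_j$ is strongly $\mu$-measurable for each of the finitely many vectors $\xi_j$ occurring in $R$, the Pettis measurability theorem applied to the $X^N$-valued map $a\mapsto(M(a)\xi_j)_{j=1}^N$ produces simple $\mathscr{M}$-valued functions $M_n(a):=\sum_i \one_{A_n^{(i)}}(a)\,M(a_n^{(i)})$ on refined partitions of $A$, with $M_n(\cdot)\xi_j\to M(\cdot)\xi_j$ $\mu$-a.e.\ and in $L^2$ for each $j$. The first step supplies the uniform bound $\|M_nR\|_\gamma\le\gamma(\mathscr{M})\,\|R\|_\gamma$; together with the convergence $M_nR\to MR$ in $\gamma$-norm this gives the bound for general $M$, and density of finite-rank operators extends it to all of $\gamma(L^2(A;H),X)$.

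The main obstacle is precisely the convergence $M_nR\to MR$ in $\gamma$-norm: we only have strong pointwise measurability of $a\mapsto M(a)\xi$ for each individual $\xi$, which does not provide uniform control across multiple vectors, so the simultaneous simple approximation must be carefully engineered and mere pointwise convergence of operators must be upgraded to convergence in $\gamma(L^2(A;H),X)$. Both issues are handled by reusing the orthonormal-basis expansion from the first step to reduce $\|M_nR-MR\|_\gamma$ to an explicit Gaussian sum in $X$, whose decay follows from the $L^2$-convergence of $M_n(\cdot)\xi_j$ combined with a dominated-convergence argument based on the uniform bound supplied by $\gamma(\mathscr{M})$.
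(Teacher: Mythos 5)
The paper itself does not prove this lemma --- it is stated as a multiplier result attributed to \cite{KW} --- so there is no internal proof to compare with, and I can only assess your argument on its own terms. Your first step is correct and gives the sharp constant: for simple $M=\sum_i\one_{A_i}M_i$, an orthonormal basis of $L^2(A;H)$ compatible with the partition makes $Rh_k^{(i)}=\eta_k^{(i)}$ and $(MR)h_k^{(i)}=M_i\eta_k^{(i)}$, so $\|R\|_{\gamma}$ and $\|MR\|_{\gamma}$ become Gaussian sums with the same $X$-valued coefficients, and the $\gamma$-boundedness of $\mathscr{M}$ compares them. The gap is in the passage to general $M$, where you claim $\|M_nR-MR\|_{\gamma}\to 0$.

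The difference $(M_n-M)R$ is not finite rank, since $M$ is not simple, so the partition-adapted ONB expansion from the first step cannot be ``reused'' for it. More fundamentally, the ``dominated-convergence argument'' you invoke is false at the level of $\gamma$-norms: the $\gamma$-norm of a multiplier is governed by the $\gamma$-bound of its range, not by the sup-norm of its values, so the fact that $\|(M_n(a)-M(a))\xi_j\|\to 0$ uniformly (or in $L^2$) does not force $\|(M_n-M)R\|_{\gamma}\to 0$; turning that pointwise smallness into a small $\gamma$-norm would require a type-$2$ inequality which fails for a general Banach space. (Comparing two simple approximants $M_n,M_{n'}$ on a common refinement only gives the non-decaying bound $2\gamma(\mathscr{M})\|R\|_{\gamma}$.) What rescues the argument is a $\gamma$-Fatou lemma: $T\mapsto\|T\|_{\gamma^{\infty}(L^2(A;H),X)}$ is lower semicontinuous with respect to the strong operator topology, being a supremum over finite orthonormal systems of the SOT-continuous quantities $\E\|\sum_j\gamma_j Th_j\|^2$. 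Your approximants do satisfy $M_nR\to MR$ in the strong (indeed uniform) operator topology, since $\|(M_n-M)R\|_{\calL(L^2(A;H),X)}\le\varepsilon_n\sum_j\|f_j\|_{L^2}$, and hence $\|MR\|_{\gamma^{\infty}}\le\liminf_n\|M_nR\|_{\gamma}\le\gamma(\mathscr{M})\|R\|_{\gamma}$. A short further step --- or the standing fact that $c_0\not\subset X$ for the UMD spaces relevant to this paper --- then upgrades $MR\in\gamma^{\infty}$ to $MR\in\gamma$, and density of finite-rank operators finishes the proof.
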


Let us also recall that for $1\le p<\infty$, the mapping
$ f\mapsto [h\mapsto f(\cdot)h]$
defines an isomorphism of Banach spaces
\beq\label{eq:fubini}
 L^p(A;\g(H,X))\eqsim \g(H,L^p(A;X)).
\eeq 
This follows from a
simple application of the Kahane-Khintchine inequality; we refer to  
\cite[Proposition 2.6]{NVW} for the details. Here, $H$ and $X$ are allowed
to be arbitrary Hilbert spaces and Banach spaces, respectively; the norm constants in the isomorphism are independent of $H$.\\

Let $\g = (\g_n)_{n\ge 1}$ be a sequence of independent standard normal
variables on a probability space $(\O,\F,\P)$. 
Recall that a Banach space $X$ is called {\em $K$-convex} if the mapping
$$\pi_\g: f \mapsto \sumn \g_n \E (\g_n f), \quad f\in L^2(\O;X),$$
defines a bounded operator on $L^2(\O;X)$. This notion is well-defined:
if $\pi_\g$ is bounded for some sequence $\g$, then it is
bounded for all sequences $\g$. A celebrated result of Pisier
\cite{Pi} states that $X$ is $K$-convex if and only if $X$ is $B$-convex if and
only if $X$ has nontrivial type.

If $X$ is $K$-convex, then the isometry
$ I_\g: \g(H,X) \to L^2(\O;X)$ defined by
$$ I_\g R := \sumn \g_n Rh_n$$
maps $\g(H,X)$ onto a complemented subspace of $L^2(\O;X)$.
Indeed, for all $R\in\g(H,X)$
we have
$$ \pi_\g I_\g R = \sumn \g_n \E \g_n\sumj \g_j Rh_j = \sumn \g_n Rh_n = I_\g
R.$$
Hence, the range of $I_\g$ is contained in the range of $\pi_\g$.
Since the range of $\pi_\g$ is spanned by the functions $\g_n \otimes \xi
= I_\g (h_n\otimes \xi)$, the range is $\pi_\g$ is contained in the range of
$I_\g$. We conclude that the ranges of $\pi_\g$ and $I_\g$ coincide and the
claim is proved. As an application of this we are able to describe complex
interpolation spaces of the spaces $\g(H,X)$.

\begin{proposition}\label{prop:interpolation}
 If $X_1$ and $X_2$ are $K$-convex, then for all $0<\theta<1$ we have
$$ [\g(H,X_1), \g(H,X_2)]_\theta = \g(H,[X_1,X_2]_\theta) \quad\hbox{with
equivalent norms}.$$  
\end{proposition}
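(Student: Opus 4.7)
The plan is to exploit the fact, established in the discussion preceding the proposition, that whenever $X$ is $K$-convex the map $I_\g : \g(H,X) \to L^2(\O;X)$ is an isomorphism onto the range of the bounded projection $\pi_\g$ on $L^2(\O;X)$. This realises $\g(H,X)$ as a complemented subspace of $L^2(\O;X)$ in a uniform way, and reduces the problem to the interpolation of complemented subspaces of Bochner spaces.

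First, I would recall the classical interpolation identity
$$ [L^2(\O;X_1), L^2(\O;X_2)]_\theta = L^2(\O;[X_1,X_2]_\theta) $$
with equivalent norms, valid for any compatible couple $(X_1,X_2)$. Second, since $\pi_\g$ is bounded on $L^2(\O;X_j)$ for $j=1,2$ and is given by the \emph{same formula} $f\mapsto \sumn \g_n \E(\g_n f)$ on both spaces, the complex interpolation theorem for operators extends $\pi_\g$ to a bounded projection on $L^2(\O;[X_1,X_2]_\theta)$; in particular $[X_1,X_2]_\theta$ is itself $K$-convex, so the isometric embedding $I_\g:\g(H,[X_1,X_2]_\theta)\to L^2(\O;[X_1,X_2]_\theta)$ is available and its range coincides with that of $\pi_\g$. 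Third, I would invoke the standard principle that interpolation commutes with a common bounded projection: if $P$ is bounded on both members of a compatible couple $(E_1, E_2)$, then $[PE_1, PE_2]_\theta = P[E_1,E_2]_\theta$ with equivalent norms (proved by splitting $x = Px + (I-P)x$ along the retraction/coretraction pair).

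Combining these three ingredients, I chain the identifications
$$ [\g(H,X_1), \g(H,X_2)]_\theta \simeq [\pi_\g L^2(\O;X_1), \pi_\g L^2(\O;X_2)]_\theta = \pi_\g L^2(\O;[X_1,X_2]_\theta) \simeq \g(H,[X_1,X_2]_\theta), $$
where the outer equivalences come from $I_\g$ and the middle equality uses the second and third steps above. The main conceptual point, and the one most tied to the $K$-convexity hypothesis, is the interpolation of $\pi_\g$: it simultaneously produces the projection on $L^2(\O;[X_1,X_2]_\theta)$ and establishes the $K$-convexity of $[X_1,X_2]_\theta$ needed to make sense of the right-hand side. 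The remainder is bookkeeping with standard interpolation tools.
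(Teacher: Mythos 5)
Your proof is correct and follows exactly the route the paper takes: it realises $\g(H,X_j)$ as the complemented subspace $\pi_\g L^2(\O;X_j)$ via $I_\g$ and invokes the retraction/coretraction principle for interpolation of complemented subspaces together with the standard identity $[L^2(\O;X_1),L^2(\O;X_2)]_\theta = L^2(\O;[X_1,X_2]_\theta)$. The paper compresses all of this into a citation of Bergh--L\"ofstr\"om, Chapter~5, so you have simply spelled out the details (including the useful remark that interpolating $\pi_\g$ also yields the $K$-convexity of $[X_1,X_2]_\theta$) that the paper leaves implicit.
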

\begin{proof}
In view of the preceding observations this follows from 
general results on interpolation of complemented subspaces 
\cite[Chapter 5]{BL}.
\end{proof}

\section{Main estimate}
\label{sect:mainest}
The main estimate of this paper is a $\g$-boundedness estimate for some averaging operators, which is proven below.

We start by recalling some known results.
The first is Bourgain's extension to UMD spaces of Stein's inequality \cite{Bou} 
(see \cite{CPSW} for a complete proof). 

\begin{lemma}\label{lem:stein} Let $1<p<\infty$ and let $X$ be a UMD space.
Let $(\F_m)_{m\in\Z}$ be a filtration on a probability space $(\O,\F,\P)$.
Then the family of conditional
expectations $$\mathscr{E} = \{\E(\,\cdot\,|\F_m): \ m\in\Z\}$$ is 
$\g$-bounded on $L^p(\O;X)$.
 \end{lemma}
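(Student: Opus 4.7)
The plan is to reduce the $\g$-boundedness of the family of conditional expectations to the UMD property of $X$ via a projection argument on an enlarged probability space. Since UMD spaces have finite cotype, $\g$-boundedness and $R$-boundedness coincide (with comparable constants), so it suffices to establish the Rademacher version of~\eqref{eq:g-bdd}. Given $T_j=\E(\cdot\,|\,\F_{m_j})$ and $\xi_j\in L^p(\O;X)$, the permutation invariance of the Rademacher sequence lets me reorder the indices so that $m_1\le\dots\le m_k$; I relabel $\F_j:=\F_{m_j}$ for brevity.

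Next, I pass to the product space $\O\times\O_\e$, equipped with the two filtrations
\[
  \widetilde{\F}_n:=\F\otimes\sigma(\e_1,\dots,\e_n),
  \qquad
  \G_n:=\F_n\otimes\sigma(\e_1,\dots,\e_n),
\]
the latter being coarser. Using the independence of each $\e_n$ from $\F$ and from the previous Rademachers, one checks directly that $d_n:=\e_n \xi_n$ is an $\widetilde{\F}_n$-martingale-difference sequence and that
\[
  e_n:=\e_n\E(\xi_n\,|\,\F_n)=\E(d_n\,|\,\G_n)
\]
is a $\G_n$-martingale-difference sequence. The conclusion of the lemma thus becomes the \emph{conditional projection bound}
\[
  \Big\|\sum_n \E(d_n\,|\,\G_n)\Big\|_{L^p(\O\times\O_\e;X)}
  \le C \Big\|\sum_n d_n\Big\|_{L^p(\O\times\O_\e;X)},
\]
valid for every $X$-valued $\widetilde{\F}_n$-martingale-difference sequence $(d_n)$ and every sub-filtration $\G_n\subset\widetilde{\F}_n$.

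The final step is to derive this projection bound from UMD. The UMD hypothesis directly yields the predictable-transform inequality $\|\sum a_n d_n\|_{L^p(X)}\le \beta_{p,X}\|\sum d_n\|_{L^p(X)}$ for any $\widetilde{\F}_{n-1}$-measurable scalars with $|a_n|\le 1$. The projection $\E(\cdot\,|\,\G_n)$ is not itself such a transform, so one must realise the conditional expectation as an averaged combination of elementary predictable transforms---for example by approximating $\G_n$ by finite atomic refinements, expressing $\E(\cdot\,|\,\G_n)$ on each step as a convex combination of $\pm 1$-valued $\widetilde{\F}_{n-1}$-measurable transforms and passing to the limit, or by dualising against $L^{p'}(\O;X\s)$ (which is also UMD). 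Combining the resulting UMD bounds closes the argument. The hard part of the plan is precisely this last passage: bridging from the $\pm 1$ predictable-transform form of UMD to a genuine conditional projection between two filtrations. In the cited references, Bourgain sidesteps it by invoking the equivalence of UMD with the $L^p(\O;X)$-boundedness of the Hilbert transform, whereas Cl\'ement--de Pagter--Sukochev--Witvliet proceed directly via a careful joint decomposition of the martingale differences along the two filtrations.
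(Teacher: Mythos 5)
The paper does not prove this lemma; it quotes it as a known result, citing Bourgain~\cite{Bou} and referring to~\cite{CPSW} for a complete proof. So there is no in-paper argument to compare against, and your proposal has to be judged on its own.

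Your reduction is the standard one and is correct: replace $\gamma$-boundedness by $R$-boundedness (UMD implies finite cotype), reorder so that $m_1\leq\dots\leq m_k$, pass to the product space $\Omega\times\Omega_\varepsilon$ with the two filtrations $\widetilde{\F}_n=\F\otimes\sigma(\e_1,\dots,\e_n)$ and $\G_n=\F_n\otimes\sigma(\e_1,\dots,\e_n)$, and observe that $d_n=\e_n\xi_n$ is an $\widetilde\F$-martingale difference sequence whose $\G_n$-conditional expectation is $\e_n\E(\xi_n\mid\F_n)$. This reformulates the assertion as a projection bound between two filtrations. All of this is fine.

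The gap is in the final step, and it is substantive. You propose to obtain the projection bound by ``expressing $\E(\cdot\mid\G_n)$ on each step as a convex combination of $\pm1$-valued $\widetilde\F_{n-1}$-measurable transforms.'' This cannot work: a predictable $\pm1$-transform (or any pointwise multiplier) acts fiberwise and so preserves the conditional distribution of $d_n$ given $\widetilde\F_{n-1}$, whereas $\E(d_n\mid\G_n)$ genuinely averages $d_n$ across fibers and produces a function with strictly smaller conditional variance. Already for a two-point space with $\G$ trivial, $\E(f\mid\G)=\tfrac12(f(1)+f(2))$ is not in the convex hull of $\{\pm f\}$, so no such representation exists, even approximately via refinements. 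Your other suggestion, dualising, does not by itself help either: the adjoint of a conditional expectation is again a conditional expectation, so duality merely exchanges $(p,X)$ for $(p',X^*)$ and returns an instance of the same inequality. In short, passing from the martingale-transform formulation of UMD to the Stein projection bound requires a genuinely different mechanism (decoupling, or an interleaved/tangent-sequence decomposition as in~\cite{CPSW}), and that mechanism is exactly what is missing from the sketch. As written, the proposal reduces the lemma to an equivalent reformulation and then asserts, without a valid bridge, that UMD implies it.
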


Let us agree that a cube in $\R^n$ is any set $Q$ of the form $x+[0,\ell)^n$ with $x\in\R^n$ and $\ell>0$. We denote $\ell(Q):=\ell$ and call it the side-length of $Q$. A \emph{system of dyadic cubes} is a collection $\Delta=\bigcup_{k\in\Z}\Delta_{2^k}$, where $\Delta_{2^k}$ is a disjoint cover of $\R^n$ by cubes of side-length $2^k$, and each $Q\in\Delta_{2^k}$ is the union of $2^n$ cubes $R\in\Delta_{2^{k-1}}$. We recall the following geometric 
lemma of Mei~\cite{Mei}:

\begin{lemma}
There exist $n+1$ systems of dyadic cubes $\Delta^0,\ldots,\Delta^n$ and a constant $C<\infty$ such that for any ball $B\subset\R^n$ there is a $Q\in\bigcup_{k=0}^n\Delta^k$ which satisfies $B\subset Q$ and $\abs{Q}\leq C\abs{B}$.
\end{lemma}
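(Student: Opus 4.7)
The plan is to construct the $n+1$ systems as translates of the standard dyadic grid and to verify the covering property by a pigeonhole argument over the coordinate directions. Set $\Delta^0$ to be the standard dyadic system and, for $l=1,\ldots,n$, let $\Delta^l = \Delta^0 + \alpha_l$ for suitable translation vectors $\alpha_l \in \R^n$, with $\alpha_0 := 0$.

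Given a ball $B = B(x,r) \subset \R^n$, I fix the dyadic scale $\ell = 2^k$ with $3(n+1)r \le \ell < 6(n+1)r$. For each $l \in \{0,1,\ldots,n\}$, the unique cube $Q^l \in \Delta^l_\ell$ containing $x$ places $x$ at interior coordinates $\xi^{l,i} := (x^i - \alpha_l^i) \bmod \ell \in [0,\ell)$. The cube $Q^l$ contains $B(x,r)$ if and only if $\xi^{l,i} \in [r,\ell-r]$ for every coordinate $i$, and when this holds $|Q^l| = \ell^n \le (6(n+1))^n r^n \le C_n |B|$ for a dimensional constant $C_n$, which is the desired comparability.

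The key is to choose the shifts so that, at every dyadic scale $\ell = 2^k$ and in every coordinate direction $i$, the $n+1$ residues $\alpha_l^i \bmod \ell$ (for $l=0,\ldots,n$) are pairwise at distance strictly greater than $2r$ in the circle $\R/\ell\Z$. Under this condition, for each $i$ at most one shift $l$ can place $\xi^{l,i}$ within $r$ of $0$ or $\ell$; aggregating over the $n$ coordinate directions, at most $n$ of the $n+1$ available shifts can be \emph{bad} in some direction, so at least one $l$ is \emph{good} in every direction, yielding the desired cube.

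The main obstacle is constructing shifts with the above scale-invariant separation property. A naive choice such as $\alpha_l = \frac{l}{n+1}(1,\ldots,1)$ achieves the separation at one scale only, since the residues $\alpha_l^i \bmod 2^k$ behave erratically as $k$ varies. One remedies this by designing the $\alpha_l$ via carefully chosen binary expansions that extend Okikiolu's classical one-dimensional one-third trick to $n+1$ shifts in dimension $n$. Alternatively, one can allow the shifts to depend on the scale $k$ subject to the nesting constraint $\alpha_l^{(k-1)} \equiv \alpha_l^{(k)} \pmod{2^{k-1}}$; the $2^n$ choices available at each refinement step provide enough flexibility to maintain the required separation at every dyadic scale.
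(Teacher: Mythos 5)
Your skeleton --- $n+1$ dyadic systems, the scale selection $3(n+1)r\le\ell<6(n+1)r$, and the pigeonhole over the $n$ coordinate directions to find at least one good system --- is the right framework, and it is how Mei's lemma is proved; the paper itself only cites Mei~\cite{Mei} and gives no proof, so there is nothing on that side to compare against. But your argument has a genuine gap precisely where the nontrivial content lies: you never actually construct the shifts, and you acknowledge as much.

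Two concrete problems. First, you posit at the outset that the extra systems are literal translates $\Delta^l=\Delta^0+\alpha_l$ by fixed vectors $\alpha_l\in\R^n$; this cannot work. Once $2^k>\alpha_l^i\ge 0$, the residue $\alpha_l^i\bmod 2^k$ equals $\alpha_l^i$ itself, so $(\alpha_l^i\bmod 2^k)/2^k\to 0$ as $k\to\infty$, and the separation (which must be measured relative to the scale $\ell=2^k$, since $r$ is comparable to $\ell$) collapses at large scales. Even the one-dimensional one-third trick you invoke is \emph{not} a fixed translate: it is a generalized dyadic system whose shift $x_k$ at scale $2^k$ obeys the nesting relation $x_{k-1}\equiv x_k\pmod{2^{k-1}}$ while $x_k/2^k\in\{1/3,2/3\}$ for every $k$, so $x_k\to\infty$. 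Second, your alternative remedy correctly identifies this nesting constraint but then asserts, without proof, that the $2^n$ refinement choices ``provide enough flexibility to maintain the required separation at every dyadic scale.'' That is exactly the claim that needs an argument, and it is not self-evident: the naive choice of leaving the residue unchanged when passing from scale $2^k$ to $2^{k+1}$ halves the relative separation at every step and degrades to nothing. What is missing is an explicit choice of $n+1$ compatible shift sequences such that, for every $k\in\Z$ and every coordinate $i$, the $n+1$ residues modulo $2^k$ are pairwise separated by a fixed positive fraction of $2^k$ (on the order of $2^k/(n+1)$); producing and verifying such sequences is the actual substance of Mei's lemma, and without it the proof is incomplete.
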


The following results can be found in \cite{HMP}:

\begin{lemma}\label{lem:Figiel}
Let $X$ be a UMD space and $1<p<\infty$. Let $r\in\Z^n\setminus\{0\}$ and $x_Q\in X$ for all $Q\in\Delta$. Then
\[
  \E\Big\|\sum_{k\in\Z}\e_k\sum_{Q\in\Delta_{2^k}} 1_{Q+r\ell(Q)} x_Q\Big\|_p
  \leq C(1+\log\abs{r})\E\Big\|\sum_{k\in\Z}\e_k\sum_{Q\in\Delta_{2^k}} 1_{Q} x_Q\Big\|_p.
\]
\end{lemma}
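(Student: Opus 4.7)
The strategy is to decompose the shift $Q\mapsto Q+r\ell(Q)$ into a bounded sequence of elementary martingale moves, following Figiel's technique adapted to UMD-valued Rademacher sums. Set $J:=\lceil\log_2(1+\abs{r})\rceil$, so that for each $Q\in\Delta_{2^k}$ both $Q$ and $Q+r\ell(Q)$ are contained in a common ``ancestor'' $R\in\Delta_{2^{k+J}}$ (or in a bounded number of adjacent such ancestors). The shift is then implemented by climbing $J$ levels up in the dyadic tree and coming back down, at each level performing swaps between sibling cubes that share the same parent.

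For a single level $m$, the elementary operation sends $\sum_{P\in\Delta_{2^m}}1_P x_P$ to $\sum_{P\in\Delta_{2^m}}1_{P'(P)}x_P$, where $P'(P)$ is a prescribed neighbor of $P$ sharing a common parent in $\Delta_{2^{m+1}}$. Writing
\[
  1_{P'(P)}=\tfrac12\bigl(1_{P'(P)}+1_P\bigr)+\tfrac12\bigl(1_{P'(P)}-1_P\bigr)
\]
exhibits this map as the sum of a conditional-expectation piece (the first term is the average of $1_{P\cup P'(P)}$ against the parent filtration $\F_{m+1}$) and a Haar-type martingale-difference piece (the second term). The UMD property of $X$ gives the boundedness of the martingale-difference transform on $L^p(\R^n;X)$ with a constant independent of scale, while Lemma \ref{lem:stein} provides the $\g$-boundedness of the family of conditional expectations, which is exactly what is needed to commute them past the Rademacher sum $\sum_k\e_k(\,\cdot\,)$ with a universal constant.

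Iterating the elementary operation across the $J\leq C(1+\log\abs{r})$ levels between $k$ and $k+J$ accumulates a constant factor per level, yielding the claimed bound after using Kahane--Khintchine to pass between the exponents $p$ and $2$ in the Rademacher sum. The main obstacle is the bookkeeping: for a given $r\in\Z^n$ and each level $m\in\{k,\dots,k+J\}$, one must identify the finite collection of sibling-swap patterns which, when composed over $m$, transport every $Q\in\Delta_{2^k}$ onto $Q+r\ell(Q)$. Only $O(1)$ patterns per level are required (corresponding, roughly, to the binary digits of the coordinates of $r$), and the number of levels $J$ enters multiplicatively through the iteration, producing the logarithmic prefactor $1+\log\abs{r}$.
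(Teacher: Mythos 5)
The paper does not prove this lemma; it cites it directly from \cite{HMP}, so there is no in-paper argument to compare against. Your proposal, however, attempts a proof in the spirit of Figiel's martingale technique, which is indeed the right circle of ideas, so it is worth scrutinising on its own terms.

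There is a genuine gap in the accounting of constants. You describe the shift as being ``implemented by climbing $J$ levels up in the dyadic tree and coming back down, at each level performing swaps between sibling cubes,'' and you say this ``accumulates a constant factor per level.'' As described, this is a \emph{composition} of $J\eqsim\log\abs{r}$ operators, each bounded with some constant $C>1$ (the constant cannot be $1$, since it involves UMD and Stein constants). A composition of $J$ operators of norm $\le C$ is bounded by $C^{J}\eqsim\abs{r}^{\log C}$, which is polynomial in $\abs{r}$, not logarithmic. Your closing sentence says ``the number of levels $J$ enters multiplicatively,'' giving $CJ$, but that is the bound for an \emph{additive} decomposition $\sum_{j=1}^{J}S_j$ with $\|S_j\|\le C$, and you never produce such a decomposition.

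What the Figiel argument actually does is \emph{telescope}, not \emph{compose}. Writing $T_r$ for the shift, the key point is that the Haar expansions of $1_Q$ and $1_{Q+r\ell(Q)}$ agree at all scales coarser than $k+J$, so $T_r$ equals a conditional-expectation piece (controlled uniformly by Lemma~\ref{lem:stein}) \emph{plus} a sum of at most $O(\log\abs{r})$ shifted martingale-difference operators, one per scale in $\{k,\dots,k+J\}$, each of which is a UMD martingale transform with a \emph{uniform} bound independent of $j$ and $r$. The bound is then $1 + J\cdot C_{\mathrm{UMD}}\eqsim 1+\log\abs{r}$. Your local observation — the split $1_{P'}=\tfrac12(1_{P'}+1_P)+\tfrac12(1_{P'}-1_P)$ into conditional-expectation and Haar pieces — is exactly the right ingredient for bounding one such term, but you need to sum these contributions across levels, not iterate them. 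Without an explicit telescoping decomposition that exhibits $T_r$ as an $O(\log\abs{r})$-term sum of uniformly bounded operators, the proof does not yield the logarithmic (rather than polynomial) growth.
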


\begin{lemma}\label{lem:jump}
Let $X$ be a UMD space, $1<p<\infty$, and $m\in\Z_+$. For each $Q\in\Delta$, let $Q',Q''\in\Delta$ be subcubes of $Q$ of side-length $2^{-m}Q$. Then for all $\ell\in\Z$ and all $x_Q\in X$
\[
  \E\Big\|\sum_{k\equiv\ell}\e_k \sum_{Q\in\Delta_{2^k}}1_{Q''} x_Q\Big\|_p
  \leq C\E\Big\|\sum_{k\equiv\ell}\e_k \sum_{Q\in\Delta_{2^k}}1_{Q'} x_Q\Big\|_p,
\]
where $k\equiv\ell$ is short-hand for $k\equiv\ell\mod (m+1)$.
\end{lemma}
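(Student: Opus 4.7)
The plan is to realize the map from the right-hand side to the left-hand side as a composition of two operations, each bounded on Rademacher-randomised sums in $L^p(\R^n;X)$: (i) multiplication by a $\{0,1\}$-valued function, handled by Kahane's contraction principle; and (ii) a dyadic conditional expectation, handled by Lemma~\ref{lem:stein}.

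Write $\F_k := \sigma(\Delta_{2^k})$, and set
\[
  F_k := \sum_{Q\in\Delta_{2^k}} 1_Q\, x_Q,\qquad U_k := \bigsqcup_{Q\in\Delta_{2^k}} Q',\qquad V_k := \bigsqcup_{Q\in\Delta_{2^k}} Q'',
\]
so that $F_k$ is $\F_k$-measurable and the inner sums on the right and left of the inequality become $\phi_k := 1_{U_k}F_k$ and $\psi_k := 1_{V_k}F_k$, respectively. Since each $Q'\subset Q\in\F_k$ has $|Q'|/|Q| = 2^{-mn}$, one has $\E(1_{U_k}|\F_k)\equiv 2^{-mn}$, and hence, using the $\F_k$-measurability of $F_k$,
\[
  \E(\phi_k|\F_k) = 2^{-mn} F_k, \qquad \psi_k = 2^{mn}\, 1_{V_k}\, \E(\phi_k|\F_k).
\]

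Substituting this identity and applying first Kahane's contraction principle pointwise in $x\in\R^n$ (the multipliers $1_{V_k}(x)$ take values in $\{0,1\}$) and then Lemma~\ref{lem:stein} (applied in its standard extension from probability spaces to the $\sigma$-finite dyadic filtration on $\R^n$), we obtain
\[
  \E\Big\|\sum_{k\equiv\ell}\e_k\psi_k\Big\|_p \le 2^{mn}\, \E\Big\|\sum_{k\equiv\ell}\e_k\, \E(\phi_k|\F_k)\Big\|_p \le 2^{mn}\, C(X,p)\, \E\Big\|\sum_{k\equiv\ell}\e_k\phi_k\Big\|_p,
\]
which is the claim with $C = C(X,p)\cdot 2^{mn}$.

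No serious obstacle is anticipated: the only conceptual ingredient is the identity $\psi_k = 2^{mn} 1_{V_k}\E(\phi_k|\F_k)$, which produces a dyadic conditional expectation whose image is precisely the $\F_k$-piecewise-constant envelope $F_k$ of $\phi_k$, after which the two standard UMD-tools finish the job. The residue-class restriction $k\equiv\ell\pmod{m+1}$ is not used in the argument above, so this plan in fact yields the inequality for the full sum over $k\in\Z$; the restriction presumably reflects the intended downstream use (where it ensures that the supports $V_k$ live at well-separated scales) rather than the proof strategy.
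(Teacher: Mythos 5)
Your algebraic identity $\psi_k = 2^{mn}\,1_{V_k}\,\E(\phi_k\,|\,\F_k)$ is correct, and the two steps (pointwise contraction principle, then Stein's inequality) are each legitimate. But the chain costs you a factor $2^{mn}$, so what you actually prove is the inequality with constant $C(X,p)\cdot 2^{mn}$, and this $m$-dependence is fatal. Look at how the lemma is used in the proof of Proposition~\ref{prop:averaging}: there $M = 2^{nm}\eqsim\alpha^n$, and the lemma is applied to each of the $M$ subcubes $Q_{ji}$ \emph{with a constant $C$ independent of $m$}, so that the only growth in $m$ comes from $M^{1/\tau}\eqsim\alpha^{n/\tau}$, giving the asserted $\gamma$-bound $C(1+\log\alpha)\alpha^{n/\tau}$. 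If the jump constant were instead $\eqsim 2^{mn}\eqsim\alpha^n$, the final bound would degrade to $\alpha^{n+n/\tau}$, and this extra $\alpha^n$ would propagate through Theorem~\ref{thm:angle} into the Schur-type estimates of Section~\ref{sect:od}, where the summability of the series $\sum_k 2^{-kM}(1+k)2^{kn/\tau}$ would then require off-diagonal decay of order $M > n + n/\tau$ instead of $M > n/\tau$. So the constant in Lemma~\ref{lem:jump} is required (implicitly, but essentially) to be independent of $m$, and your argument cannot deliver that.

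The source of the loss is structural, not a matter of tightening a step: conditioning on $\F_k$ ``spreads'' the mass of $\phi_k$ over the full cube $Q$, producing $2^{-mn}F_k$, and Stein's inequality compares this spread-out function against the concentrated $\phi_k$ --- that comparison genuinely costs $2^{mn}$, and multiplying by $1_{V_k}$ afterwards cannot recover it because contractions only decrease norms. You yourself observed that your argument never uses the residue-class restriction $k\equiv\ell\pmod{m+1}$; that should have been a warning sign rather than a reassurance. The restriction is not cosmetic and is not for downstream bookkeeping: it guarantees that the scale ranges $[k-m,k]$ for distinct $k\equiv\ell$ are pairwise disjoint blocks of $\Z$, which is exactly what allows a martingale-transform argument to move $1_{Q'}$ to $1_{Q''}$ \emph{within} $Q$ (staying at scales between $2^{k-m}$ and $2^k$) without ever averaging up to the whole cube $Q$ and hence without incurring the $2^{mn}$ loss. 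In short, your proposal proves a true but strictly weaker statement; the lemma as actually needed requires an argument that exploits the residue restriction, and that is what the cited source (\cite{HMP}) does.
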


The previous lemmas will now be used to prove our main estimate.

\begin{proposition}\label{prop:averaging}
Let $X$ be a UMD space, $1<p<\infty$, and let $L^p(X)$ have type $\tau$. For $\alpha\geq 1$, let $\mathscr{A}_{\alpha}$ be the family of operators
\begin{equation*}
  f\mapsto A^{\alpha}_B f:=1_{\alpha B}\intav_B f \ud x,
\end{equation*}
where $B$ runs over all balls in $\R^n$. Then $\mathscr{A}_{\alpha}$ is $\gamma$-bounded on $L^p(X)$ with the $\gamma$-bound at most $C(1+\log\alpha)\alpha^{n/\tau}$ and $C$ depends only on $X$, $p$, $\tau$ and $n$.
\end{proposition}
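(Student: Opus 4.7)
The plan is to combine Mei's geometric lemma with a decomposition into dyadic translates, exploiting both Figiel's translation estimate (Lemma \ref{lem:Figiel}) and the type of $L^p(X)$ via an auxiliary Rademacher sequence.

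First, I would apply Mei's lemma to reduce to dyadic cubes: for each ball $B$ there is a cube $Q(B)$ in one of the $n+1$ dyadic systems with $B \subset Q(B)$ and $|Q(B)| \leq C|B|$. Partitioning the family of balls according to which system contains $Q(B)$ costs only a dimensional factor, so we may assume a single system $\Delta$ and write $Q = Q(B) \in \Delta$. Since $\ell(Q) \sim r(B)$, the dilate $\alpha B$ lies in $C\alpha Q$, which is a disjoint union $\bigsqcup_{r \in S(\alpha)}(Q + r\ell(Q))$ of dyadic translates with $|S(\alpha)| \lesssim \alpha^n$ and $|r| \leq C\alpha$. This yields $A^\alpha_B f = \sum_{r \in S(\alpha)} T_{B, r} f$ with $T_{B, r} f := 1_{(Q + r\ell(Q)) \cap \alpha B} \intav_B f$, and the summands $\{T_{B, r} f\}_r$ have pairwise disjoint supports for each fixed $B$.

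This disjoint-support property allows the insertion of a second independent Rademacher sequence $(\e'_r)$ at no cost. Pointwise, the doubly-randomised sum reduces to $\sum_{j \in J(x)} \e_j \e'_{r_0(j, x)} \intav_{B_j} f_j$, where $J(x) = \{j : x \in \alpha B_j\}$ and $r_0(j, x)$ is the unique $r$ with $x \in (Q_j + r\ell(Q_j)) \cap \alpha B_j$; since the distribution of $(\e_j)$ is invariant under multiplication by fixed signs, the $L^p(X)$-$p$-th moment is unchanged, giving
\[
  \E_\e \Big\| \sum_j \e_j A^\alpha_{B_j} f_j \Big\|_{L^p(X)}^p
  = \E_{\e, \e'} \Big\| \sum_{j, r} \e_j \e'_r T_{B_j, r} f_j \Big\|_{L^p(X)}^p.
\]
Applying the type $\tau$ inequality of $L^p(X)$ to the $\e'$-sum (for fixed $\e$), combined with Kahane's inequality to switch between $p$-th and first moments, then produces the factor $|S(\alpha)|^{1/\tau} \lesssim \alpha^{n/\tau}$.

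For each fixed $r \in S(\alpha)$, I would bound $\E_\e\|\sum_j \e_j T_{B_j, r} f_j\|_{L^p(X)}$ by $C(1 + \log|r|)\E_\e\|\sum_j \e_j f_j\|_{L^p(X)}$. The identity $\intav_{B_j} f_j = \tfrac{|Q_j|}{|B_j|}\intav_{Q_j}(1_{B_j} f_j)$, with $|Q_j|/|B_j| \leq C$, together with the contraction principle (replacing $1_{(Q_j + r\ell(Q_j)) \cap \alpha B_j}$ by $1_{Q_j + r\ell(Q_j)}$ and relating $1_{B_j} f_j$ to $f_j$) reduces matters to bounding $\E_\e\|\sum_j \e_j 1_{Q_j + r\ell(Q_j)} \intav_{Q_j} f_j\|$. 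A symmetrisation introducing per-scale Rademachers (valid because each scale's contribution is an independent symmetric Rademacher sum in $\e$) then makes Lemma \ref{lem:Figiel} applicable, yielding the factor $1 + \log|r| \leq 1 + \log\alpha$; the resulting conditional-expectation expression is controlled by Lemma \ref{lem:stein}, with Lemma \ref{lem:jump} entering in auxiliary adjustments between dyadic subcubes if needed.

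The main obstacle is the careful verification of the doubly-randomised identity at the level of $p$-th moments and the orchestration of Kahane's inequality across the two independent Rademacher sequences, ensuring that the type-$\tau$ inequality extracts the factor $\alpha^{n/\tau}$ rather than the naive $\alpha^n$ coming from the triangle inequality. The final conversion from Rademacher to Gaussian $\g$-boundedness is automatic since UMD spaces have finite cotype.
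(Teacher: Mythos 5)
Your proposal is correct and gives the same bound, but organizes the decomposition in a genuinely different way from the paper, which is worth comparing.

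Both proofs start with Mei's lemma to replace each $B_j$ by a dyadic $Q_j\supset B_j$ of comparable volume, both use the contraction principle together with $\intav_{B_j}f_j=\tfrac{|Q_j|}{|B_j|}\intav_{Q_j}(1_{B_j}f_j)$, and both conclude with Stein's inequality applied to the conditional expectations $g\mapsto 1_{Q_j}\intav_{Q_j}g$. The divergence is in how $\alpha B_j$ is decomposed and where each factor of the final estimate originates. The paper \emph{inflates}: it replaces $Q_j$ by its dyadic ancestor $Q_j^*$ of side $2^m\ell(Q_j)$ with $2^{m-1}\leq\alpha<2^m$, so that $\alpha B_j$ fits inside $Q_j^*$ and its immediate neighbours, i.e.\ translates $Q_j^*+r\ell(Q_j^*)$ with $|r|\leq n$. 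Figiel's lemma (Lemma~\ref{lem:Figiel}) is then applied with a \emph{dimensional} translation parameter and costs only a constant. The $\alpha^{n/\tau}$ arises from type applied across the $M=2^{nm}$ subcubes of $Q_j^*$ of scale $\ell(Q_j)$, and the $(1+\log\alpha)$ comes from splitting the scales into $m+1$ residue classes modulo $m+1$ so that Lemma~\ref{lem:jump} can be used to move from an arbitrary subcube $Q_{ji}$ back to $Q_j=Q_{j1}$. You instead \emph{translate at the base scale}: you cover $\alpha B_j$ by $\lesssim\alpha^n$ translates $Q_j+r\ell(Q_j)$ with $|r|\lesssim\alpha$, introduce a second Rademacher sequence $(\e'_r)$ exploiting disjointness in $r$ (your doubly-randomised identity, which holds by Fubini and conditioning on $\e'$), and apply type across the $\lesssim\alpha^n$ translates. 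Here Figiel's lemma is applied with $|r|\lesssim\alpha$, so it is Figiel that supplies the $(1+\log\alpha)$ factor, and type supplies $|S(\alpha)|^{1/\tau}\lesssim\alpha^{n/\tau}$. In the clean version of your argument Lemma~\ref{lem:jump} and the modular splitting are not needed at all --- after Figiel brings you back to $1_{Q_j}\intav_{Q_j}g_j$ you are immediately in the Stein regime --- so your hedge about ``auxiliary adjustments'' can simply be dropped. Both routes require the same standard but unstated bookkeeping: the randomisation by per-scale signs $\e''_{k(j)}$ to bring Lemma~\ref{lem:Figiel} (which has one Rademacher per scale) into play, and the use of $p$-th moments plus Kahane--Khintchine so that the type inequality for $L^p(\O;L^p(X))$ passes the factor $\alpha^{n/\tau}$ through the extra expectation. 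Your approach trades one geometric lemma (Lemma~\ref{lem:jump}) for a slightly larger translation parameter in Figiel's lemma; the bound is the same, and the argument is, if anything, a little shorter.
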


\begin{proof}
We have to show that
\[
  \E\Big\|\sum_{j=1}^k\e_j 1_{\alpha B_j}\intav_{B_j} f_j\ud x\Big\|_p
  \leq C\E\Big\|\sum_{j=1}^k\e_j f_j\Big\|_p.
\]
By splitting all the balls $B_j$ into $n+1$ subsets and considering each of them separately, we may assume by Mei's lemma that there is a system of dyadic cubes $\Delta$ and $Q_1,\ldots,Q_k\in\Delta$ such that $B_j\subset Q_j$ and $\abs{Q_j}\leq C\abs{B_j}$.

Let $m$ be the integer for which $2^{m-1}\leq\alpha<2^m$. Let $Q_j^*\in\Delta$ be the unique cube in the dyadic system which has side-length $2^m\ell(Q_j)$ and contains $Q_j$. Then $\alpha B_j$ is contained in the union of $Q_j^*$ and at most $2^n-1$ of adjacent cubes $R\in\Delta$ of the same size. Writing $g_j=1_{B_j} f_j$, we observe that
\[
  \intav_{B_j} f_j\ud x=\frac{\abs{Q_j}}{\abs{B_j}}\intav_{Q_j} g_j\ud x.
\]
Since $\abs{Q_j}/\abs{B_j}\leq C$, by the contraction principle it suffices to show that
\[
  \E\Big\|\sum_{j=1}^k\e_j 1_{R_j}\intav_{Q_j}g_j\ud x\Big\|_p
  \leq C\E\Big\|\sum_{j=1}^k\e_j g_j\Big\|_p,
\]
where $R_j=Q_j^*+r\ell(Q_j^*)$ for some $\abs{r}\leq n$. Thanks to Lemma~\ref{lem:Figiel}, it suffices to consider $r=0$.

We next write $Q_j^*$ as the union $\bigcup_{i=1}^{M}Q_{ji}$, where $Q_{ji}\in\Delta$ are the $M:=2^{nm}$ subcubes of $Q_j^*$ of side-length $\ell(Q_j)$. Let us fix the enumeration so that $Q_{j1}=Q_j$. Writing $x_j:=\intav_{Q_j} g_j\ud x$ for short, it follows that
\[\begin{split}
  \E\Big\|\sum_{j=1}^k\e_j 1_{Q_j^*} x_j\Big\|_p
  &=\E\Big\|\sum_{i=1}^M\sum_{j=1}^k\e_j 1_{Q_{ji}} x_j\Big\|_p
  \leq C\E'\E\Big\|\sum_{i=1}^M\e_i'\sum_{j=1}^k\e_j 1_{Q_{ji}} x_j\Big\|_p \\
  &\leq C\Big(\sum_{i=1}^M\E\Big\|\sum_{j=1}^k\e_j 1_{Q_{ji}} x_j\Big\|_p^{\tau}\Big)^{1/\tau}
\end{split}\]
where the first estimate follows from the Khintchine--Kahane inequality and the disjointness of the $Q_{ji}$ for each fixed $j$, and the second from the assumed type-$\tau$ property.

If we assume, for the moment, that all the side-lengths $2^{k(j)}:=\ell(Q_j)$ satisfy $k(j)\equiv k(j')\mod (m+1)$, we may apply Lemma~\ref{lem:jump} to continue the estimate with
\[\begin{split}
 \leq C\Big(\sum_{i=1}^M\E\Big\|\sum_{j=1}^k\e_j 1_{Q_j} x_j\Big\|_p^{\tau}\Big)^{1/\tau}
   &\leq C M^{1/\tau}\E\Big\|\sum_{j=1}^k\e_j 1_{Q_j}\intav_{Q_j} g_j\ud x\Big\|_p \\
  &\leq CM^{1/\tau}\E\Big\|\sum_{j=1}^k\e_j g_j\Big\|_p,
\end{split}\]
where the last estimate applied Stein's inequality, observing that the operators $g\mapsto 1_{Q_j}\intav_{Q_j}g \ud x$ are conditional expectations related to the dyadic filtration induced by $\Delta$. Since $M=2^{nm}\leq 2^n\alpha^n$, we obtain the assertion even without the logarithmic factor in this case.

In general, the above assumption may not be satisfied, but we can always split the indices $j$ into $m+1\leq c(1+\log\alpha)$ subsets which verify the assumption, and this concludes the proof.
\end{proof}

\begin{remark}
The proof simplies considerably in the important special case $\alpha=1$.
\end{remark}

\section{The vector-valued tent spaces $T^{p,2}(X)$}
\label{sect:tent}

In order to motivate our approach we begin with a simple characterisation of
tent spaces in the scalar case. 
We put $\R_+^{n+1} := \R^n\times \R_+$ and denote
$$\Gamma(x) = \{(y,t)\in \R_+^{n+1}: \ |x-y|<t\}.$$
Thus $(y,t)\in \Gamma(x) \Leftrightarrow y\in B(x,t)$,
where $B(x,t) = \{y\in \R^n: \ |x-y|<t\}$. 
We shall write $$L^p=L^p(\R^n), \qquad L^2(\frac{\ud y\, \ud t}{t^{n+1}}) = L^2\big(\R_+^{n+1},\frac{\ud y\, \ud t}{t^{n+1}}\big),$$ 
where $\ud y$ and $\ud t$ denote the Lebesgue measures on $\R^n$ and $\R_+$. 
Similar conventions will apply to their vector-valued analogues.  
The dimension $n\ge 1$ is considered
to be fixed. 

For $1\le p,q<\infty$, the  {\em tent space} $ T^{p,q}= T^{p,q}(\R_+^{n+1})$ 
consists of all (equivalence classes of) measurable functions 
$f:\R_+^{n+1} \to \C$ with the property that
$$\int_{\R^n}\Big(\int_{\Gamma(x)} |f(y,t)|^q
\frac{\ud y\ud t}{t^{n+1}}\Big)^\frac{p}{q} \ud x$$
is finite. With respect to the norm
$$ \n f\n_{T^{p,q}(\R_+^{n+1})} := \Big\n  \Big(\int_{\Gamma(\cdot)} |f(y,t)|^q
\frac{\ud y\ud t}{t^{n+1}}\Big)^\frac1q\Big\n_{L^p},$$
$T^{p,q}$ is a Banach space.
Tent spaces were introduced in the 1980's by Coifman, Meyer, and Stein
\cite{CMS}. Some of the principal results of that paper were simplified 
by Harboure, Torrea, and Viviani \cite{HTV}, who exploited the fact that 
\[
J: f\mapsto \big[x\mapsto [(y,t)\mapsto 1_{B(x,t)}(y)f(y,t)]\big]
\]
maps $T^{p,q}$ isometrically onto
a complemented subspace of $L^p(L^q(\frac{\ud y\ud t}{t^{n+1}}))$
for $1<p,q<\infty$.
 
We now take $q=2$, $H$ a Hilbert space, and extend the mapping $J$ to functions 
in $C_c(H)\otimes X$ by
$J(g\otimes \xi) := Jg \otimes \xi$ and linearity. Here, $C_c(H)$ denotes the 
space of $H$-valued continuous functions on $\R_+^{n+1}$ with compact support.
Note that by \eqref{eq:integral_operator}, $J(g\otimes \xi)$ defines an element of 
$L^p(\g(L^2(\frac{\ud y\ud t}{t^{n+1}};H),X))$ in a natural way.

\begin{definition} Let $1\leq  p<\infty$.
The {\em tent space} $T^{p,2}(H;X)$ is defined as
the completion of $C_c(H)\otimes X$ with respect to the norm
$$ \n f\n_{T^{p,2}(H;X)} 
:= \n J f\n_{L^p(\g(L^2(\frac{\ud y\ud t}{t^{n+1}};H),X))}.$$
$T^{p,2}(\C;X)$  will simply be denoted by $T^{p,2}(X)$.
\end{definition}

It is immediate from this definition that $J$ defines an isometry from
$T^{p,2}(H;X)$ onto a closed subspace of 
$L^p(\g(L^2(\tfrac{\ud y\ud t}{t^{n+1}};H),X))$. In what follows we shall
always identify $T^{p,2}(H;X)$ with its image in
$L^p(\g(L^2(\tfrac{\ud y\ud t}{t^{n+1}};H),X)$.

Using the identification $\g(L^2(\tfrac{\ud y\ud t}{t^{n+1}}),\C) = L^2(\tfrac{\ud y\,
dt}{t^{n+1}})$ we see that our definition extends the definition
of tent spaces in the scalar-valued case.

Our first objective is to prove that 
if $X$ is a UMD space, then $T^{p,2}(H;X)$ is  complemented in 
$L^p(\g(L^2(\tfrac{\ud y\ud t}{t^{n+1}};H),X))$.

\begin{proposition}\label{prop:complemented}
Let $1<p<\infty $, $H$ a Hilbert space, and $X$ a UMD space. The mapping 
$$N f(x,y,t):= \frac{ 1_{B(y,t)}(x)}{|B(y,t)|}\int_{B(y,t)} f(z,y,t)\ud z,$$
initially defined for operators of the form \eqref{eq:integral_operator},
extends to a bounded projection in
\begin{equation*} 
  L^p(\g(L^2(\tfrac{\ud y\ud t}{t^{n+1}};H),X))
\end{equation*}
whose range is \(T^{p,2}(H;X)\).
\end{proposition}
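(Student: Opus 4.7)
The plan is to recast $N$ as a Kalton--Weis multiplier by moving the $x$-variable inside the $\g$-radonifying norm via Fubini, using the averaging family of Proposition~\ref{prop:averaging} (with $\a=1$) as the multiplier family, and then reading off the projection and range properties from a direct kernel computation.

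First, apply the isomorphism \eqref{eq:fubini} to identify
\[
  L^p\bigl(\R^n;\g(L^2(\tfrac{\ud y\ud t}{t^{n+1}};H),X)\bigr)\cong \g\bigl(L^2(\tfrac{\ud y\ud t}{t^{n+1}};H),L^p(\R^n;X)\bigr).
\]
On the right hand side, a kernel $f(x,y,t)$ defines the integral operator $g\mapsto\int f(\cdot,y,t)g(y,t)\dydt$ with values in $L^p(\R^n;X)$. Inspecting the definition of $N$ shows that, under this identification, $N$ acts on the $L^p(\R^n;X)$-valued argument pointwise in $(y,t)\in\R^{n+1}_+$ by the averaging operator
\[
  M(y,t)F:=1_{B(y,t)}\intav_{B(y,t)}F\ud z = A^1_{B(y,t)}F,\qquad F\in L^p(\R^n;X).
\]
Proposition~\ref{prop:averaging} with $\a=1$ gives that $\mathscr{M}:=\{M(y,t):(y,t)\in\R^{n+1}_+\}$ is $\g$-bounded on $L^p(\R^n;X)$; strong measurability of $(y,t)\mapsto M(y,t)F$ for fixed $F$ follows from continuity of the averaging in the ball parameters. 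Lemma~\ref{lem:KW}, applied with base space $(\R^{n+1}_+,\tfrac{\ud y\ud t}{t^{n+1}})$, Hilbert space $H$, Banach space $L^p(\R^n;X)$, and multiplier function $M$, then yields boundedness of $N$ on the right hand side, and hence on the left hand side.

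The remaining assertions are read off by a direct kernel computation. The identity $N^2=N$ follows from $1_{B(y,t)}(z)=1$ for $z\in B(y,t)$ together with Fubini, extended to the target space by density and the boundedness already established. For the range, every $Nf$ has the form $(x,y,t)\mapsto 1_{B(y,t)}(x)\,h(y,t)$, which by $1_{B(y,t)}(x)=1_{B(x,t)}(y)$ is precisely $Jh$; conversely, for $h\in C_c(H)\otimes X$ one computes $N(Jh)=Jh$ using the same symmetry, so $N$ restricted to $J(C_c(H)\otimes X)$ is the identity, and by boundedness and the density used to define $T^{p,2}(H;X)$, $N$ is the identity on $T^{p,2}(H;X)$, which is therefore exactly its range. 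The main obstacle is conceptual rather than computational: one must recognize that after the Fubini identification, the $x$-averaging in the kernel of $N$ corresponds precisely to the pointwise $L^p(X)$-valued multiplier structure required by Lemma~\ref{lem:KW}, with the genuine difficulty already absorbed into Proposition~\ref{prop:averaging}.
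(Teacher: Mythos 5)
Your proof takes essentially the same approach as the paper: for boundedness, both pass through the Fubini isomorphism \eqref{eq:fubini}, read $N$ as the pointwise multiplier $(y,t)\mapsto A^1_{B(y,t)}$ on $L^p(\R^n;X)$, and combine Proposition~\ref{prop:averaging} with Lemma~\ref{lem:KW}; for the projection and range claims, the paper invokes the scalar case plus density of elementary tensors, which amounts to the same kernel computation and ball-symmetry $1_{B(y,t)}(x)=1_{B(x,t)}(y)$ that you spell out explicitly. One small caution: the pointwise identification $Nf=Jh$ makes literal sense only for $f$ in the dense subspace of operators of the form \eqref{eq:integral_operator} (where $f$ is genuinely a function of $(z,y,t)$ rather than an abstract $\g$-radonifying operator), after which density and the established boundedness close the argument, as you indicate.
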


\begin{proof}
We follow the proof of Harboure, Torrea, and Viviani \cite[Theorem 2.1]{HTV} for
the scalar-valued case,
the main difference being that the use of maximal functions is replaced
by a $\g$-boundedness argument using averaging operators.

First we prove that $N$ is a bounded operator.
In view of the isomorphism 
\eqref{eq:fubini}
it suffices to prove that $N$ acts as a bounded operator on 
$\g(L^2(\tfrac{\ud y\ud t}{t^{n+1}};H),L^p(X))$.
This will be achieved 
by identifying $N$ as a pointwise multiplier
on $L^p(X)$ with $\g$-bounded range, and then 
applying Lemma \ref{lem:KW}.
In fact, putting
$$ N(y,t)\,g:=  \frac{1_{B(y,t)}}{|B(y,t)|}\int_{B(y,t)} g(z)\ud z, \quad
g\in L^p(X),$$
and $f_{y,t}(x) := f(x,y,t):=\widetilde{f}(y,t)\otimes g(x)$, we have
$$Nf(\cdot,y,t)= \widetilde{f}(y,t)\otimes N(y,t)g = \widetilde{f}(y,t) \otimes A_{B(y,t)}g.$$
The $\g$-boundedness of $\{N(y,t): \ (y,t)\in \R_+^{n+1}\}$
now follows from Proposition \ref{prop:averaging}.

Knowing that $N$ is bounded on $L^p(\g(L^2(\tfrac{\ud y\ud t}{t^{n+1}};H),X))$,
the fact that it is a projection follows from the scalar case, noting that
the linear span of the 
functions of the form $1_{B(x,t)}\otimes (f\otimes \xi)$,
with $f\in C_c(H)$, $x\in \R^n$, and $t>0$, is dense in
$L^p(\g(L^2(\tfrac{\ud y\ud t}{t^{n+1}};H),X))$.   
\end{proof} 

For $\alpha>0$ the vector-valued tent space $T_{\alpha}^{p,2}(H;X)$ may be 
defined as above in terms of the norm
$$ \n f\n_{T_{\alpha}^{p,2}(H;X)} := \| J_{\alpha} f\|_{
L^p(\g(L^2(\frac{\ud y\ud t}{t^{n+1}};H),X))},$$
where $J_{\alpha} f := \big[x\mapsto [(y,t)\mapsto 1_{B(x,\alpha t)}(y)f(y,t)]\big]$.

\begin{theorem}\label{thm:angle}
Let $1<p<\infty$,  $H$ a Hilbert space and $X$ a UMD space such that $L^p(H \otimes X)$ has type $\tau$. 
For all $\alpha>0$, 
a strongly measurable function $f:\R_+^{n+1}\to H \otimes X$ belongs to
$T^{p,2}(H;X)$ if and only if it belongs to $T_{\alpha}^{p,2}(H;X)$. Moreover,
there exists a constant $C = C(p,X)$ 
such that
\begin{equation}\label{eq:angle} 
   \n f\n_{T^{p,2}(H;X)}\le \n f\n_{T_{\alpha}^{p,2}(H;X)}\le C(1+\log\alpha)\alpha^{n/\tau}
   \n f\n_{T^{p,2}(H;X)}
\end{equation}
for $f\in T^{p,2}(H;X)$ and $\alpha>1$.
\end{theorem}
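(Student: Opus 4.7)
I would split \eqref{eq:angle} into its two halves for $\alpha>1$; the equivalence for $0<\alpha\le 1$ follows from completely analogous arguments (the geometric inclusion of balls used below, applied in reverse, together with a symmetric averaging intertwiner whose symbols lie in $\mathscr{A}_{1/\alpha}$). For the first inequality, $\alpha>1$ gives $1_{B(x,t)}(y)\le 1_{B(x,\alpha t)}(y)$ pointwise, so for each fixed $x$ and every $\xi^*\in X^*$ one has
\begin{equation*}
  \|(Jf(x))^*\xi^*\|_{L^2(\tfrac{\ud y\ud t}{t^{n+1}};H)}\le\|(J_\alpha f(x))^*\xi^*\|_{L^2(\tfrac{\ud y\ud t}{t^{n+1}};H)};
\end{equation*}
covariance domination (Proposition~\ref{prop:CD}) upgrades this to $\|Jf(x)\|_{\gamma}\le\|J_\alpha f(x)\|_{\gamma}$, and integrating over $x\in\R^n$ yields the lower bound in \eqref{eq:angle}.

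For the second inequality, the key idea is to intertwine $J$ and $J_\alpha$ by a single bounded operator built from the averaging family of Proposition~\ref{prop:averaging}. Define, initially on the dense subspace generated by elementary tensors $f=g\otimes\xi\in C_c(H)\otimes X$,
\begin{equation*}
  Sh(x,y,t):=\frac{1_{B(y,\alpha t)}(x)}{|B(y,t)|}\int_{B(y,t)} h(z,y,t)\,\ud z.
\end{equation*}
Using the tautology $1_{B(z,t)}(y)=1$ for $z\in B(y,t)$, a direct computation on elementary tensors yields $SJf=J_\alpha f$. Conceptually, $S$ is precisely the pointwise-in-$(y,t)$ Kalton--Weis multiplier of Lemma~\ref{lem:KW} whose symbols $M(y,t):=A^\alpha_{B(y,t)}$ lie in the family $\mathscr{A}_\alpha$ of Proposition~\ref{prop:averaging}, acting in the $z$-variable on $L^p(H\otimes X)$-valued data. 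The type-$\tau$ hypothesis on $L^p(H\otimes X)$ therefore delivers, via Proposition~\ref{prop:averaging}, the $\gamma$-bound $\gamma(\{M(y,t):(y,t)\in\R_+^{n+1}\})\le C(1+\log\alpha)\alpha^{n/\tau}$. Lemma~\ref{lem:KW}, combined with the Fubini-type isomorphism \eqref{eq:fubini} that passes between $L^p(\gamma(L^2(\tfrac{\ud y\ud t}{t^{n+1}};H),X))$ and $\gamma(L^2(\tfrac{\ud y\ud t}{t^{n+1}};H),L^p(X))$, then shows that $S$ extends to a bounded operator on the former with norm no larger than $C(1+\log\alpha)\alpha^{n/\tau}$, whence
\begin{equation*}
  \|f\|_{T_\alpha^{p,2}(H;X)}=\|SJf\|_{L^p(\gamma(\cdots))}\le C(1+\log\alpha)\alpha^{n/\tau}\|f\|_{T^{p,2}(H;X)}.
\end{equation*}

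The crux of the argument is the recognition that $S$ is a Kalton--Weis multiplier whose symbols lie in $\mathscr{A}_\alpha$: the entire quantitative dependence on $\alpha$ in \eqref{eq:angle} is then inherited from Proposition~\ref{prop:averaging}. The only non-routine verification is the identity $SJf=J_\alpha f$ on elementary tensors; density of $C_c(H)\otimes X$ in $T^{p,2}(H;X)$ (via the projection of Proposition~\ref{prop:complemented}) and the isomorphism \eqref{eq:fubini} then handle the extension to all of $T^{p,2}(H;X)$. In particular, any sharpening of the averaging estimate would transfer directly to a sharpening of the aperture dependence in \eqref{eq:angle}.
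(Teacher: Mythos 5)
Your argument is correct and follows the same route as the paper: the paper also introduces the operator $N_\alpha f(x,y,t)=\frac{1_{B(y,\alpha t)}(x)}{|B(y,t)|}\int_{B(y,t)}f(z,y,t)\,\ud z$ (your $S$), checks $N_\alpha J f = J_\alpha f$, passes to $\gamma(L^2(\tfrac{\ud y\ud t}{t^{n+1}};H),L^p(X))$ via \eqref{eq:fubini}, and identifies $N_\alpha$ as the Kalton--Weis multiplier with symbols $A^\alpha_{B(y,t)}\in\mathscr{A}_\alpha$, concluding by Proposition~\ref{prop:averaging}. Your treatment of the trivial first inequality via covariance domination and the remark about $0<\alpha\le1$ are compatible with the paper's terse ``it suffices to prove the latter estimate''.
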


\begin{proof}
It suffices to prove the latter estimate in~\eqref{eq:angle}. On $L^p(\g(L^2(\dydt;H),X))$, we consider the operator
\[
  N_{\alpha} f(x,y,t): = \frac{1_{B(y,\alpha t)}(x) }{\abs{B(y,t)}}\int_{B(y,t)}f(z,y,t)\ud z.
\]
Simple algebra shows that $N_{\alpha}Jf=J_{\alpha}f$, and hence
\[\begin{split}
  \|f\|_{T^{p,2}_{\alpha}(X)}
  &=\|J_{\alpha}f\|_{L^p(\gamma(L^2(\dydt;H),X))}
  =\|N_{\alpha} Jf\|_{L^p(\gamma(L^2(\dydt;H),X))} \\
  &\leq \|N_{\alpha}\|_{\calL(L^p(\gamma(L^2(\dydt;H),X)))} \|Jf\|_{L^p(\gamma(L^2(\dydt;H),X))}.
\end{split}\]
By the isomorphism \eqref{eq:fubini}, we may consider the boundedness of $N_{\alpha}$ on the space 
$\gamma(L^2(\dydt;H),L^p(X))$ instead, and here this operator acts as the pointwise multiplier
$$N_{\alpha}(\widetilde{f}\otimes g)(\cdot,y,t)=\widetilde{f}(y,t) \otimes A^{\alpha}_{B(y,t)}g.$$
So, its boundedness with the asserted estimate follows from Proposition~\ref{prop:averaging}.
\end{proof}

\begin{remark}
If \(X=\C\), then one can take \(\tau=\min(2,p)\) in Theorem~\ref{thm:angle}. Except possibly for the logarithmic factor, \eqref{eq:angle} gives the correct order of growth of \(\|f\|_{T^{p,2}_{\alpha}}\) in terms of the angle \(\alpha\geq 1\).

To see this, consider functions of the form \(f(y,t)=1_{[1,2]}(t)g(y)\). Then
\begin{equation*}
  \|f\|_{T^{p,2}_{\alpha}}
  =\big\|(\eta_{\alpha}*\abs{g}^2)^{1/2}\big\|_p,
\end{equation*}
where the \(\eta_{\alpha}\) are functions having pointwise bounds \(c 1_{B(0,\alpha)}\leq\eta_{\alpha}\leq C 1_{B(0,C\alpha)}\) for some constants \(C>1>c>0\) depending only on \(n\).

Let us take \(g=\abs{g}^2=1_{B(0,1)}\). Then \((\eta_{\alpha}*\abs{g}^2)^{1/2}=\tilde\eta_{\alpha}\), where \(\tilde\eta_{\alpha}\) is another similar function, and hence
\begin{equation*}
  \|f\|_{T^{p,2}_{\alpha}}
  =\big\|(\tilde\eta_{\alpha})^{1/2}\big\|_{p}
  \eqsim\alpha^{n/p}
  \eqsim\alpha^{n/p}\|f\|_{T^{p,2}}.
\end{equation*}
This proves the sharpness for \(p\leq 2\).

Let us then choose \(g=g_{\alpha}=1_{B(0,\alpha)}\). Then
\begin{equation*}
  \eta_{\alpha}*\abs{g_{\alpha}}^2=\alpha^n\overline\eta_{\alpha},\qquad
  \eta_{1}*\abs{g_{\alpha}}^2=\underline\eta_{\alpha},
\end{equation*}
where \(\overline\eta_{\alpha},\underline\eta_{\alpha}\) are yet more similar functions as \(\eta_{\alpha}\). Writing \(f_{\alpha}(y,t)=1_{[1,2]}(t)g_{\alpha}(y)\), we have
\begin{equation*}
  \|f_{\alpha}\|_{T^{p,2}_{\alpha}}
  =\big\|(\alpha^n\overline\eta_{\alpha})^{1/2}\big\|_p
  =\alpha^{n/2}\big\|(\overline\eta_{\alpha})^{1/2}\big\|_p
  \eqsim\alpha^{n/2}\big\|(\underline\eta_{\alpha})^{1/2}\big\|_p
  =\alpha^{n/2}\|f_{\alpha}\|_{T^{p,2}}.
\end{equation*}
This proves the sharpness for \(p\geq 2\).

In fact, for \(p=2\), a simple application of Fubini's theorem shows that we have the equality \(\|f\|_{T^{2,2}_{\alpha}}=\alpha^{n/2}\|f\|_{T^{2,2}}\) for all \(f\in T^{2,2}\) and \(\alpha>0\), so the logarithmic factor is unnecessary in this case.
\end{remark}

Sometimes it is useful to use tent space norms defined with a smooth cut-off instead of the sharp cut-off 
\(1_{B(x,t)}(y)\).
Given a function \(\phi \in C_c^{\infty}(\R)\) such that \(\phi(w)=1 \) if \(|w| \leq \frac{1}{2}\) and \(\phi(w)=0 \) if \(|w| \geq 1\), 
we are thus led to consider the mapping 
$J_{\phi} f := \big[x\mapsto [(y,t)\mapsto \phi(\frac{|y-x|}{t})f(y,t)]\big]$
and $$\|f\|_{T^{p,2}_{\phi}(H;X)}:=
\|J_{\phi}f\|_{L^p(\g(L^2(\tfrac{\ud y\ud t}{t^{n+1}};H),X))}.$$
\begin{proposition}
\label{prop:smooth}
Let $1<p<\infty$,  $H$ a Hilbert space and $X$ a UMD space.
A strongly measurable function $f:\R_+^{n+1}\to H \otimes X$ belongs to
$T^{p,2}(H;X)$ if and only if it belongs to $T_{\phi}^{p,2}(H;X)$. Moreover,
$$
 \n f\n_{T_{\phi}^{p,2}(H;X)}\eqsim   \n f\n_{T^{p,2}(H;X)}
$$
for $f\in T^{p,2}(H;X)$.
\end{proposition}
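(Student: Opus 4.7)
The plan is to sandwich the smooth cut-off $\phi$ between two sharp cut-offs, propagate the resulting inequality through the $\g$-norm by covariance domination (Proposition~\ref{prop:CD}), and close the sandwich using the aperture-change result of Theorem~\ref{thm:angle}.

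Set $M:=\n\phi\n_\infty<\infty$. Since $\phi^2\equiv 1$ on $[-\tfrac12,\tfrac12]$ and $\phi^2\equiv 0$ outside $[-1,1]$, we have the pointwise sandwich
\[
  1_{B(x,t/2)}(y)\,\le\,\phi(|y-x|/t)^2\,\le\,M^2\cdot 1_{B(x,t)}(y)\qquad (x,y\in\R^n,\ t>0).
\]
For each $\xi\s\in X\s$ and $x\in\R^n$, multiplying by $\n\langle f(y,t),\xi\s\rangle\n_H^2$ and integrating against $\dydt$ gives the corresponding sandwich of squared Pettis norms of the operators $J_{1/2}f(x),J_\phi f(x),Jf(x)\in\calL(L^2(\dydt;H),X)$, where $J_{1/2}f(x)(y,t):=1_{B(x,t/2)}(y)f(y,t)$ is the aperture-$\tfrac12$ analogue of $Jf(x)$. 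Two applications of covariance domination propagate this pointwise in $x$ to the $\g$-radonifying norms, and integrating the $p$-th power over $\R^n$ yields
\[
  \n f\n_{T^{p,2}_{1/2}(H;X)}\,\le\,\n f\n_{T^{p,2}_\phi(H;X)}\,\le\,M\n f\n_{T^{p,2}(H;X)}.
\]

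To close the sandwich it remains to prove $\n f\n_{T^{p,2}(H;X)}\lesssim\n f\n_{T^{p,2}_{1/2}(H;X)}$. Theorem~\ref{thm:angle} provides this for apertures $\alpha>1$, and the case $\alpha=\tfrac12$ reduces to the case $\alpha=2$ by a dilation in $t$: setting $\tilde f(y,s):=f(y,2s)$, the substitution $t=2s$ in the defining integrals (together with the induced unitary on $L^2(\dydt;H)$) gives the identity $\n\tilde f\n_{T^{p,2}_\alpha(H;X)}=2^{n/2}\n f\n_{T^{p,2}_{\alpha/2}(H;X)}$ for every $\alpha>0$. Applying Theorem~\ref{thm:angle} with $\alpha=2$ to $\tilde f$ and dividing by $2^{n/2}$ gives the required inequality.

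The only mildly subtle step is the verification of covariance domination in the $H$-valued setting, but this is routine: the scalar weight $\phi(|y-x|/t)$ multiplies the full $H\otimes X$-valued integrand, so the Pettis-norm inequalities reduce directly to the pointwise inequalities between the scalar weights themselves.
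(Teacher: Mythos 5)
Your proof is correct, but it takes a genuinely different and more elementary route than the paper's. The paper introduces the averaging-type operators $N_\phi$ and $\widetilde N_{1/2}$, factorizes $J_\phi=N_\phi J$ and $J_{1/2}=\widetilde N_{1/2}J_\phi$, and bounds these via the $\gamma$-bounded multiplier machinery of Lemma~\ref{lem:KW}, Proposition~\ref{prop:averaging}, and Kahane's contraction principle. You instead observe that, because $\phi$ is a scalar weight, the pointwise inequalities $1_{B(x,t/2)}(y)\le\phi(|y-x|/t)^2\le\|\phi\|_\infty^2\,1_{B(x,t)}(y)$ immediately transfer to Pettis norms of $J_{1/2}f(x)$, $J_\phi f(x)$, $Jf(x)$ and hence, by covariance domination (Proposition~\ref{prop:CD}), to their $\gamma$-norms; this bypasses the averaging and Kalton--Weis machinery entirely. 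Closing the sandwich requires the embedding $T^{p,2}_{1/2}(H;X)\hookrightarrow T^{p,2}(H;X)$, which you obtain from Theorem~\ref{thm:angle} at aperture $\alpha=2$ via the dilation $\tilde f(y,s)=f(y,2s)$ and the identity $\|\tilde f\|_{T^{p,2}_\alpha}=2^{n/2}\|f\|_{T^{p,2}_{\alpha/2}}$ (again an equality of Pettis norms propagated by covariance domination) -- a step that is in fact implicitly needed in the paper too, since Theorem~\ref{thm:angle} is stated for all $\alpha>0$ while its proof only treats $\alpha>1$. Your argument is cleaner and self-contained; the paper's has the virtue of reusing the same averaging tools as elsewhere in the section. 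Both are valid.
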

\begin{proof}
The proof is the same as that of Theorem \ref{thm:angle}.
Consider the operators
\begin{equation*}\begin{split}
  N_{\phi} f(x,y,t) &:= \frac{\phi(\frac{|y-x|}{t})}{\abs{B(y,t)}}\int_{B(y,t)}f(z,y,t)\ud z,\\
  \widetilde{N}_{\frac{1}{2}} f(x,y,t) &: = \frac{1_{B(x,\frac{t}{2})}}{\abs{B(y,\frac{t}{2})}}\int_{B(y,\frac{t}{2})}f(z,y,t)\ud z.
\end{split}\end{equation*} 
We have \(J_{\phi} = N_{\phi}J\) and \(J_{\frac{1}{2}} = \widetilde{N}_{\frac{1}{2}}J_{\phi}\).
Moreover the operators \(N_{\phi}\) and \( \widetilde{N}_{\frac{1}{2}}\) act as the pointwise multipliers
\begin{equation*}\begin{split}
  N_{\phi}(\widetilde{f}\otimes g)(\cdot,y,t)
    &=\widetilde{f}(y,t) \otimes M^{\phi}_{y,t}A^{1}_{B(y,t)}g, \\
  \widetilde{N}_{\frac{1}{2}}(\widetilde{f}\otimes g)(\cdot,y,t)
    &=\widetilde{f}(y,t) \otimes A^{1}_{B(y,\frac{t}{2})}g.
\end{split}\end{equation*} 
where \(M^{\phi}_{y,t}g(x):=\phi(\frac{|y-x|}{t})g(x)\).
By  Lemma \ref{lem:KW} and Theorem \ref{thm:angle} the result follows from Proposition~\ref{prop:averaging} 
and Kahane's contraction principle.
\end{proof}

If $X$ is a UMD space, $H$ a Hilbert space, and $1<p,q<\infty$ satisfy 
$\frac1p+\frac1q=1$, we have natural isomorphisms
$$
\bal
\ & (L^p(\g(L^2(\tfrac{\ud y\ud t}{t^{n+1}};H),X)))\s 
\\ & \qquad  \eqsim
L^q((\g(L^2(\tfrac{\ud y\ud t}{t^{n+1}};H),X))\s)
\eqsim
L^q(\g(L^2(\tfrac{\ud y\ud t}{t^{n+1}};H),X\s))).
\eal
$$
The first of these follows from the fact that
$X$, and therefore $\g(L^2(\tfrac{\ud y\ud t}{t^{n+1}};H),X)$,
is reflexive, and the second follows from the $K$-convexity of UMD spaces.
Denoting by $N$ the projection of Proposition 
\ref{prop:complemented}, it is easily verified that
under the above identification the adjoint $N\s$ is given by the same formula.
As a result we obtain the following representation for the dual of $T^{p,2}(H;X)$:

\begin{theorem}
If $X$ is a UMD space, $H$ a Hilbert space, and $1<p,q<\infty$ satisfy 
$\frac1p+\frac1q=1$, we have a natural isomorphism
$$ (T^{p,2}(H;X))\s \eqsim T^{q,2}(H;X\s).$$
\end{theorem}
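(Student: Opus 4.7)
The plan is to realise both sides of the asserted isomorphism as ranges of bounded projections acting on a dual pair of Banach spaces, and then to invoke the standard duality for complemented subspaces. Set $Z_p := L^p(\g(L^2(\tfrac{\ud y\ud t}{t^{n+1}};H),X))$ and $Z_q := L^q(\g(L^2(\tfrac{\ud y\ud t}{t^{n+1}};H),X\s))$. Since the UMD property passes to duals, Proposition~\ref{prop:complemented} applies both to $X$ and to $X\s$: it exhibits $T^{p,2}(H;X)$ as the range of a bounded projection $N$ on $Z_p$ and $T^{q,2}(H;X\s)$ as the range of the projection given by the same formula on $Z_q$. The identification $Z_p\s \eqsim Z_q$ recalled just before the theorem rests on two ingredients already available: the reflexivity of $\g(L^2(\tfrac{\ud y\ud t}{t^{n+1}};H),X)$, which follows from the reflexivity of $X$, together with the $K$-convex duality $\g(L^2(\tfrac{\ud y\ud t}{t^{n+1}};H),X)\s \eqsim \g(L^2(\tfrac{\ud y\ud t}{t^{n+1}};H),X\s)$.

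The one genuine computation I would carry out is to check that under the identification $Z_p\s \eqsim Z_q$ the adjoint projection $N\s$ is represented by the very same formula as $N$, now acting on $Z_q$. This is a short Fubini argument on the bilinear pairing
\[
  \int_{\R^n}\int_{\R_+^{n+1}} \langle Nf(x,y,t),g(x,y,t)\rangle\,\tfrac{\ud y\,\ud t}{t^{n+1}}\,\ud x,
\]
exploiting the symmetry $1_{B(y,t)}(x)=1_{B(x,t)}(y)$ together with the self-adjointness of the scalar averaging operator $g\mapsto\abs{B}^{-1}\int_B g\,\ud z$ in the $L^2$-pairing. This is the step I would expect to require the most care, since one must be sure that the $X$--$X\s$ duality and the $L^2$ Hilbert pairing combine correctly through the $\g$-space duality; once $N\s$ is identified, however, its range inside $Z_q$ coincides with $T^{q,2}(H;X\s)$ by Proposition~\ref{prop:complemented} applied to $X\s$.

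Finally I would invoke the elementary fact that for any bounded projection $P$ on a Banach space $Z$ with range $Y$, the map $y\s\mapsto y\s\circ P$ is a Banach space isomorphism of $Y\s$ onto the range of $P\s$ in $Z\s$, with inverse given by restriction to $Y$. Applying this with $P=N$, $Z=Z_p$, $Y=T^{p,2}(H;X)$, and combining with the two previous identifications, yields
\[
  (T^{p,2}(H;X))\s \eqsim N\s(Z_p\s) \eqsim N(Z_q) = T^{q,2}(H;X\s),
\]
which is the claimed isomorphism. Every step beyond the Fubini identification of $N\s$ is either a direct quotation of a result stated earlier in the paper or a standard abstract fact about dualising complemented subspaces, so I do not anticipate additional obstacles.
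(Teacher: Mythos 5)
Your proposal follows the paper's proof essentially verbatim: identify $Z_p^*\eqsim Z_q$ via reflexivity of $\g(L^2(\tfrac{\ud y\ud t}{t^{n+1}};H),X)$ and $K$-convex duality, observe that the adjoint of the projection $N$ from Proposition~\ref{prop:complemented} is given by the same formula, and conclude by the standard duality of complemented subspaces. The only superfluous ingredient you mention is the symmetry $1_{B(y,t)}(x)=1_{B(x,t)}(y)$, which is not actually needed for $N^*=N$ (what does the work is the self-adjointness of the averaging operator $g\mapsto 1_{B(y,t)}|B(y,t)|^{-1}\int_{B(y,t)}g\,\ud z$ in $L^2(\ud x)$, which you also correctly invoke), but this does not affect the argument.
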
 

As an immediate consequence of Proposition
\ref{prop:interpolation} we obtain the following result.

\begin{theorem}
\label{thm:Tp-interpolation}
Let $1<p_0\le p_1<\infty$, $H$ a Hilbert space, and let $X_0$ and $X_1$ be UMD spaces. Then for all
$0<\theta<1$ we have
$$ [T^{p_0,2}(H;X_0), T^{p_1,2}(H;X_1)]_\theta = T^{p_\theta,2}(H;[X_0,
X_1]_\theta), \quad \frac{1}{p_\theta} = \frac{1-\theta}{p_0}+\frac{\theta}{p_1}.$$ 
\end{theorem}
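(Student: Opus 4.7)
The plan is to realize $T^{p,2}(H;X)$ as a complemented subspace of $L^p(\gamma(L^2(\dydt;H),X))$ via the projection $N$ of Proposition~\ref{prop:complemented}, which is defined by the same explicit formula independently of $p$ and $X$. Once this is set up, I would reduce the theorem to interpolating the ambient $L^p(\gamma)$-spaces and then invoke the general principle that complex interpolation of complemented subspaces yields the corresponding complemented subspace of the interpolated ambient space (Bergh--L\"ofstr\"om, Chapter~5, already used in Proposition~\ref{prop:interpolation}).

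To interpolate the ambient spaces, I would proceed in two layers. First, apply the classical complex interpolation formula for Bochner $L^p$-spaces with varying targets, $[L^{p_0}(Y_0),L^{p_1}(Y_1)]_\theta = L^{p_\theta}([Y_0,Y_1]_\theta)$, to reduce the outer layer. Second, apply Proposition~\ref{prop:interpolation} to the inner $\gamma$-layer with $Y_i = \gamma(L^2(\dydt;H),X_i)$; this step is justified because UMD spaces are $K$-convex by Pisier's theorem (recalled in Section~\ref{sect:prelim}). Combining these, one obtains
\[
  [L^{p_0}(\gamma(L^2(\dydt;H),X_0)),L^{p_1}(\gamma(L^2(\dydt;H),X_1))]_\theta
  = L^{p_\theta}(\gamma(L^2(\dydt;H),[X_0,X_1]_\theta)).
\]

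It then remains to transfer this identification to the tent spaces through $N$. Since $N$ is given by a single pointwise integral formula on the dense subspace $C_c(H)\otimes (X_0\cap X_1)$, it defines one operator on the compatible couple whose bounded extensions to the two endpoint ambient spaces are the projections onto $T^{p_i,2}(H;X_i)$. Consequently $N$ extends to a bounded projection on the interpolated ambient space, and the complemented-subspace interpolation theorem identifies $[T^{p_0,2}(H;X_0),T^{p_1,2}(H;X_1)]_\theta$ with the range of this extension, which by definition is $T^{p_\theta,2}(H;[X_0,X_1]_\theta)$.

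The main point requiring care is the compatibility verification: one must check that the two endpoint incarnations of $N$ really are restrictions of a single operator on the sum $L^{p_0}(\gamma(L^2(\dydt;H),X_0))+L^{p_1}(\gamma(L^2(\dydt;H),X_1))$, and that the range of the extended $N$ is genuinely $T^{p_\theta,2}(H;[X_0,X_1]_\theta)$ and not some a priori different completion. Both reduce to the fact that $N$ acts by the same pointwise formula on the common dense subspace $C_c(H)\otimes (X_0\cap X_1)$, so both verifications should be routine once the ambient interpolation identity is in hand.
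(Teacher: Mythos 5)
Your argument is correct and coincides in essence with the paper's: realize $T^{p,2}(H;X)$ as the range of the projection $N$ of Proposition~\ref{prop:complemented}, interpolate the ambient spaces, and transfer via the Bergh--L\"ofstr\"om theorem on interpolation of complemented subspaces. The only difference is cosmetic---the paper first applies \eqref{eq:fubini} to pass to $\g(L^2(\dydt;H),L^p(X))$ and then invokes Proposition~\ref{prop:interpolation} with the $K$-convex targets $L^{p_i}(X_i)$ together with Calder\'on's formula $[L^{p_0}(X_0),L^{p_1}(X_1)]_\theta=L^{p_\theta}([X_0,X_1]_\theta)$, whereas you interpolate the outer $L^p$-layer and the inner $\g$-layer in place without the Fubini swap, and you make explicit the compatibility of $N$ across the couple, which the paper's terse proof leaves implicit.
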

\begin{proof}
The result follows by combining \eqref{eq:fubini} with the following
facts: (i) 
if $X$ is a UMD space, then $L^p(X)$ is a UMD space for all $1<p<\infty$, 
(ii) UMD spaces
are $K$-convex, (iii) for $1\le p_0\le p_1<\infty$ we have 
$[L^{p_0}(X_0), L^{p_1}(X_1)]_\theta = L^{p_\theta}([X_0,X_1]_\theta)$ with 
$p_\theta$ as above.
\end{proof}

We conclude this section with a result showing that certain singular integral operators are bounded from $L^{p}(X)$ to
$T^{p,2}(X)$. This gives a Banach space-valued extension of \cite[Section 4]{HTV}.

\begin{theorem}
\label{thm:sing}
Let \(X\) be a UMD space. Consider the 
singular integral operator defined by
\begin{equation*} 
  Sf(t,y) = \int  _{\R^{n}} k_{t}(y,z)f(z)\ud z 
\end{equation*} 
for \(f \in C_{c}(\R^{n})\) and a measurable complex-valued function \((t,y,z)\mapsto k_{t}(y,z)\).
Assume that
\begin{enumerate}
\item[\rm (1)]
\(S \in \calL(L^{2},T^{2,2})\),
\item[\rm (2)]
There exists \(\alpha>0\) such that for all $y,z \in \R^{n}$ and $t>0$ we have
\begin{equation*}
  |k_{t}(y,z)| \lesssim \frac{t^{\alpha}}{(|y-z|+t)^{n+\alpha}},
\end{equation*}
\item[\rm (3)]
There exists $\beta>0$ such that for all $t>0$ and all 
$ y,z,z' \in \R^{n}$ satisfying \(|z-y|+t >2|z-z'|\) we have 
\begin{equation*}
  |k_{t}(y,z)-k_{t}(y,z')| \lesssim
  \frac{t^{\beta}|z-z'|}{(|y-z|+t)^{n+1+\beta}},
\end{equation*}
\item[\rm (4)] For all $t>0$ and \(y\in\R^n\) we have
\begin{equation*}
  \int _{\R^{n}}k_{t}(y,z)\ud z = 0.
\end{equation*}
\end{enumerate}
Let \(1<p<\infty\). Then $S\otimes I_X$ extends to a bounded operator from
\(L^{p}(X)\) to \(T^{p,2}(X)\).
\end{theorem}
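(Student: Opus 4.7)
The plan is to realize $S\otimes I_X$ as a Calder\'on--Zygmund operator with operator-valued kernel from $L^p(\R^n;X)$ into the ambient space $L^p(\R^n;\gamma(L^2(\tfrac{\ud y\,\ud t}{t^{n+1}}),X))$ --- which by Proposition~\ref{prop:complemented} contains $T^{p,2}(X)$ as a complemented subspace --- and then invoke the standard UMD-valued Calder\'on--Zygmund theorem. Setting $Tf(x):=J(Sf)(x)$ and $k^x_z(y,t):=1_{B(x,t)}(y)k_t(y,z)$, one obtains the integral representation $Tf(x)=\int_{\R^n}K(x,z)f(z)\,\ud z$, where $K(x,z)\in\calL(X,\gamma(L^2(\tfrac{\ud y\,\ud t}{t^{n+1}}),X))$ is the rank-one operator $\xi\mapsto k^x_z\otimes\xi$, whose operator norm equals $\|k^x_z\|_{L^2(\tfrac{\ud y\,\ud t}{t^{n+1}})}$ (the $\gamma$-norm of a rank-one operator coincides with its operator norm).

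The kernel estimates follow directly from (2) and (3). Splitting the $(y,t)$-integration according to whether $t\lessgtr |x-z|$, condition (2) yields the standard size bound $\|K(x,z)\|\lesssim|x-z|^{-n}$. The analogous calculation using (3) gives the H\"ormander smoothness condition $\|K(x,z)-K(x,z')\|\lesssim|z-z'|^{\beta}|x-z|^{-n-\beta}$ whenever $|x-z|>2|z-z'|$. Condition (4) supplies the cancellation required for $L^2$-boundedness.

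The main technical obstacle is the $L^2$ endpoint: upgrading assumption (1), which gives $S\colon L^2(\R^n)\to T^{2,2}$ in the scalar case, to $T\colon L^2(\R^n;X)\to L^2(\R^n;\gamma(L^2(\tfrac{\ud y\,\ud t}{t^{n+1}}),X))$. Since the target is a $\gamma$-space --- which does not coincide with $L^2(\tfrac{\ud y\,\ud t}{t^{n+1}};X)$ when $X$ is not Hilbert --- this does not follow from (1) by Fubini. The key observation is that because $k_t$ is scalar, for any $\xi^*\in X^*$ one has the pointwise-in-$x$ identity $(Tf(x))^*\xi^*=J\bigl(S(\xi^*\!\circ f)\bigr)(x)$ in $L^2(\tfrac{\ud y\,\ud t}{t^{n+1}})$, so that
\[
\Big(\int_{\R^n}\|(Tf(x))^*\xi^*\|_{L^2(\tfrac{\ud y\,\ud t}{t^{n+1}})}^2\,\ud x\Big)^{1/2}=\|S(\xi^*\!\circ f)\|_{T^{2,2}}\lesssim\|\xi^*\|_{X^*}\|f\|_{L^2(X)}.
\]
Combining this estimate with the Fubini-type isomorphism \eqref{eq:fubini} $L^2(\R^n;\gamma(L^2(\tfrac{\ud y\,\ud t}{t^{n+1}}),X))\simeq\gamma(L^2(\tfrac{\ud y\,\ud t}{t^{n+1}}),L^2(\R^n;X))$ and applying covariance domination (Proposition~\ref{prop:CD}) in the latter $\gamma$-space yields the required $L^2$-bound for $T$.

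With the kernel conditions and the $L^2$ endpoint in hand, the classical Calder\'on--Zygmund theorem for UMD-valued singular integrals with operator-valued kernel --- applicable since $\gamma(L^2(\tfrac{\ud y\,\ud t}{t^{n+1}}),X)$ is UMD whenever $X$ is --- extends $T$ to a bounded operator $L^p(\R^n;X)\to L^p(\R^n;\gamma(L^2(\tfrac{\ud y\,\ud t}{t^{n+1}}),X))$ for all $1<p<\infty$. Since $Tf=J(Sf)$ lies in $T^{p,2}(X)$ by construction, this completes the proof. The delicate point is clearly the $L^2$ endpoint: the scalar-valuedness of $k_t$ combined with covariance domination is precisely what permits passage from the scalar tent-space bound (1) to the $\gamma$-valued $L^2$-estimate.
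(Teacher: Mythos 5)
Your broad strategy—realise $Tf:=J(Sf)$ as a Calder\'on--Zygmund operator into $L^p(\R^n;\gamma(L^2(\tfrac{\ud y\ud t}{t^{n+1}}),X))$ and extrapolate—is closer in spirit to the paper than it might seem, but there are two genuine gaps, and the paper's actual device (a smooth cut-off with a vanishing moment, together with a vector-valued $T(1)$ theorem) is precisely what circumvents both.

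The decisive problem is the $L^2$ endpoint. After the Fubini isomorphism, you want $Tf\in\gamma(L^2(\tfrac{\ud y\ud t}{t^{n+1}}),L^2(\R^n;X))$, and Proposition~\ref{prop:CD} would require two things you do not have: a comparison operator $R\in\gamma(L^2(\tfrac{\ud y\ud t}{t^{n+1}}),L^2(X))$, and an estimate of $\|(Tf)^*g^*\|_{L^2(\frac{\ud y\ud t}{t^{n+1}})}$ valid for \emph{all} $g^*\in L^2(\R^n;X^*)$, which is the relevant dual. Your estimate only tests $Tf(x)$ against a single fixed $\xi^*\in X^*$, uniformly in $x$; this controls, essentially, the operator norm of $Tf:L^2(\tfrac{\ud y\ud t}{t^{n+1}})\to L^2(X)$, not its $\gamma$-norm, and a constant function $x\mapsto\xi^*$ is not even an element of $L^2(\R^n;X^*)$. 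For a genuine $g^*$, the quantity $(Tf)^*g^*(y,t)=\int_{B(y,t)}S(\langle f,g^*(x)\rangle)(y,t)\,\ud x$ mixes the $x$-variable of $g^*$ with the kernel in a way your estimate does not reach. In short, the passage from the scalar $T^{2,2}$-bound~(1) to the $\gamma$-valued $L^2$-bound is exactly the nontrivial content of the theorem at $p=2$, and it is not free; the paper never argues this step directly, because a $T(1)$ theorem produces $L^p$-boundedness (all $1<p<\infty$ at once) from kernel estimates, the weak boundedness property, and the cancellation conditions $T(1)=T'(1)=0$, without needing $L^2$ as a hypothesis.

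A secondary issue: to run the extrapolation in the full range $1<p<\infty$ one needs the H\"ormander condition in \emph{both} kernel variables (the $z$-variable for $p\leq 2$, the $x$-variable for the dual/$p\geq 2$ range). You only verify the $z$-smoothness. The $x$-smoothness is in fact problematic for the sharp cut-off $1_{B(x,t)}(y)$, because the difference $1_{B(x,t)}-1_{B(x',t)}$ lives on a thin shell where $|y-z|$ can be arbitrarily small. This is why the paper replaces the sharp cut-off with a smooth radial $\phi$: it yields $|\phi(|y-x|/t)-\phi(|y-x'|/t)|\lesssim |x-x'|/t$ by the mean value theorem, producing the estimate \eqref{t1-claim2}. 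Moreover the paper's $\phi$ is chosen to be signed, with $\int_0^1\phi(r)r^{n-1}\ud r=0$; this makes the transposed symbol $T'(1)$ vanish (condition~(4) already makes $T(1)$ vanish), which is essential for the $T(1)$ theorem and has no analogue with $1_{B(x,t)}$ (where $T'(1)(z)(y,t)=c_n t^n k_t(y,z)\neq 0$). So the smooth cut-off is not cosmetic; it is the mechanism that makes both the kernel smoothness and the cancellation work.

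Salvaging your approach would therefore require, at minimum, an independent proof of the $L^2$ endpoint (essentially proving the theorem at $p=2$ by some other means) together with a genuine verification of the $x$-smoothness of the kernel. It is cleaner to do what the paper does: introduce the smooth cut-off with vanishing moment and invoke the vector-valued $T(1)$ theorem, whose hypotheses the paper's conditions (1)--(4) are tailored to verify.
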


\begin{proof}
We consider the auxiliary operator \(T\) taking \(X\)-valued functions 
to ones with values in \(\gamma(L^2(\dydt),X)\), 
given by
\begin{equation*} 
  Tf(x) = \int  _{\R^{n}} K(x,z)\otimes f(z)\ud z,\qquad f\in C_c(X),
\end{equation*} 
where \(K(x,z)\) is the \(L^2(\dydt)\)-valued kernel defined by
\begin{equation*}
  K(x,z) : (y,t)\mapsto
  \phi\big(\frac{|y-x|}{t}\big)k_{t}(y,z)
\end{equation*}
for some even \(\phi \in C_c^{\infty}(\R)\) such that \(\phi(w)=1 \) if \(|w| \leq \frac{1}{2}\), \(\phi(w)=0 \) if \(|w| \geq 1\), and \(\int_0^1\phi(r)r^{n-1}\ud r=0\).
The claim of the theorem follows if we can show that \(T\) extends to a bounded operator from \(L^{p}(X)\) to
\(L^{p}(\gamma(L^{2}(\dydt);X))\).
This is proved by applying a version of the \(T(1)\) theorem for Hilbert space -valued kernels from~\cite{HK} (which, in turn, is based on results from \cite{hw-t1,kw-wavelet}).  We first remark that the condition \(T(1)=0\) follows directly from (4), whereas the vanishing integral assumption on \(\phi\) guarantees that \(T'(1)=0\), too.
It remains to check the following \(L^2(\dydt)\)-valued versions of the standard estimates:
\begin{equation}
\label{t1-claim1}
\underset{x,z \in \R^{n}}{\sup} |x-z|^{n}
  \|K(x,z)\|_{L^2(\dydt)}\lesssim 1,
\end{equation}
\begin{equation}
\label{t1-claim2}
  \underset{\underset{|x-z|>2|x-x'|}{x,x',z \in \R^{n}}}{\sup} 
  \frac{|x-z|^{n+1}}{|x-x'|}
  \|K(x,z)-K(x',z)\|_{L^2(\dydt)}\lesssim 1,
\end{equation}
\begin{equation}
\label{t1-claim3}
  \underset{\underset{|x-z|>2|z-z'|}{x,z,z' \in \R^{n}}}{\sup} 
  \frac{|x-z|^{n+1}}{|z-z'|}
  \|K(x,z)-K(x,z')\|_{L^2(\dydt)}\lesssim 1,
\end{equation}
and the weak boundedness property: for any \(\eta, \widetilde{\eta}\in C^{\infty}_c(B(0,1))\) which satisfy the bounds \(\|\eta\|_\infty,\|\widetilde\eta\|_\infty,\|\nabla\eta\|_\infty,\|\nabla\widetilde\eta\|_\infty\leq 1\), one should have
\begin{equation}
\label{t1-claim4}
  \underset{(u,r) \in \R^{n}\times\R_{+}}{\sup}  
  \Big\|\int_{\R^{n}}\!\!  \int_{\R^{n}} \!
  K(x,z)\eta\big(\frac{x-u}{r}\big)\widetilde{\eta}(\frac{z-u}{r})
  \frac{\ud z\ud x}{r^{n}}\Big\|_{L^2(\dydt)} \lesssim 1.
\end{equation}

{\it Proof of (\ref{t1-claim1})}:
Using (2) and noting that 
we have $\phi\big(\frac{|y-x|}{t}\big) =0$ for $y\not\in B(x,t)$,
\begin{equation*}\begin{split}
  &\int  _{0} ^{\infty} \int  _{\R^{n}}
   \Big|\phi\big(\frac{|y-x|}{t}\big)k_{t}(y,z)\Big|^{2}\frac{\ud y\ud t}{t^{n+1}}\\
  &\lesssim \int  _{0} ^{|x-z|} \int  _{B(x,t)} \Big|
    \frac{t^{\alpha}}{(|x-z|+t-|y-x|)^{n+\alpha}}\Big|^{2}\frac{\ud y\ud t}{t^{n+1}}
 + \int  _{|x-z|} ^{\infty} \int_{B(x,t)}\frac{\ud y\ud t}{t^{3n+1}}\\
  &\lesssim \int  _{0} ^{|x-z|} \frac{t^{2\alpha-1}}{|x-z|^{2n+2\alpha}}\ud t
   + \int  _{|x-z|} ^{\infty} \frac{\ud t}{t^{2n+1}} \lesssim |x-z|^{-2n}.
\end{split}\end{equation*}
{\it Proof of (\ref{t1-claim2})}: Using (2) and
the mean value theorem and reasoning as above,  for $x,x',z$ satisfying $|x-z|> 2|x-x'|$  we have 
\begin{equation*}\begin{split}
  \int  _{0} ^{\infty} \int  _{\R^{n}}
  &    \Big|\Big(\phi\big(\frac{|y-x|}{t}\big)-\phi\big(\frac{|y-x'|}{t}\big)\Big)k_{t}(y,z)\Big|^{2}
    \frac{\ud y\ud t}{t^{n+1}} \\
 &\lesssim \int  _{0} ^{\infty} \int  _{B(x,t)} 
    \Big(\frac{|x-x'|t^{\alpha}}{t(|y-z|+t)^{n+\alpha}}\Big)^{2}
    \frac{\ud y\ud t}{t^{n+1}} + \text{similar} \\
 & \lesssim \int  _{0} ^{|x-z|} \int  _{B(x,t)} 
  \Big(\frac{|x-x'|t^{\alpha}}{t(|x-z|+t-|y-x|)^{n+\alpha}}\Big)^{2}
  \frac{\ud y\ud t}{t^{n+1}} \\
 & \qquad + \int  _{|x-z|} ^{\infty} |x-x'|^{2}\frac{\ud t}{t^{2n+3}} + \text{similar} \\
 & \lesssim \int  _{0} ^{|x-z|} \frac{t^{2\alpha-3}|x-x'|^{2}}{|x-z|^{2n+2\alpha}}dt
   + \frac{|x-x'|^{2}}{|x-z|^{2n+2}} + \text{similar} \\
 &\lesssim \frac{|x-x'|^{2}}{|x-z|^{2n+2}},
\end{split}\end{equation*}
where the words ``similar'' above refer to a copy of the other terms appearing in the same step, with all the occurences of \(x\) and \(x'\) interchanged.

{\it Proof of (\ref{t1-claim3})}: Using (3),  for $x,z,z'$ satisfying $|x-z|>
2|z-z'|$ we have 
\begin{equation*}\begin{split}
  &\int  _{0} ^{\infty} \int  _{\R^{n}} 
  \Big|\phi\big(\frac{|y-x|}{t}\big)\big(k_{t}(y,z)-k_{t}(y,z')\big)\Big|^{2}\frac{\ud y\ud t}{t^{n+1}}\\
  & \lesssim  \int  _{0} ^{\infty} \int  _{B(x,t)}
  \Big(\frac{t^{\beta}|z-z'|}{(|z-y|+t)^{n+1+\beta}}\Big)^{2}\frac{\ud y\ud t}{t^{n+1}} \\
  & \lesssim  \int  _{0} ^{|x-z|} \int  _{B(x,t)}
  \Big(\frac{t^{\beta}|z-z'|}{(|z-x|+t-|y-x|)^{n+1+\beta}}\Big)^{2}\frac{\ud y\ud t}{t^{n+1}} 
    + \int  _{|x-z|} ^{\infty} \frac{|z-z'|^{2}}{t^{2n+3}}\ud t \\
  & \lesssim  \int  _{0} ^{|x-z|}
  \frac{t^{2\beta-1}|z-z'|^{2}}{|z-x|^{2n+2+2\beta}}\ud t
  + \int  _{|x-z|} ^{\infty} \frac{|z-z'|^{2}}{t^{2n+3}}\ud t \lesssim \frac{|z-z'|^{2}}{|x-z|^{2n+2}}.
\end{split}\end{equation*}

{\it Proof of (\ref{t1-claim4})}: Using the Cauchy-Schwarz inequality and (1) 
we have 
\begin{equation*}\begin{split}
   \int  _{0} ^{\infty} \int  _{\R^{n}} &
  \Big|\int _{\R^{n}} \int _{\R^{n}} 
  \phi\big(\frac{|y-x|}{t}\big)k_{t}(y,z)\eta\big(\frac{x-u}{r}\big)
  \widetilde{\eta}\big(\frac{z-u}{r}\big)\frac{\ud z\ud x}{r^{n}}\Big|^{2}
  \frac{\ud y\ud t}{t^{n+1}}\\
  & \lesssim \frac1{r^{n}} \int  _{0} ^{\infty} \int  _{\R^{n}} \int  _{\R^{n}} 
  \Big|\phi\big(\frac{|y-x|}{t}\big)\int  _{\R^{n}}
  k_{t}(y,z)\widetilde{\eta}\big(\frac{z-u}{r}\big)\ud z\Big|^{2}\frac{\ud y\ud t\ud x}{t^{n+1}} \\
  & \lesssim
  \frac1{r^n}\big\|S\big(\widetilde{\eta}\big(\frac{\cdot-u}{r}\big)\big)\big\|^{2} _{T^{2,2}} 
  \lesssim \|\widetilde{\eta}\|^{2}_{L^{2}} \lesssim 1.
\end{split}\end{equation*}
This concludes the proof.
\end{proof}

\section{Off-diagonal estimates and their consequences}
\label{sect:od}

We start by recalling some terminology.
\begin{definition}
Let $M,t>0$ and $H$ a Hilbert space. An operator $T \in \mathcal{L}(L^{2}(\R^{n},H))$ is said to have 
{\em off-diagonal estimates of order $M$ at the scale of $t$}
 if there is a constant $C$ such that 
\begin{equation*}
  \n T f\n_{L^2(E;H)} \leq C \lb d(E,F)/t\rb^{-M}\n f\n_{L^2(F;H)}
\end{equation*}
for all Borel sets $E,F\subseteq \R^n$ and all $f\in L^2(\R^{n};H)$ with
support in $F$. Here, $\lb a\rb = 1+|a|$ and $d(E,F) = \inf\{|x-y|:\ x\in E, \ y\in F\}$.
The set of such operators is denoted by $OD_{t}(M)$.
\end{definition}

Note that a single operator belongs to \(OD_t(M)\) if and only if it belongs to \(OD_s(M)\) whenever \(s,t>0\). However, the related constant \(C\) will typically not be the same. The scale of the off-diagonal estimates becomes very relevant when we want uniformity in the constants for a family of bounded operators. Thus we say that \((T_z)_{z\in\Sigma}\subseteq L^2(H)\), where \(\Sigma\subseteq\C\), satisfies \emph{off-diagonal estimates of order $M$} if $T_z \in OD_{|z|}(M)$ for all $z\in\Sigma$ with the same constant $C$.


\begin{theorem} \label{tentbdd}
Let $1< p<\infty$, $H$ be a Hilbert space, $X$ be a UMD Banach space, and \(L^{p}(X)\) have type \(\tau\).
Let $(T_t)_{t>0}$ be a uniformly bounded family of operators on $L^2(H)$ satisfying
off-diagonal estimates of order $M$ for some $M > n/\tau$.
Then the operator $T$, defined on $C_c(H)\otimes X$ by
$$ T(g\otimes \xi)(y,t) := T_t (g(\cdot,t))(y) \otimes \xi,$$
extends uniquely to a bounded linear operator on $T^{p,2}(H;X)$.
\end{theorem}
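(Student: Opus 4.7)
The strategy is to decompose the input dyadically in space at each scale $t$, exploit the off-diagonal estimates of order $M$ to get geometric decay for each annular piece, and then pay an aperture-change cost via Theorem~\ref{thm:angle} before summing. I would work with $f\in C_c(H)\otimes X$ and extend by density at the end.

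Set $F_0(x,t):=B(x,2t)$ and $F_k(x,t):=B(x,2^{k+1}t)\setminus B(x,2^kt)$ for $k\geq 1$. Using the identification \eqref{eq:integral_operator}, write
\[
  J(Tf)(x,y,t)=1_{B(x,t)}(y)\,T_t\big(f(\cdot,t)\big)(y)=\sum_{k\geq 0}G_k(x,y,t),
\]
where $G_k(x,y,t):=1_{B(x,t)}(y)\,T_t(1_{F_k(x,t)}f(\cdot,t))(y)$. By the triangle inequality in $L^p(\gamma(L^2(\dydt;H),X))$, it suffices to show $\sum_k\|G_k\|_{L^p(\gamma)}\lesssim\|f\|_{T^{p,2}(H;X)}$ with geometric control in $k$.

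The core step is a pointwise-in-$x$ covariance domination (Proposition~\ref{prop:CD}) against $J_{2^{k+1}}f(x)\in\gamma(L^2(\dydt;H),X)$. Writing $f=\sum_j h_j\otimes\xi_j$ and $\widetilde h_{\xi\s}:=\sum_j\langle\xi_j,\xi\s\rangle h_j$ for $\xi\s\in X\s$, the $X\s$-adjoints are the explicit $L^2(\dydt;H)$-functions
\[
  G_k(x)\s\xi\s=1_{B(x,t)}(y)\,T_t(1_{F_k(x,t)}\widetilde h_{\xi\s}(\cdot,t))(y),\qquad J_{2^{k+1}}f(x)\s\xi\s=1_{B(x,2^{k+1}t)}(y)\,\widetilde h_{\xi\s}(y,t).
\]
Since $d(B(x,t),F_k(x,t))\geq 2^{k-1}t$ for $k\geq 1$ while $F_k(x,t)\subset B(x,2^{k+1}t)$, applying the off-diagonal estimate of order $M$ at each fixed $t$ together with Fubini yields
\[
  \|G_k(x)\s\xi\s\|_{L^2(\dydt;H)}\lesssim 2^{-kM}\|J_{2^{k+1}}f(x)\s\xi\s\|_{L^2(\dydt;H)},
\]
with the factor $2^{-kM}$ replaced by the uniform $L^2$-bound when $k=0$. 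Proposition~\ref{prop:CD} then upgrades this to $\|G_k(x)\|_{\gamma}\lesssim 2^{-kM}\|J_{2^{k+1}}f(x)\|_{\gamma}$ for a.e.\ $x$, so taking $L^p$-norms in $x$ gives $\|G_k\|_{L^p(\gamma)}\lesssim 2^{-kM}\|f\|_{T^{p,2}_{2^{k+1}}(H;X)}$.

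To finish, Theorem~\ref{thm:angle} bounds $\|f\|_{T^{p,2}_{2^{k+1}}(H;X)}\lesssim (k+1)2^{(k+1)n/\tau}\|f\|_{T^{p,2}(H;X)}$, so the $k$-th summand is controlled by $(k+1)2^{-k(M-n/\tau)}\|f\|_{T^{p,2}(H;X)}$; this is summable precisely when $M>n/\tau$, which is the hypothesis. The main obstacle is the covariance-domination step: one must transfer a scalar $L^2$-type estimate at the level of $X\s$-adjoints into a $\gamma$-norm bound suitable for the $L^p(\gamma)$-norm, and then correctly match the resulting $T^{p,2}_{2^{k+1}}$-norms with the $T^{p,2}$-norm through the aperture change of Theorem~\ref{thm:angle}. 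Once these identifications are in place, the off-diagonal decay and the (sharp up to logs) aperture-change constants combine to explain exactly why the threshold $M>n/\tau$ appears.
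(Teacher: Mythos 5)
Your proof is correct and follows essentially the same route as the paper's: the identical annular decomposition $F_k(x,t)=B(x,2^{k+1}t)\setminus B(x,2^kt)$ (named $C_m$ in the paper), pointwise-in-$x$ covariance domination via Proposition~\ref{prop:CD} against $J_{2^{k+1}}f(x)$, and the aperture-change bound of Theorem~\ref{thm:angle} to produce the geometric factor $(k+1)2^{-k(M-n/\tau)}$, which is summable precisely under $M>n/\tau$. The only cosmetic difference is notational; the argument and the chain of lemmas invoked are the same.
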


\begin{proof}
Let us consider a function \(f = \sum \limits _i  g_i \otimes \xi_i \in C_c(H)\otimes X\).
We define the sets
\begin{equation*}
\begin{split}
  C_0(x,t) &:= B(x,2t),\\ C_m(x,t) &:= B(x,2^{m+1}t)\setminus B(x,2^m,t),\quad m=1,2,\ldots,
\end{split}\end{equation*}
so that there is a disjoint union \(\bigcup_{m=0}^{\infty} C_m(x,t)=\R^n\). Let \((u_m)_{m=0}^{\infty}\) be the functions
\begin{equation*}
  u_m:x\mapsto\big[(y,t)\mapsto 1_{B(x,t)}(y)T_t\big(1_{C_m(x,t)}f(\cdot,t)\big)(y)\big],
\end{equation*}
where
\begin{equation*}
  T_t\big(1_{C_m(x,t)}f(\cdot,t)\big)(y) := \sum \limits _i T_t (1_{C_m(x,t)}g_i(\cdot,t))(y) \otimes \xi_i.
\end{equation*}
We then have the formal expansion
\(  J(Tf) = \sum_{m=0}^{\infty}u_m \),
and for a fixed \(x\in\R^n\), we separately estimate the \(\gamma(L^2(\dydt;H),X)\)-norms of each \(u_m(x)\).

Fix $\xi^*\in X^*$, and denote by $|\cdot|$ the norm on $H$. Let us also write 
$\lb f(y,t),\xi^* \rb := \sum \limits _i g_i(y,t) \lb \xi_i, \xi^* \rb$.
For $m=0$ we estimate, using the uniform boundedness of the operators $T_t$ on
$L^2(H)$,
\begin{equation*}\begin{split}
  \| u_0(x)^*\xi^*\|_{L^2(\dydt;H)} ^{2}
  &=\int_{\R_+^{n+1}} 1_{B(x,t)}(y) \big|T_t \big(1_{B(x,2t)}\lb f(\cdot,t),\xi\s \rb\big)(y)\big|^2\,\frac{\ud y\ud t}{t^{n+1}}\\
  &\lesssim  \int_{\R_+^{n+1}} 1_{B(x,2t)}(y)|\lb f(y,t) ,\xi\s\rb|^2\,\frac{\ud y\ud t}{t^{n+1}}.
\end{split}\end{equation*}
Hence, by covariance domination (Proposition \ref{prop:CD}),
\begin{equation*}
 \| u_0(x) \|_{\gamma(L^2(\dydt;H),X)}
 \lesssim \| (y,t)\mapsto 1_{B(x,2t)}(y)f(y,t)\|_{\gamma(L^2(\dydt;H),X)},
\end{equation*}
and we conclude that
\begin{equation*}
 \| u_0\|_{L^p(\g(L^2(\dydt;H),X))}
  \lesssim\|f\|_{T^{p,2}_2(H;X)}\lesssim \|f\|_{T^{p,2}(H;X)}.
\end{equation*}

For \(m\geq 1\), the off-diagonal estimates of order \(M\) imply
\begin{equation*}\begin{split}
  \| u_{m}(x)^*\xi^*\|_{L^2(\dydt;H)} ^{2}
  & =\int_{\R_+^{n+1}} 1_{B(x,t)}(y)\big|T_t \big(1_{C_m(x,t)}\lb f(\cdot,t),\xi\s \rb\big)(y)\big|^2
\,\frac{\ud y\ud t}{t^{n+1}}\\
& \leq 2^{-2mM} \int_{\R_+^{n+1}} 1_{B(x,2^{m+1}t)}(y)|\lb f(y,t),\xi\s\rb\big|^2\,\frac{\ud y\ud t}{t^{n+1}}. 
\end{split}\end{equation*}
Hence, by covariance domination,
\begin{equation*}
  \n u_m(x) \n_{\g(L^2(\dydt;H),X)}
  \lesssim 2^{-mM} \n (y,t)\mapsto 1_{B(x,2^{m+1}t)}(y)f(y,t)\n_{\g(L^2(\dydt;H),X)},
\end{equation*} 
and from Theorem \ref{thm:angle} we conclude that
\begin{equation*}
  \n u_m\n_{L^p(\gamma(L^2(\dydt;H),X))} \\
  \lesssim 2^{-mM} \|f\|_{T^{p,2}_{2^{m+1}}(H;X)}
  \lesssim 2^{-mM}\cdot m\cdot 2^{mn/\tau}\|f\|_{T^{p,2}(H;X)}.
\end{equation*}
Keeping in mind that $M>n/\tau$, we may sum over $m$ to see that the formal expansion \(J(Tf)=\sum_{m=0}^{\infty}u_m\) converges absolutely in \(L^p(\gamma(L^2(\dydt;H),X))\), and we obtain the desired result.
\end{proof}

\begin{remark}
The $T^{p,2}(H;X)$-boundedness of the operator $T$ as 
considered above can be seen as a (\(p\) and \(X\) dependent) property of the (parameterised) operator family $(T_t)_{t>0}\subset\calL(L^2(H))$. 
Let us call this property \emph{tent-boundedness}. A simple example of a tent-bounded family 
consists of the translations $T_tf(x)=f(x+t y)$, where $y$ is 
some unit vector. Indeed, these are obviously uniformly bounded 
in $L^2$ (and in $L^p$ as well) 
and satisfy off-diagonal estimates of any order. 
In contrast to this, even when \(X=\C\), it is well known that this family is not $\gamma$-bounded in 
$L^p$ unless $p=2$.
\end{remark}



We next consider operators of the form
\[
 (Tf)_t := \int_{0}^{\infty} T_{t,s}f_s \frac{\ud s}{s},\qquad
  f\in C_c(H) \otimes X,
\]
where \(T_{t,s}\in\calL(L^2(H))\).
This is first done separately for upper and lower diagonal ``kernels'' $T_{t,s}$.

\begin{proposition}\label{prop:upper}
Let $1<p<\infty$, $H$ be a Hilbert space, $X$ be a UMD space, and let $L^p(X)$ have type $\tau$.
Let $(U_{t,s})_{0<t\leq s<\infty}$ be a uniformly bounded family of operators on 
$L^2(H)$ such that $(U_{t,s})_{s\geq t}\in OD_s(M)$ uniformly in $t$ for some $M>n/\tau$. Let further $\alpha>n/2$. Then
\[
  (UF)_t =\int_t^{\infty} \big(\frac{t}{s}\big)^{\alpha}U_{t,s}F_s\frac{\ud s}{s}
\]
extends to a bounded operator on $T^{p,2}(H;X)$.
\end{proposition}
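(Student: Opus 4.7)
The plan is to reduce the claim to Theorem~\ref{tentbdd} via the multiplicative change of variables \(s=ut\), which exposes the ratio \(u=s/t\geq 1\) as the natural parameter. Substituting \(s=ut\) in the definition of \(U\) yields
\[
  (UF)_t=\int_1^\infty u^{-\alpha}\,U_{t,ut}\bigl(F(\cdot,ut)\bigr)\,\frac{\ud u}{u}=\int_1^\infty (M^uF)_t\,\frac{\ud u}{u},
\]
where \((M^uF)(y,t):=u^{-\alpha}U_{t,ut}(F(\cdot,ut))(y)\). Working first on the dense subclass \(C_c(H)\otimes X\) and using Minkowski's integral inequality in \(T^{p,2}(H;X)\), it is enough to establish
\[
  \|M^uF\|_{T^{p,2}(H;X)}\lesssim u^{n/2-\alpha}\,\|F\|_{T^{p,2}(H;X)},\qquad u\geq 1,
\]
with an implicit constant independent of \(u\); the assumption \(\alpha>n/2\) then makes \(u^{n/2-\alpha-1}\) integrable on \([1,\infty)\), which gives the conclusion.

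The crux is the scaling identity
\[
  \|M^uF\|_{T^{p,2}(H;X)}=u^{n/2-\alpha}\,\|V^uF\|_{T^{p,2}_{1/u}(H;X)},
\]
where \(V^u\) is the operator built, in the sense of Theorem~\ref{tentbdd}, from the pointwise family \(V^u_{\tilde t}:=U_{\tilde t/u,\tilde t}\). One obtains it by substituting \(\tilde t=ut\) in the \(L^2(\dydt;H)\)-integral defining the tent norm: the measure transforms as \(\frac{\ud y\,\ud t}{t^{n+1}}=u^n\frac{\ud y\,\ud\tilde t}{\tilde t^{n+1}}\), the cut-off \(1_{B(x,t)}(y)\) becomes \(1_{B(x,\tilde t/u)}(y)\), the factor \(u^{-\alpha}\) is extracted, and \(U_{t,ut}(F(\cdot,ut))\) becomes \((V^uF)(\cdot,\tilde t)\). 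The resulting integrand is precisely \(J_{1/u}(V^uF)(x)\). Crucially, since \(1/u\leq 1\) the trivial inclusion \(\|V^uF\|_{T^{p,2}_{1/u}(H;X)}\leq \|V^uF\|_{T^{p,2}(H;X)}\) is available without any appeal to Theorem~\ref{thm:angle}, which would otherwise introduce a spurious factor \(u^{n/\tau}\) fatal to the balance with \(u^{-\alpha}\).

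It remains to bound \(\|V^uF\|_{T^{p,2}(H;X)}\) uniformly in \(u\geq 1\). For each \(u\geq 1\) the family \((V^u_{\tilde t})_{\tilde t>0}\) is uniformly bounded on \(L^2(H)\), because \((\tilde t/u,\tilde t)\) lies in the admissible range \(0<t\leq s\); and it satisfies the assumed off-diagonal estimates of order \(M>n/\tau\) at scale \(s=\tilde t\), which is exactly the scale of the tent parameter, with constants independent of \(u\). Theorem~\ref{tentbdd} therefore gives \(\|V^uF\|_{T^{p,2}(H;X)}\lesssim\|F\|_{T^{p,2}(H;X)}\) uniformly in \(u\), and integrating \(u^{n/2-\alpha-1}\,\ud u\) over \([1,\infty)\) completes the proof. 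The main technical obstacle is the scaling identity itself in the vector-valued \(\gamma\)-framework: one must check that pre-composition with the \(\tilde t\mapsto\tilde t/u\) dilation on \(L^2(\dydt;H)\) induces an isomorphism of \(\gamma(L^2(\dydt;H),X)\) scaling by the scalar factor \(u^{n/2}\). This follows from the invariance of the \(\gamma\)-norm under \(L^2\)-isometries via the orthonormal-basis definition, but it deserves explicit verification before the formal manipulations above are justified.
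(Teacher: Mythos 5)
Your proof is correct, and it takes a genuinely different route from the paper's. The paper attacks the double integral directly: it decomposes in spatial annuli $C_k(x,s)$, applies Cauchy--Schwarz to separate the $s$-integral, interchanges integration in $t$ and $s$, and then sums over $k$, paying the $\alpha^{n/\tau}$ cost from Theorem~\ref{thm:angle} along the way. You instead change variables to the scale ratio $u=s/t$, reducing $U$ to a continuous superposition of single-scale operators $V^u$; for each fixed $u$ you invoke Theorem~\ref{tentbdd} as a black box (which is where the annulus decomposition and the hypothesis $M>n/\tau$ are actually consumed), and the convergence of the superposition over $u\in[1,\infty)$ is paid for by $\alpha>n/2$ via the Jacobian factor $u^{n/2}$ alone. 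The key observation that makes the bookkeeping work is that the change of variables puts you in a \emph{smaller} cone of aperture $1/u\leq 1$, so the comparison $\|\cdot\|_{T^{p,2}_{1/u}}\leq\|\cdot\|_{T^{p,2}}$ follows for free by covariance domination (Proposition~\ref{prop:CD}) -- you correctly note that using Theorem~\ref{thm:angle} there would be fatal. This is a clean modular reduction; by contrast the paper's argument is self-contained in that it doesn't presuppose Theorem~\ref{tentbdd}.

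Two small points of care. First, the scaling of $\gamma$-norms you flag at the end is indeed the crux and is easily verified: the dilation $D_u\phi(y,t)=\phi(y,ut)$ satisfies $D_u^*D_u=u^n I$ on $L^2(\dydt;H)$, so $D_u=u^{n/2}\Phi_u$ with $\Phi_u$ unitary; since $J(M^uF)(x)=u^{-\alpha}\,J_{1/u}(V^uF)(x)\circ D_u$, invariance of the $\gamma$-norm under precomposition with unitaries gives exactly the claimed factor $u^{n/2-\alpha}$. Second, the interchange $\|UF\|_{T^{p,2}}\leq\int_1^\infty\|M^uF\|_{T^{p,2}}\,\ud u/u$ requires $u\mapsto J(M^uF)$ to be Bochner measurable and integrable in $L^p(\gamma(L^2(\dydt;H),X))$, and that each $J(M^uF)$ lands in the range of the projection $N$; both are routine for $F\in C_c(H)\otimes X$ once the quantitative bound is in hand, and the paper glosses over the analogous point with "the formal expansion converges absolutely". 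So the argument is sound as written.
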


\begin{proof}
Let $F\in C_c(H)\otimes X$ be arbitrary and fixed. 
It suffices to estimate the norm of
the functions $u_k\in L^p(\gamma(L^2(\dydt;H),X))$ defined by
$$
  u_k:x\mapsto \big[(y,t)\mapsto
   1_{B(x,t)}\int_t^{\infty}\big(\frac{t}{s}\big)^{\alpha}U_{t,s}(1_{C_k(x,s)}F_s)(y)\frac{\ud s}{s}\big],
  \quad k=0,1,\ldots,
$$
where $C_0(x,s):=B(x,2s)$, and $C_k(x,s):=B(x,2^{k+1}s)\setminus B(x,2^k s)$ for $k\geq 1$.

Let $x\in\R^n$ be fixed for the moment. To estimate the relevant $\gamma(L^2(\dydt;H),X)$-norm at this point, we wish to use the covariance domination. Hence let $\xi^*\in X^*$, write 
$f_s:=\lb F_s(\cdot),\xi^*\rb \in L^2(H)$ for short, and consider the quantity
\[
   \lb (u_k(x))(y,t),\xi^*\rb
  = 1_{B(x,t)}\int_t^{\infty}\big(\frac{t}{s}\big)^{\alpha}U_{t,s}(1_{C_k(x,s)}f_s)(y)\frac{\ud s}{s} \in H.
\]
Its norm in $L^2(\dydt;H)$ is dominated by
\[\begin{split}
  &\Big(\int_0^{\infty}\Big[\int_t^{\infty}
    \big(\frac{t}{s}\big)^{\alpha}\|1_{B(x,t)}U_{t,s}(1_{C_k(x,s)}f_s)\|_{L^2(H)}\frac{\ud s}{s}\Big]^2
   \frac{\ud t}{t^{n+1}}\Big)^{1/2} \\
  &\leq\Big(\int_0^{\infty}\!\!\Big[\int_t^{\infty}\big(\frac{t}{s}\big)^{2\epsilon}\frac{\ud s}{s}\Big]
    \Big[\int_t^{\infty}\!\!\big(\frac{t}{s}\big)^{2(\alpha-\epsilon)}
    \|1_{B(x,t)}U_{t,s}(1_{C_k(x,s)}f_s)\|_{L^2(H)}^2\frac{\ud s}{s}\Big]
   \frac{\ud t}{t^{n+1}}\Big)^{1/2} \\
  &\lesssim\Big(\int_0^{\infty}
    \int_t^{\infty}\big(\frac{t}{s}\big)^{2(\alpha-\epsilon)}
    \big(2^{-kM}\|1_{B(x,2^{k+1}s)}f_s\|_{L^2(H)}\big)^2\frac{\ud s}{s}
   \frac{\ud t}{t^{n+1}}\Big)^{1/2} \\
  &\eqsim 2^{-kM}\Big(\int_0^{\infty}
       \|1_{B(x,2^{k+1}s)}f_s\|_{L^2(H)}^2
   \frac{\ud s}{s^{n+1}}\Big)^{1/2},
\end{split}\]
where in the last step we exchanged the order of integration and integrated out the $t$ variable; the convergence required that $2(\alpha-\epsilon)>n$, which holds for sufficiently small $\epsilon>0$, since $\alpha>n/2$.

The right-hand side of our computation is $2^{-kM}$ times the $L^2(\dydt;H)$-norm of $1_{B(x,2^{k+1}s)}\lb F_s(y),\xi^*\rb$, 
so that covariance domination gives us
\[
  \|u_k(x)\|_{\gamma(L^2(\dydt;H),X)}\lesssim 2^{-kN}\|(J_{2^{k+1}}F)(x)\|_{\gamma(L^2(\dydt;H),X)}.
\]
Taking $L^p$-norms and using Theorem~\ref{thm:angle} yields
\[
  \|u_k\|_{L^p(\gamma(L^2(\dydt;H),X))}
  \lesssim 2^{-kM}\|F\|_{T^{p,2}_{2^{k+1}}(H;X)}
  \lesssim 2^{-kM}(1+k)2^{kn/\tau}\|F\|_{T^{p,2}(H;X)}.
\]
Recalling that $M>n/\tau$, we find that the formal expansion $J(UF)=\sum_{k=0}^{\infty}u_k$ converges absolutely in $L^p(\gamma(L^2(\dydt;H),X))$, and we obtain the desired estimate $\|UF\|_{T^{p,2}(X)}\lesssim\|F\|_{T^{p,2}(X)}$. 
\end{proof}

\begin{proposition}\label{prop:lower}
Let $1<p<\infty$, $H$ be a Hilbert space, $X$ be a UMD space, and let $L^p(X)$ have type $\tau$.
Let $(L_{t,s})_{0<s\leq t<\infty}$ be a uniformly bounded family of operators on 
$L^2(H)$ 
such that $(L_{t,s})_{t \geq s}\in OD_t(N)$ uniformly in $s$  for some $N>n/\tau$. Let further $\beta>n(1/\tau-1/2)$. Then
\[
  (LF)_t =\int_0^t \big(\frac{s}{t}\big)^{\beta}L_{t,s}F_s\frac{\ud s}{s}
\]
extends to a bounded operator on $T^{p,2}(H;X)$.
\end{proposition}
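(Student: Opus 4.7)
My plan is to follow the proof of Proposition~\ref{prop:upper} as closely as possible, adapting it to the fact that the off-diagonal estimates are here anchored at the larger scale $t$ rather than the smaller scale $s$. I would write $J(LF)(x)=\sum_{k\geq 0}u_k(x)$, where
\[
  u_k(x):(y,t)\mapsto 1_{B(x,t)}(y)\int_0^t\big(\tfrac{s}{t}\big)^{\beta}L_{t,s}(1_{C_k(x,t)}F_s)(y)\,\tfrac{\ud s}{s},
\]
with the usual annuli $C_0(x,t):=B(x,2t)$ and $C_k(x,t):=B(x,2^{k+1}t)\setminus B(x,2^k t)$ for $k\geq 1$. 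Note crucially that the annuli are defined at scale $t$, not $s$, so that the off-diagonal estimate $\|1_{B(x,t)}L_{t,s}(1_{C_k(x,t)}\cdot)\|_{L^2(H)\to L^2(H)}\lesssim 2^{-kN}$ is directly available.

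For fixed $x$ and $\xi^*\in X^*$, putting $f_s:=\lb F_s,\xi^*\rb$ and applying Cauchy--Schwarz to the inner $s$-integral with weight $(s/t)^{2\epsilon}$ for small $\epsilon>0$, followed by Fubini, reduces matters to
\[
  \|u_k(x)^*\xi^*\|_{L^2(\dydt;H)}^2\lesssim\tfrac{2^{-2kN}}{\epsilon}\int_0^\infty\int_s^\infty\big(\tfrac{s}{t}\big)^{2(\beta-\epsilon)}\|1_{C_k(x,t)}f_s\|_{L^2(H)}^2\,\tfrac{\ud t}{t^{n+1}}\,\tfrac{\ud s}{s}.
\]
The main technical step will be the inner $t$-integral. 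For fixed $y$ with $|y-x|=r$, the condition $y\in C_k(x,t)$ confines $t$ to the interval $(r/2^{k+1},r/2^k)$, which intersects $(s,\infty)$ only when $r>2^k s$; an explicit calculation then gives $\int_s^\infty (s/t)^{2(\beta-\epsilon)}1_{y\in C_k(x,t)}\,\ud t/t^{n+1}\lesssim s^{-n}(2^k s/r)^{\delta}1_{r>2^k s}$ with $\delta:=2(\beta-\epsilon)+n$. A dyadic decomposition of $\{y:|y-x|>2^k s\}$ into annuli at scale $2^k s$ then produces
\[
  \|u_k(x)^*\xi^*\|_{L^2(\dydt;H)}^2\lesssim 2^{-2kN}\sum_{j\geq 0}2^{-j\delta}\|(J_{2^{k+j+1}}F)^*\xi^*\|_{L^2(\dydt;H)}^2.
\]

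To upgrade this scalar bound to the $\gamma$-norm, I would use covariance domination (Proposition~\ref{prop:CD}) through an $\ell^2$-indexed direct-sum construction: regard $(2^{-j\delta/2}J_{2^{k+j+1}}F(x))_j$ as a single operator $L^2(\dydt;H)\otimes\ell^2\to X$, whose $\gamma$-norm is controlled by the triangle inequality in $L^2(\Omega;X)$. This yields $\|u_k(x)\|_\gamma\lesssim 2^{-kN}\sum_{j\geq 0}2^{-j\delta/2}\|J_{2^{k+j+1}}F(x)\|_\gamma$. Taking $L^p(\ud x)$-norms and invoking Theorem~\ref{thm:angle} gives
\[
  \|u_k\|_{L^p(\gamma)}\lesssim 2^{-kN}\sum_{j\geq 0}2^{-j\delta/2}(1+k+j)\,2^{(k+j+1)n/\tau}\|F\|_{T^{p,2}(H;X)}.
\]
Since $\beta>n(1/\tau-1/2)$, choosing $\epsilon>0$ small enough makes $\delta/2-n/\tau>0$, so the $j$-series converges and produces $\|u_k\|_{L^p(\gamma)}\lesssim(1+k)2^{-k(N-n/\tau)}\|F\|_{T^{p,2}(H;X)}$. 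Finally, $N>n/\tau$ ensures summability in $k$, and the formal expansion $J(LF)=\sum_k u_k$ converges absolutely in $L^p(\gamma(L^2(\dydt;H),X))$ with the desired norm bound.

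The step I expect to be the most delicate is the inner $t$-integral together with the ensuing dyadic resummation in $j$: in Proposition~\ref{prop:upper} the analogous computation collapses to a single term because the localization $1_{B(x,2^{k+1}s)}$ there does not depend on $t$, whereas the genuine $t$-dependence of $C_k(x,t)$ obstructs a clean single-term bound and forces both the extra sum over $j$ and the direct-sum covariance-domination trick to repackage the result as an angle-modified tent-space norm suited to Theorem~\ref{thm:angle}.
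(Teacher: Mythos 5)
Your proof is correct and the end result matches, but the route is genuinely different from the paper's. The paper performs a \emph{double} dyadic decomposition up front, writing $J(LF)=\sum_{k,m\geq 0}v_{k,m}$, where $k$ indexes the spatial annuli $C_k(x,t)$ (anchored at the larger scale $t$, as you do) and $m$ indexes dyadic shells $s\in(2^{-(m+1)}t,2^{-m}t]$ in the integration variable. Because each $v_{k,m}$ localises both variables, one obtains directly a single-term covariance-domination bound $\|v_{k,m}(x)\|_{\gamma}\lesssim 2^{-m(\beta+n/2)}2^{-kN}\|(J_{2^{k+m+2}}F)(x)\|_{\gamma}$ (using $1_{B(x,2^{k+1}t)}\leq 1_{B(x,2^{k+m+2}s)}$ on that shell, plus the Jacobian factor $2^{-mn/2}$), and the summation is then immediate. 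You instead keep the $s$-integral intact, apply Cauchy--Schwarz with a small auxiliary parameter $\epsilon$, explicitly integrate out $t$ over the $t$-window forced by $y\in C_k(x,t)$, and then dyadically decompose the resulting spatial weight $(2^k s/r)^{\delta}$ into shells indexed by $j$. This produces an $\ell^2$-sum on the dominating side, which is why you need the direct-sum covariance-domination construction on $L^2(\dydt;H)\otimes\ell^2$ to pass to the $\gamma$-norm; the paper's organisation sidesteps that step entirely at the cost of one extra summation index. Both approaches yield the same effective decay (your $\delta/2=\beta+n/2-\epsilon$ versus the paper's $\beta+n/2$) and close under the same hypotheses $\beta>n(1/\tau-1/2)$, $N>n/\tau$. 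One small inaccuracy to fix: your bound $I_k(y)\lesssim s^{-n}(2^ks/r)^{\delta}1_{\{r>2^ks\}}$ is exact for $k\geq 1$ (where the $t$-window $(r/2^{k+1},r/2^k]$ really is disjoint from $[s,\infty)$ when $r\leq 2^ks$), but fails for $k=0$ and $r<s$: there $C_0(x,t)=B(x,2t)$ gives a half-line $t$-window and $I_0(y)\eqsim s^{-n}$, while your right-hand side vanishes. This is harmless since the $j=0$ term $\|1_{B(x,2s)}f_s\|^2$ already absorbs the inner region, but the stated inequality should be $I_k(y)\lesssim s^{-n}\min\{1,2^ks/r\}^{\delta}$ to be uniformly valid.
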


\begin{proof}
The proof follows a similar approach as the previous one. This time, we expand $J(LF)$ in a double series $\sum_{k,m=0}^{\infty}v_{k,m}$, where
\[
  v_{k,m}:x\mapsto\big[(y,t)\mapsto
  \int_{2^{-(m+1)}t}^{2^{-m}t}\big(\frac{s}{t}\big)^{\beta}1_{B(x,t)}(y)
    L_{t,s}(1_{C_k(x,t)}F_s)(y)\frac{\ud s}{s}\big].
\]
Again, we wish to estimate the $\gamma(L^2(\dydt;H),X)$-norm of $v_{k,m}(x)$ by covariance domination, for which purpose we take $\xi^*\in X^*$, write $f_s:=\lb F_s(\cdot),\xi^*\rb$, and compute
\[\begin{split}
  &\|\lb v_{k,m}(x),\xi^*\rb\|_{L^2(\dydt;H)} \\
  &\leq\Big(\int_0^{\infty}\Big[\int_{2^{-(m+1)}t}^{2^{-m}t}2^{-m\beta}
    \|1_{B(x,t)}L_{t,s}(1_{C_k(x,t)}F_s)\|_{L^2(H)}\frac{\ud s}{s}\Big]^2\frac{\ud t}{t^{n+1}}\Big)^{1/2} \\
  &\lesssim 2^{-m\beta}\Big(\int_0^{\infty}\int_{2^{-(m+1)}t}^{2^{-m}t}
     \big(2^{-kN}\|1_{B(x,2^{k+1}t)}F_s\|_{L^2(H)}\big)^2\frac{\ud s}{s}\frac{\ud t}{t^{n+1}}\Big)^{1/2} \\
  &\lesssim 2^{-m(\beta+n/2)}2^{-kN}\Big(\int_0^{\infty}
     \|1_{B(x,2^{k+m+2}s)}F_s\|_{L^2(H)}^2
\frac{\ud s}{s^{n+1}}\Big)^{1/2}.
\end{split}\]
This is $2^{-m(\beta+n/2)}2^{-kN}$ times the $L^2(\dydt;H)$-norm of $1_{B(x,2^{k+m+2}s)}(y)\lb F_s(y),\xi^*\rb$; 
hence by covariance domination
\[
  \|v_{k,m}(x)\|_{\gamma(L^2(\dydt;H),X)}
  \lesssim 2^{-m(\beta+n/2)}2^{-kN}\|(J_{2^{k+m+2}}F)(x)\|_{\gamma(L^2(\dydt;H),X)}.
\]
Taking $L^p$-norms and using Theorem~\ref{thm:angle} we get
\[\begin{split}
  &\|v_{k,m}\|_{L^p(\gamma(L^2(\dydt;H),X))}
  \lesssim 2^{-m(\beta+n/2)}2^{-kN}\|F\|_{T^{p,2}_{2^{k+m+2}}(H;X)} \\
  &\lesssim 2^{-m(\beta+n/2)}2^{-kN}(1+k+m)2^{(k+m)n/\tau}\|F\|_{T^{p,2}(H;X)},
\end{split}\]
and we can sum up the series over $k$ and $m$ since $\beta+n/2>n/\tau$ and $N>n/\tau$.
\end{proof}

Combining the previous two propositions with a duality argument, we finally obtain:

\begin{theorem}\label{thm:Schur}
Let $1<p<\infty$, $H $ be a Hilbert space, $X$ be a UMD space, and let $L^p(X)$ have type $\tau$ and cotype $\gamma$.
Let $(T_{t,s})_{0<t,s<\infty}$ be a uniformly bounded family of operators on 
$L^2(H)$ such that:
\begin{enumerate}
  \item[$(i)$] $(T_{t,s})_{s>t}\in OD_s(M)$ uniformly in $t$,
  \item[$(ii)$] $(T_{t,s})_{t>s}\in OD_t(N)$ uniformly in $s$.
  \end{enumerate}
Then
$$
  (TF)_t =\int_0^{\infty}
   \min\big\{\big(\frac{t}{s}\big)^{\alpha},\big(\frac{s}{t}\big)^{\beta}\big\}
     T_{t,s}F_s\frac{\ud s}{s}
$$
extends to a bounded operator on $T^{p,2}(H;X)$ if at least one of the following four conditions is satisfied:
\begin{enumerate}
  \item[$(a)$] $M>n/\tau$, $\alpha>n/2$, $N>n/\tau$, and $\beta>n(1/\tau-1/2)$, 
  \item[$(b)$] $M>n/\tau$, $\alpha>n/2$, $N>n(1-1/\gamma)$, and $\beta>n/2$, 
  \item[$(c)$] $M>n(1-1/\gamma)$, $\alpha>n(1/2-1/\gamma)$, $N>n/\tau$, and $\beta>n(1/\tau-1/2)$, 
  \item[$(d)$] $M>n(1-1/\gamma)$, $\alpha>n(1/2-1/\gamma)$, $N>n(1-1/\gamma)$, and $\beta>n/2$.
\end{enumerate}
\end{theorem}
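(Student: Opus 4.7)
The plan is to split the operator into its upper and lower triangular parts,
\begin{equation*}
  (UF)_t := \int_t^{\infty} \Big(\frac{t}{s}\Big)^{\alpha} T_{t,s} F_s \frac{\ud s}{s},
  \qquad
  (LF)_t := \int_0^t \Big(\frac{s}{t}\Big)^{\beta} T_{t,s} F_s \frac{\ud s}{s},
\end{equation*}
so that $T=U+L$, and to bound each piece on $T^{p,2}(H;X)$ via one of two alternatives: either a direct application of Proposition~\ref{prop:upper} or~\ref{prop:lower}, or an application of the same proposition to the \emph{adjoint} operator acting on the dual space $T^{q,2}(H;X^*)$, where $1/p+1/q=1$. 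The four combinations of direct/dual for $U$ and for $L$ will produce exactly the cases $(a)$--$(d)$.

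The direct route is immediate. By hypothesis $(i)$ and Proposition~\ref{prop:upper}, $U$ is bounded whenever $M>n/\tau$ and $\alpha>n/2$; by hypothesis $(ii)$ and Proposition~\ref{prop:lower}, $L$ is bounded whenever $N>n/\tau$ and $\beta>n(1/\tau-1/2)$. Combining the two gives case $(a)$.

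For the dual route, I would equip $T^{p,2}(H;X)$ and $T^{q,2}(H;X^*)$ with the natural pairing $\langle F,G\rangle:=\int_0^{\infty}\langle F_t,G_t\rangle\frac{\ud t}{t}$ and compute by Fubini that
\begin{equation*}
  (U^*G)_s=\int_0^s\Big(\frac{t}{s}\Big)^{\alpha}T_{t,s}^*G_t\frac{\ud t}{t},
  \qquad
  (L^*G)_s=\int_s^{\infty}\Big(\frac{s}{t}\Big)^{\beta}T_{t,s}^*G_t\frac{\ud t}{t}.
\end{equation*}
After relabeling variables, $U^*$ is of lower-type form with exponent $\alpha$ and kernel $(T_{s,t}^*)_{s<t}$, while $L^*$ is of upper-type form with exponent $\beta$ and kernel $(T_{s,t}^*)_{s>t}$. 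A short duality argument (testing pairs of indicator functions) shows that adjoints preserve off-diagonal estimates with the same order and scale, so these rearranged kernels inherit, respectively, $OD_{t}(M)$ and $OD_{s}(N)$ from hypotheses $(i)$ and $(ii)$. Since $L^p(X)$ is $K$-convex (being UMD), Pisier's type--cotype duality yields that $L^q(X^*)$ has type $\gamma'=\gamma/(\gamma-1)$. Proposition~\ref{prop:lower} applied to $U^*$ on $T^{q,2}(H;X^*)$ then gives boundedness provided $M>n/\gamma'=n(1-1/\gamma)$ and $\alpha>n(1/\gamma'-1/2)=n(1/2-1/\gamma)$, and Proposition~\ref{prop:upper} applied to $L^*$ gives boundedness provided $N>n(1-1/\gamma)$ and $\beta>n/2$. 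The dual space identification $(T^{p,2}(H;X))^*\cong T^{q,2}(H;X^*)$ from Section~\ref{sect:tent} transfers these to the desired bounds on $U$ and $L$.

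The step I expect to require the most care is the bookkeeping of the duality: one must track how the indices $(t,s)$, the exponents $(\alpha,\beta)$, and the off-diagonal orders $(M,N)$ interchange upon passing to $U^*$ and $L^*$, and how the type of $L^p(X)$ is replaced by the conjugate of its cotype under the identification of duals. Once this is done, the entire theorem reduces to four direct invocations of the two preceding propositions, one for each of the cases $(a)$--$(d)$.
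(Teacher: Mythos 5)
Your proposal is correct and follows exactly the paper's own argument: split $T=U+L$ into upper- and lower-triangular parts, bound each either directly via Proposition~\ref{prop:upper} or \ref{prop:lower}, or through its adjoint on $T^{q,2}(H;X^*)$ using that $L^q(X^*)$ has type $\gamma'$, and match the four direct/dual combinations to cases $(a)$--$(d)$. Your duality bookkeeping (relabeling, preservation of off-diagonal estimates under adjoints, and the type--cotype duality via K-convexity) is exactly what the paper does, just written out in more detail.
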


\begin{proof}
We split $T$ into a sum $U+L$ of upper and lower triangular parts as considered in the previous two propositions. Part $(a)$ is an immediate consequence, since the conditions on $M$ and $\alpha$ guarantee the boundedness of $U$ and those on $N$ and $\beta$ that of $L$.

For part $(b)$, the boundedness of $U$ follows as before. As for $L$, we observe that its (formal) adjoint on $T^{p',2}(H;X^*)$ is the upper triangular operator
\[
  (L^* G)_t=\int_t^{\infty}\big(\frac{t}{s}\big)^{\beta}T_{s,t}^*G_s\frac{\ud s}{s},
\] 
where $T_{s,t}^*\in OD_s(N)$ and $L^{p'}(X^*)=(L^p(X))^*$ has type $\gamma'=\gamma/(\gamma-1)$. We know that this operator is bounded on $T^{p',2}(H;X^*)$ under the conditions that $N>n/\gamma'=n(1-1/\gamma)$ and $\beta>n/2$.

Parts $(c)$ and $(d)$ are proved similarly by considering $U^*$ and $L$, and $U^*$ and $L^*$, respectively.
\end{proof}

The most important case for us is when \(N=M\), and we record this as a corollary for later reference. In this situation, the condition $(b)$ of Theorem~\ref{thm:Schur} becomes redundant, since it is always contained in condition $(a)$.

\begin{corollary}\label{cor:Schur}
Let $1<p<\infty$, $H$ be a Hilbert space, $X$ be a UMD space, and let $L^p(X)$ have type~$\tau$ and cotype~$\gamma$.
Let $(T_{t,s})_{0<t,s<\infty}$ be a uniformly bounded family of operators on 
$L^2(H)$ such that \(T_{t,s}\in OD_{\max\{t,s\}}(M)\) uniformly in \(t\) and \(s\).
Then
\begin{equation}\label{eq:opT2}
  (TF)_t =\int_0^{\infty}
   \min\big\{\big(\frac{t}{s}\big)^{\alpha},\big(\frac{s}{t}\big)^{\beta}\big\}
     T_{t,s}F_s\frac{\ud s}{s}
\end{equation}
extends to a bounded operator on $T^{p,2}(H;X)$ if at least one of the following three conditions is satisfied:
\begin{enumerate}
  \item[$(a)$] $M>n/\tau$, $\alpha>n/2$, and $\beta>n(1/\tau-1/2)$, 
  \item[$(c)$] $M>n\cdot\max\{1/\tau,1-1/\gamma\}$, $\alpha>n(1/2-1/\gamma)$, and $\beta>n(1/\tau-1/2)$, 
  \item[$(d)$] $M>n(1-1/\gamma)$, $\alpha>n(1/2-1/\gamma)$, and $\beta>n/2$.
\end{enumerate}
\end{corollary}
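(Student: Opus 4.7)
The plan is to reduce the statement to a direct application of Theorem~\ref{thm:Schur} with the parameter identification \(N=M\). The starting observation is that the uniform off-diagonal hypothesis \(T_{t,s}\in OD_{\max\{t,s\}}(M)\) splits naturally across the diagonal: for \(s>t\) we have \(\max\{t,s\}=s\), so that \(T_{t,s}\in OD_s(M)\) uniformly in \(t\), which is precisely condition~\((i)\) of Theorem~\ref{thm:Schur}; symmetrically, for \(t>s\) we obtain condition~\((ii)\) of that theorem with \(N=M\). The operator \((TF)_t\) defined in \eqref{eq:opT2} is also already in the form treated there, so no further setup is needed.

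With the hypotheses of Theorem~\ref{thm:Schur} verified, the three conditions~\((a)\),~\((c)\),~\((d)\) listed in the corollary are exactly the specialisations \(N=M\) of conditions of the same labels in Theorem~\ref{thm:Schur}. In case~\((c)\), the two separate requirements \(M>n(1-1/\gamma)\) and \(N>n/\tau\) coalesce into \(M>n\max\{1/\tau,1-1/\gamma\}\); in cases~\((a)\) and~\((d)\) only one of the two bounds is binding. The constraints on \(\alpha\) and \(\beta\) carry over verbatim. Each of these three cases thus yields boundedness of \(T\) on \(T^{p,2}(H;X)\).

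It remains to explain why condition~\((b)\) of Theorem~\ref{thm:Schur} produces no new case, as claimed just before the corollary statement. Under \(N=M\), \((b)\) demands \(M>\max\{n/\tau,n(1-1/\gamma)\}\), \(\alpha>n/2\) and \(\beta>n/2\); I claim each of these is already implied by the corollary's condition~\((a)\). Indeed, the \(M\)-bound of \((b)\) is stronger than the \(M>n/\tau\) of \((a)\), the \(\alpha\)-bounds coincide, and the inequality \(n(1/\tau-1/2)\le n/2<\beta\) follows from the trivial fact \(\tau\ge 1\) (true for any Banach space). No step in this reduction is substantial; if anything deserves attention it is this last bookkeeping inequality, which is entirely elementary. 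The entire mathematical content sits in Theorem~\ref{thm:Schur}, and the corollary is simply the convenient repackaging in the symmetric scalar setting.
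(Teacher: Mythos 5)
Your argument is correct and is exactly the route the paper takes: specialise Theorem~\ref{thm:Schur} to $N=M$ after noting that the hypothesis $T_{t,s}\in OD_{\max\{t,s\}}(M)$ provides conditions $(i)$ and $(ii)$ of that theorem simultaneously, then observe that condition $(b)$ with $N=M$ is absorbed into $(a)$ because $\tau\ge 1$ gives $n(1/\tau-1/2)\le n/2$. The paper states this redundancy of $(b)$ without the one-line justification you supply, but the content is identical.
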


\begin{remark}
If $X=\C$ (or more generally a Hilbert space), then one can take 
$\tau=\min(2,p)$ and $\gamma=\max(2,p)$ in Corollary~\ref{cor:Schur}. 
For $p\in[2,\infty)$ (so that $\tau=2$), 
part $(a)$ provides the following sufficient condition for the $T^{p,2}$-boundedness of~\eqref{eq:opT2}: 
$M,\alpha>n/2$, and $\beta>0$. 
For $p\in(1,2]$ (so that $\gamma=2$), part $(d)$ in turn gives 
$M,\beta>n/2$, and $\alpha>0$. This recovers the corresponding result in 
\cite{AMR} in the Euclidean case for $p\in(1,\infty)$. 
Note that in \cite{AMR} the end-points $p\in\{1,\infty\}$ are 
also considered; in fact, the proof for $p\in(1,2)$ goes 
via interpolating between estimates available in the 
atomic space $T^{1,2}$ and the Hilbert space $T^{2,2}$.  
See also \cite{ADM}, where a weak type $(1,1)$ estimate is obtained.
\end{remark}

\section{Bisectorial operators and functional calculus}\label{sect:calculus}

In this section we collect some generalities concerning bisectorial operators and their 
\(H^{\infty}\)-calculus.
We denote by \(S_{\theta}\) the (open) bisector of angle \(\theta\), i.e.
$S_\theta = S_\theta^+\cup S_\theta^-$
with $S_\theta^+ = \{z\in \C\setminus\{0\}: \ |\arg(z)|<\theta\}$
and $S_\theta^- =-S_\theta^+$.
We denote by \(\Gamma_\theta\) the boundary of \(S_{\theta}\),
which is parameterised by arc-length and oriented anticlockwise around \(S_{\theta}\).

A closed, densely defined, linear operator $A$ acting in a Banach space \(Y\) is called {\em bisectorial} (of {\em angle} $\omega$,
where $0<\omega<\frac12\pi$)
if the spectrum of $A$ is contained in $\overline{S_\omega}$ and 
for all $\omega<\theta<\frac12\pi$ there exists a constant $C_\theta$
such that for all nonzero $z\in \C\setminus S_\theta$
\begin{equation*}
  (I+zA)^{-1} \le C_\theta \frac{|z|}{d(z,S_\theta)}.
\end{equation*}

For \(\alpha,\beta >0\) we set 
\begin{equation*}\begin{split}
  \Psi_{\alpha}(S_{\theta})
  & = \big\{f \in H^{\infty}(S_{\theta}): \ \exists C \ \ |f(z)| \leq C\min(|z|^{\alpha},1) 
      \hbox{ for all } z\in S_\theta\big\}, \\
  \Psi^{\beta}(S_{\theta})
  & = \big\{f \in H^{\infty}(S_{\theta}): \ \exists C \ \ |f(z)| \leq C\min(1,|z|^{-\beta}) 
      \hbox{ for all } z\in S_\theta\big\}, \\
  \Psi_{\alpha}^\beta(S_{\theta}) & = \big\{f \in H^{\infty}(S_{\theta}): \ \exists C \ \ |f(z)| \leq C\min(|z|^{\alpha},|z|^{-\beta})  \hbox{ for all } z\in S_\theta\big\}
\end{split}\end{equation*}
and 
\(\Psi(S_{\theta}) = \bigcup  _{\alpha,\beta>0}
\Psi_{\alpha}^{\beta}(S_{\theta}).\)

Let \(\omega<\theta<\frac12\pi\) be fixed. 
For \(\psi\in \Psi(S_\theta)\), we define
\begin{equation*}
 \psi(A) =  \frac{1}{2\pi i} \int_{\Gamma_\theta}\psi(z)(z-A)^{-1}\ud z.
\end{equation*}
The resolvent bounds for $A$ imply that this integral converges absolutely
in $\calL(Y)$. If one has, in addition, the quantitative estimate \[\|\psi(A)\|_{\calL(Y)}\lesssim\|\psi\|_{\infty},\] then \(A\) is said to have 
\(H^{\infty}(S_{\theta})\)-calculus 
on \(Y\).

\begin{lemma}\label{lem:justification}
Let $A$ be bisectorial of angle $\omega$ and let $\theta>\omega$.
\ben
\item[\rm(1)]
For $\phi_1,\phi_2\in\Psi(S_{\theta})$ we have $\phi_1(A)\phi_2(A)=(\phi_1\cdot\phi_2)(A)$; this is also true if 
$\phi_2\in H^{\infty}(S_{\theta})$ is a rational function, in which case $\phi_2(A)$ is defined in the usual way by using the resolvents of $A$.
\item[\rm(2)]
For all
$\psi_1\in \Psi(S_\theta)$, $\psi_2\in H^\infty(S_\theta)$, 
$\psi_3\in \Psi(S_\theta)$ we have
\begin{equation*}
  \psi_1(A)(\psi_2\psi_3)(A) = (\psi_1\psi_2)(A)\psi_3(A).
\end{equation*}
\een
\end{lemma}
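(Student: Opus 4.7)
The plan is to prove part (1) first by the classical double-contour Fubini/Cauchy computation, handle the rational extension by an approximation argument, and then deduce (2) at once by applying (1) twice.

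For (1) with $\phi_1,\phi_2\in\Psi(S_\theta)$, I would fix two auxiliary angles $\omega<\theta'<\theta''<\theta<\tfrac12\pi$, represent $\phi_1(A)$ by the contour $\Gamma_{\theta''}$ and $\phi_2(A)$ by $\Gamma_{\theta'}$, and write
\[
\phi_1(A)\phi_2(A)=\frac{1}{(2\pi i)^2}\int_{\Gamma_{\theta''}}\!\!\int_{\Gamma_{\theta'}}\phi_1(z)\phi_2(w)(z-A)^{-1}(w-A)^{-1}\,\ud w\,\ud z.
\]
The $\Psi$-decay of $\phi_1,\phi_2$ at $0$ and $\infty$, combined with the bisectorial resolvent bound $\|(z-A)^{-1}\|\lesssim |z|^{-1}$ on $\Gamma_{\theta''}$ and $\|(w-A)^{-1}\|\lesssim |w|^{-1}$ on $\Gamma_{\theta'}$, secures absolute convergence in $\calL(Y)$, hence Fubini applies. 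Since the two contours are disjoint, I would insert the resolvent identity $(z-A)^{-1}(w-A)^{-1}=(z-w)^{-1}[(w-A)^{-1}-(z-A)^{-1}]$ and split the double integral into two. In the first piece, the inner integral $\frac{1}{2\pi i}\int_{\Gamma_{\theta''}}\phi_1(z)(z-w)^{-1}\,\ud z$, for $w$ sitting inside $\Gamma_{\theta''}$, is evaluated by closing the four rays off at infinity (permitted by the $\Psi$-decay) and collecting the residue at $z=w$, producing $\phi_1(w)$. In the second piece, for fixed $z$ on the outer contour, the inner integral over the now \emph{exterior} variable $w\in\Gamma_{\theta'}$ encloses no pole and the same closing-up argument yields $0$. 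What remains is precisely $\frac{1}{2\pi i}\int_{\Gamma_{\theta'}}\phi_1(w)\phi_2(w)(w-A)^{-1}\,\ud w=(\phi_1\phi_2)(A)$.

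For the rational case of (1), decompose $\phi_2\in H^\infty(S_\theta)$ by partial fractions into a finite sum of terms of the form $c(\lambda-z)^{-k}$ with $\lambda\in\C\setminus\overline{S_\theta}$; linearity reduces the problem to a single such factor, for which $\phi_2(A)$ is the resolvent power $c(\lambda-A)^{-k}$ defined by the usual functional calculus of resolvents. I would then multiply $\phi_2$ by the standard $\Psi$-regulariser $\eta_\e(z)=(\e z)^N(1+\e z)^{-N}(1-\e z)^{-N}$ with $N$ large, so that $\phi_2\eta_\e\in\Psi(S_\theta)$, apply the already-proven $\Psi\times\Psi$ case to the pair $(\phi_1,\phi_2\eta_\e)$, and pass to the limit $\e\downarrow 0$. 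Dominated convergence in the contour integral on the right and the identity $\eta_\e(A)\to I$ strongly (applied after noting that both sides can be rewritten as $\phi_1(A)\phi_2(A)\eta_\e(A)$ modulo a commuting resolvent) give the desired relation.

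Part (2) then follows at once: since $\psi_2\in H^\infty(S_\theta)$ is bounded, the decay bounds defining $\Psi$ are preserved under multiplication by $\psi_2$, so both $\psi_1\psi_2$ and $\psi_2\psi_3$ lie in $\Psi(S_\theta)$. Applying (1) twice,
\[
\psi_1(A)(\psi_2\psi_3)(A)=(\psi_1\psi_2\psi_3)(A)=(\psi_1\psi_2)(A)\,\psi_3(A).
\]
The main technical obstacle I expect is the careful bookkeeping in the Cauchy/Fubini step: unlike the sectorial case, the bisector contour $\Gamma_\theta$ consists of four rays indented near the origin, and one has to verify the closing-up of the contour symmetrically at $0$ and $\infty$ on both halves $S_\theta^\pm$, checking uniform bounds on the truncating arcs so that they contribute nothing in the limit. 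The rational approximation step requires a similarly careful, but routine, limit argument.
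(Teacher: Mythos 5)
Your proof of the $\Psi\times\Psi$ case of (1) is exactly what the paper intends (double contour, resolvent identity, Cauchy), and is fine modulo the contour bookkeeping you already flag. Your proof of (2) is actually a genuine improvement over the paper's: you observe that since $\psi_2\in H^\infty(S_\theta)$ is bounded, both $\psi_1\psi_2$ and $\psi_2\psi_3$ already lie in $\Psi(S_\theta)$, so part (1) applied twice gives $\psi_1(A)(\psi_2\psi_3)(A)=(\psi_1\psi_2\psi_3)(A)=(\psi_1\psi_2)(A)\psi_3(A)$ directly, with no approximation. The paper instead proves (2) first for $\psi_2\in\Psi$ and then approximates general $\psi_2\in H^\infty$ by $\psi_2\eta_\e$ with $\eta_\e\in\Psi$, passing to the limit via dominated convergence; that works too, but your route is shorter.

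However, the rational case of (1) in your proposal has a real gap. The regulariser $\eta_\e(z)=(\e z)^N(1+\e z)^{-N}(1-\e z)^{-N}$ does not lie in $H^\infty(S_\theta)$: its poles are at $z=\pm 1/\e$, which are on the real axis and therefore \emph{inside} the bisector $S_\theta$. A bisectorial regulariser must have its poles off the real axis (e.g.\ something like $\eta_\e(z)=\tfrac{z^2}{(\e+z^2)(1+\e z^2)}$, with poles at $\pm i\sqrt\e$ and $\pm i/\sqrt\e$), and must also satisfy $\eta_\e\to 1$ pointwise as $\e\downarrow 0$, which your $\eta_\e$ does not (it tends to $0$). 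More seriously, even with a correct $\eta_\e$, the step where you ``rewrite both sides as $\phi_1(A)\phi_2(A)\eta_\e(A)$'' requires the identity $(\phi_2\eta_\e)(A)=\phi_2(A)\eta_\e(A)$ with rational $\phi_2$ and $\eta_\e\in\Psi$ — which is precisely the instance of the homomorphism you are trying to establish, so the approximation argument is circular as written. Finally, $\eta_\e(A)\to I$ strongly holds only on $\overline{\ran(A)}$ (on $\ker(A)$ each $\eta_\e(A)$ vanishes), so one must additionally observe that $\ran(\phi_1(A))\subseteq\overline{\ran(A)}$; you do not address this. The intended (and cleaner) proof of the rational case, as the paper indicates, is direct: substitute the explicit resolvent powers defining $\phi_2(A)$ into the contour integral for $\phi_1(A)$, expand $(z-A)^{-1}(\lambda-A)^{-k}$ by partial fractions in $A$, and apply Cauchy's theorem, using that the pole $\lambda$ of $\phi_2$ lies outside $\overline{S_\theta}$ and hence outside the contour $\Gamma_\theta$.
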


\begin{proof}
The first claim is the well-known homomorphism property, which in both cases can be proved by writing out the definition of $\phi_1(A)\phi_2(A)$, performing a partial fraction expansion, and using Cauchy's theorem. The second claim follows from the homomorphism property for $\psi_2\in \Psi(S_\theta)$, and the general case can be obtained from this by approximation
(cf. \cite[Theorem 9.2(i)]{KuWe}).
\end{proof} 

\begin{lemma}\label{lem:ranges}
Let $A$ be bisectorial of angle $\omega$ and let  $\theta>\omega$. Then,
\[\overline{\ran(A)}=\overline{\ran(A)\cap\dom(A)}
=\overline{\ran(A(I+A)^{-2})}=\overline{\bigcup_{\psi\in\Psi(S_\theta)}
\ran(\psi(A))}.\]
\end{lemma}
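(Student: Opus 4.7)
The plan is to establish the chain of equalities by combining trivial inclusions with two approximation arguments. First, three inclusions are immediate: since $(I+A)^{-2}$ maps into $\dom(A^2)\subseteq\dom(A)$, we have $\ran(A(I+A)^{-2})\subseteq\ran(A)\cap\dom(A)\subseteq\ran(A)$; and since $\psi_0(z):=z/(1+z)^2$ lies in $\Psi_1^1(S_\theta)$, Lemma~\ref{lem:justification}(1) gives $\psi_0(A)=A(I+A)^{-2}$, so $\ran(A(I+A)^{-2})\subseteq\bigcup_{\psi\in\Psi(S_\theta)}\ran(\psi(A))$. After passing to closures it remains to prove
\begin{enumerate}
\item[(i)] $\overline{\ran(A)}\subseteq\overline{\ran(A(I+A)^{-2})}$,
\item[(ii)] $\ran(\psi(A))\subseteq\overline{\ran(A)}$ for every $\psi\in\Psi(S_\theta)$.
\end{enumerate}

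For (i), I would fix $y=Ax\in\ran(A)$ with $x\in\dom(A)$ and set $y_n:=(I+n^{-1}A)^{-2}y$. Since $A$ is densely defined bisectorial, the resolvents $(I+n^{-1}A)^{-1}$ are uniformly bounded and converge strongly to $I$, hence $y_n\to y$. Commuting $(I+n^{-1}A)^{-2}$ with $A$ gives $y_n=Aw_n$ where $w_n:=(I+n^{-1}A)^{-2}x\in\dom(A^2)$. Setting $z_n:=(I+A)^2 w_n\in Y$, the algebraic identity $(I+A)^{-2}(I+A)^2=I$ on $\dom(A^2)$ yields $A(I+A)^{-2}z_n=Aw_n=y_n$, so $y_n\in\ran(A(I+A)^{-2})$ and $y\in\overline{\ran(A(I+A)^{-2})}$.

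For (ii), fix $\psi\in\Psi(S_\theta)$ and $x\in\dom(A)$, and insert the identity $(z-A)^{-1}x=z^{-1}x+z^{-1}(z-A)^{-1}Ax$ (valid for $z\in\rho(A)$ and $x\in\dom(A)$) into the Dunford integral defining $\psi(A)x$. The zeroth-order contribution is $x\cdot\frac{1}{2\pi i}\int_{\Gamma_\theta}\psi(z)/z\,\ud z$, and this integral vanishes: $\psi(z)/z$ is holomorphic on each of $S_\theta^{\pm}$, and applying Cauchy's theorem on truncated subsectors $\{\epsilon<|z|<R\}\cap S_\theta^{\pm}$ and letting $\epsilon\to 0$, $R\to\infty$, the circular arcs contribute nothing by the decay $|\psi(z)|\lesssim\min(|z|^\alpha,|z|^{-\beta})$ with $\alpha,\beta>0$. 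We are therefore left with
\[
  \psi(A)x=\frac{1}{2\pi i}\int_{\Gamma_\theta}\frac{\psi(z)}{z}\,A(z-A)^{-1}x\,\ud z,
\]
an absolutely convergent Bochner integral whose integrand $A(z-A)^{-1}x$ lies in $\ran(A)$ for every $z\in\Gamma_\theta$; hence $\psi(A)x\in\overline{\ran(A)}$. By density of $\dom(A)$ in $Y$ and boundedness of $\psi(A)$, we conclude $\ran(\psi(A))\subseteq\overline{\ran(A)}$.

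The only delicate points are the vanishing of $\int_{\Gamma_\theta}\psi(z)/z\,\ud z$ (a routine Cauchy-theorem argument, but one must treat the two halves of the bisector separately and control both the inner and outer arcs uniformly), and the domain bookkeeping in step (i) ensuring $w_n\in\dom(A^2)$ so that $z_n:=(I+A)^2 w_n$ is meaningful and the cancellation $(I+A)^{-2}(I+A)^2=I$ applies. Neither step presents serious difficulty.
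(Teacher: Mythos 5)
Your proof is correct. The structure is the same as the paper's (a cycle of inclusions, all driven by resolvent approximation and the elementary identity $Ag = A(I+A)^{-2}(I+A)^2g$ with $\psi_0(z)=z/(1+z)^2$), but one step is handled by a genuinely different argument. For $\ran(\psi(A))\subseteq\overline{\ran(A)}$ you decompose the resolvent inside the Dunford integral via $(z-A)^{-1}x = z^{-1}x + z^{-1}(z-A)^{-1}Ax$ for $x\in\dom(A)$ and then check that the zeroth-order term drops out because $\int_{\Gamma_\theta}\psi(z)/z\,\ud z=0$; this requires the Cauchy-theorem computation on truncated subsectors, and also requires you to pass to a dense subset $\dom(A)$ and use boundedness of $\psi(A)$. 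The paper instead regularizes the output: it sets $f_\varepsilon := A(\varepsilon+A)^{-1}\psi(A)g \in \ran(A)$ and shows $f_\varepsilon\to \psi(A)g$ by writing the difference $\varepsilon(\varepsilon+A)^{-1}\psi(A)g$ as a Dunford integral with integrand dominated by $|\psi(z)/z|\in L^1(\Gamma,|\ud z|)$ and invoking dominated convergence. Both routes are routine; the paper's avoids the contour computation and the density reduction at the price of a dominated-convergence argument, so it is arguably a touch shorter. For the remaining inclusions your approximation $(I+n^{-1}A)^{-2}$ fuses the paper's two steps—$(I+\varepsilon A)^{-1}f\to f$ followed by the algebraic factorization on $\ran(A)\cap\dom(A)$—into a single one, which is fine; the domain bookkeeping $w_n=(I+n^{-1}A)^{-2}x\in\dom(A^2)$ is exactly as you say.
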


\begin{proof}
If \(f=\psi(A)g\in\ran(\psi(A))\), let \(f_{\varepsilon}:=A(\varepsilon+A)^{-1}f\in\ran(A)\). Then
\begin{equation*}
  f-f_{\varepsilon}=\varepsilon(\varepsilon+A)^{-1}\psi(A)g
  =\frac{1}{2\pi i}\int_{\Gamma}
   \frac{\varepsilon}{\varepsilon+z}\psi(z)(z-A)^{-1}g\ud z.
\end{equation*}
The integrand is bounded by \(\abs{\psi(z)z^{-1}}\in L^1(\Gamma,\abs{\ud z})\) and tends pointwise to zero as \(\varepsilon\to 0\). Hence \(f_{\varepsilon}\to f\) by dominated convergence.

Next we observe that \(f^{\varepsilon}=(I+\varepsilon A)^{-1}f\to f\) as \(\varepsilon\to 0\). Indeed, if \(f\in\dom(A)\), then \(f-f^{\varepsilon}=\varepsilon\cdot(I+\varepsilon A)^{-1}Af\) has norm at most \(C\varepsilon\), since the second factor stays uniformly bounded. Since the operators \((I+\varepsilon A)^{-1}\) are uniformly bounded and \(\dom(A)\) is dense, the convergence remains true for all \(f\). If now \(f\in\ran(A)\), then \(f^{\varepsilon}\in\ran(A)\cap\dom(A)\).

To complete the chain, let \(f\in\ran(A)\cap\dom(A)\). Then for some \(g\in\dom(A^2)\) we have  \(f=Ag=A(I+A)^{-2}(I+A)^2g=\psi(A)h\), where \(\psi(z)=z/(1+z)^2\in\Psi\) and \(h=(I+A)^2g\). This completes the proof.
\end{proof}

We say that \(\psi\in \Psi_{\a}^{\b}(S_\theta)\) is degenerate if (at least) one of the restrictions \(\psi|_{S^{\pm}_{\theta}}\) vanishes identically; otherwise it is called non-degenerate.
The following two lemmas go back to Calder\'on, cf. \cite[Section
IV.6.19]{Stein}. For the convenience of the reader we include simple proofs.

\begin{lemma}[Calder\'on's reproducing formula, I]\label{lem:newPsi} 
Let \(\psi\in \Psi_{\a}^{\b}(S_\theta)\) be non-degen\-erate. If $\a'\geq \a$ and $\b'\geq \b$,
there exists \(\widetilde \psi\in  \Psi_{\a'}^{\b'}(S_\theta)\)
such that 
\begin{equation}\label{eq:Calderon}
  \int_0^\infty \psi(tz)\widetilde\psi(tz)\frac{\ud t}{t} = 1, \quad z\in S_\theta.
\end{equation}
\end{lemma}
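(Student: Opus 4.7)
The plan is to construct $\widetilde\psi$ as a suitably scaled version of a convenient auxiliary function $\eta$, with independent scalings on the two connected components $S_\theta^\pm$ of $S_\theta$. The key preliminary is a constancy lemma: for every $\eta\in\Psi_{\alpha'}^{\beta'}(S_\theta)$ the integral
\[
  F(z) := \int_0^\infty \psi(tz)\,\eta(tz)\,\frac{\ud t}{t}
\]
converges absolutely, since the integrand is $O(t^{\alpha+\alpha'})$ as $t\to 0$ and $O(t^{-\beta-\beta'})$ as $t\to\infty$, and defines a function that is constant on each of $S_\theta^\pm$. I would prove this by writing $z=re^{i\phi}$, substituting $u=tr$ to reduce to $\int_0^\infty \psi(ue^{i\phi})\eta(ue^{i\phi})\frac{\ud u}{u}$, and then deforming the ray of integration through admissible arguments via Cauchy's theorem; the decay of $\psi\eta$ at both endpoints kills the auxiliary arcs in the limit.

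For the actual choice of $\eta$, I would set
\[
  \eta(z) := \phi(z)\,\overline{\psi(\bar z)},\qquad \phi(z) := \Bigl(\frac{z}{1+z^2}\Bigr)^{2k},
\]
for a sufficiently large integer $k$. Since each of $S_\theta^\pm$ is symmetric about the real axis, the Schwarz reflection $z\mapsto\overline{\psi(\bar z)}$ is holomorphic on $S_\theta$, and $\phi$ is holomorphic on $S_\theta$ as $\theta<\pi/2$. A direct pointwise estimate yields $|\eta(z)|\lesssim\min(|z|^{\alpha+2k},|z|^{-\beta-2k})$, so $\eta\in\Psi_{\alpha'}^{\beta'}(S_\theta)$ whenever $2k\geq\max(\alpha'-\alpha,\beta'-\beta)$.

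To compute the two constants $c^\pm := F|_{S_\theta^\pm}$, I would evaluate at $z=\pm1$, obtaining
\[
  c^\pm = \int_0^\infty \phi(\pm t)\,|\psi(\pm t)|^2\,\frac{\ud t}{t};
\]
both are strictly positive because $\phi$ is strictly positive on $\R\setminus\{0\}$ (as $2k$ is even), while the non-degeneracy of $\psi$, together with analytic continuation within each component, forces neither $\psi|_{(0,\infty)}$ nor $\psi|_{(-\infty,0)}$ to vanish identically. Finally, defining $\widetilde\psi:=(c^+)^{-1}\eta$ on $S_\theta^+$ and $\widetilde\psi:=(c^-)^{-1}\eta$ on $S_\theta^-$ produces the desired element of $\Psi_{\alpha'}^{\beta'}(S_\theta)$, for which \eqref{eq:Calderon} follows from the constancy of $F$ on each component. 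The most delicate step is the contour rotation underlying the constancy lemma, but given the absolute decay estimates on $\psi\eta$ at $0$ and $\infty$ this is a routine Cauchy's-theorem argument.
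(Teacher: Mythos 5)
Your proof is correct and follows essentially the same route as the paper: take $\widetilde\psi$ to be a Schwarz reflection $\overline{\psi(\bar z)}$ multiplied by a power of $z/(1+z^2)$ to boost the decay, normalized separately on the two components $S_\theta^\pm$ by the integrals at $z=\pm1$, with the constancy in $z$ of $\int_0^\infty\psi(tz)\eta(tz)\,\ud t/t$ on each component following from Cauchy's theorem. Your choice of an even exponent $2k$ is a small improvement over the paper's $m\ge\max(\alpha'-\alpha,\beta'-\beta)$, since it guarantees both normalizing constants are strictly positive rather than merely nonzero; and you make explicit the contour-rotation step that the paper leaves implicit when it evaluates only at $z=\pm1$ and asserts the identity for all $z\in S_\theta$.
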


\begin{proof}
Let $\overline\psi(z) := \overline{\psi(\overline z)}$. Let $m\ge \max(\a'-\a,
\b'-\b)$ and denote
\begin{equation*}
  c_{\pm}:=\int_0^\infty \frac{(\pm t)^m}{(1+t^2)^{m}} \psi(\pm t)\overline\psi(\pm t)\frac{\ud t}{t}.
\end{equation*}
By non-degeneracy, \(c_{\pm}> 0\).
Hence the function $\widetilde\psi(z) =
c_{\pm}^{-1}z^m(1+z^2)^{-m}\overline{\psi(\overline{z})}$ for \(z\in S^{\pm}_{\theta}\) has the desired properties.
\end{proof}

\begin{lemma}[Calder\'on's reproducing formula, II]\label{lem:reproducing}
Let \(\psi,\widetilde\psi\in\Psi(S_{\theta})\) satisfy \eqref{eq:Calderon}. Then
\begin{equation*}
  \int_0^{\infty}\psi(tA)\widetilde\psi(tA)f\frac{\ud t}{t}=f,\qquad
  f\in\overline{\ran(A)},
\end{equation*}
where the left side is defined as an indefinite Riemann integral in \(L^2\).
\end{lemma}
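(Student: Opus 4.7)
My plan is to reduce the truncated integral to an $H^\infty$-calculus expression via Fubini, verify the identity on the dense subspace $\bigcup_{\psi_0\in\Psi(S_\theta)}\ran(\psi_0(A))$ by dominated convergence, and then extend to $\overline{\ran(A)}$ by density.

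For $0<\varepsilon<R<\infty$, I would set $\phi_{\varepsilon,R}(z):=\int_\varepsilon^R\psi(tz)\widetilde\psi(tz)\,\ud t/t$. Writing $|\psi(z)|,|\widetilde\psi(z)|\lesssim\min(|z|^\alpha,|z|^{-\beta})$, a substitution $s=t|z|$ shows that $\|\phi_{\varepsilon,R}\|_{H^\infty(S_\theta)}$ is bounded uniformly in $\varepsilon,R$, that for each fixed pair $(\varepsilon,R)$ one still has $\phi_{\varepsilon,R}\in\Psi_{2\alpha}^{2\beta}(S_\theta)$, and that $\phi_{\varepsilon,R}(z)\to 1$ pointwise on $S_\theta$ by~\eqref{eq:Calderon}. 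Using Lemma~\ref{lem:justification}(1) to identify $\psi(tA)\widetilde\psi(tA)=(\psi(t\cdot)\widetilde\psi(t\cdot))(A)$ and Fubini's theorem to exchange the $t$-integral with the Cauchy contour (the absolute convergence being ensured by $\|(z-A)^{-1}\|\lesssim 1/|z|$ on $\Gamma_\theta$ together with the decay of $\psi\widetilde\psi$), I would obtain
\[
  \int_\varepsilon^R \psi(tA)\widetilde\psi(tA)\,\frac{\ud t}{t}=\phi_{\varepsilon,R}(A).
\]

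Next, for $f=\psi_0(A)g$ with arbitrary non-degenerate $\psi_0\in\Psi(S_\theta)$, Lemma~\ref{lem:justification}(1) gives $\phi_{\varepsilon,R}(A)f=(\phi_{\varepsilon,R}\psi_0)(A)g$. The integrand $\phi_{\varepsilon,R}\psi_0$ in the defining Cauchy integral converges pointwise to $\psi_0$ and is dominated uniformly by $\|\phi_{\varepsilon,R}\|_\infty|\psi_0|$, which is integrable against $|\ud z|/|z|$ on $\Gamma_\theta$ since $\psi_0\in\Psi$. Dominated convergence thus yields $\phi_{\varepsilon,R}(A)f\to\psi_0(A)g=f$ in $L^2$. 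By Lemma~\ref{lem:ranges}, such $f$ span a dense subset of $\overline{\ran(A)}$.

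The hard part is extending this pointwise convergence from the dense subset to all $f\in\overline{\ran(A)}$: this step requires a uniform bound $\sup_{\varepsilon,R}\|\phi_{\varepsilon,R}(A)\|_{\calL(\overline{\ran(A)})}<\infty$, after which a standard three-epsilon argument finishes the proof. In the $L^2$ setting implicit in the statement, such uniform boundedness follows from $\|\phi_{\varepsilon,R}\|_\infty\lesssim 1$ via the bounded $H^\infty$-calculus of $A$ on $\overline{\ran(A)}$ (as is automatic on a Hilbert space under the natural quadratic estimates that accompany this framework). If instead one only wishes to interpret the conclusion on the above dense subspace, this last step can be omitted entirely.
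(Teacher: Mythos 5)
Your proposal is correct and follows essentially the same strategy as the paper: prove the identity on the dense set $\bigcup_{\psi_0\in\Psi(S_\theta)}\ran(\psi_0(A))$ via the homomorphism property and Fubini, then pass to $\overline{\ran(A)}$ using the uniform $H^\infty$ bound on the truncations $\phi_{\varepsilon,R}$. Your presentation is a touch more explicit in isolating the truncated operator $\phi_{\varepsilon,R}(A)$ before applying dominated convergence, whereas the paper computes the full $\int_0^\infty$ for dense $f$ in one go and then invokes the uniform boundedness of $\int_a^b$ to handle the indefinite Riemann integral; these are equivalent bookkeeping choices. One small caveat: your parenthetical remark that bounded $H^\infty$-calculus is ``automatic on a Hilbert space under the natural quadratic estimates that accompany this framework'' is a slight overstatement --- quadratic estimates are equivalent to, not a consequence of, bisectoriality, and neither holds automatically. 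In context the needed $H^\infty$-calculus on $L^2(H)$ is part of Assumption~\ref{as:operator}, which is what actually licenses the uniform bound $\sup_{\varepsilon,R}\|\phi_{\varepsilon,R}(A)\|<\infty$; the paper's phrase ``uniformly bounded by the functional calculus'' relies on the same hypothesis.
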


\begin{proof}
Let first \(f=\phi(A)g\) for some \(\phi\in\Psi(S_{\theta})\). Then
\begin{equation*}\begin{split}
  \int_0^{\infty}\psi(tA)\widetilde\psi(tA)f\frac{\ud t}{t}
  &=\int_0^{\infty}(\psi(t\cdot)\widetilde\psi(t\cdot)\phi(\cdot))(A)g\frac{\ud t}{t} \\
  &=\int_0^{\infty}  \frac{1}{2\pi i}\int_{\Gamma_{\theta'}}
   \psi(tz)\widetilde\psi(tz)\phi(z)(z-A)^{-1}g\ud z\frac{\ud t}{t} \\
  &=\frac{1}{2\pi i}\int_{\Gamma_{\theta'}}
   \int_0^{\infty}\psi(tz)\widetilde\psi(tz)\frac{\ud t}{t}\phi(z)(z-A)^{-1}g\ud z \\
  &=\frac{1}{2\pi i}\int_{\Gamma_{\theta'}}\phi(z)(z-A)^{-1}g\ud z
   =\phi(A)g=f
\end{split}\end{equation*}
by Lemma~\ref{lem:justification}, absolute convergence and Fubini's theorem. To conclude, we recall from Lemma~\ref{lem:ranges} that functions as above are dense in \(\overline{\ran(A)}\), and notice that \(\int_a^b\psi(sz)\widetilde\psi(sz)\ud s/s\) are uniformly in 
\(H^{\infty}(S_{\theta})\) so that the corresponding operators obtained by the formal substitution \(z:=A\) are uniformly bounded by the functional calculus. From this the convergence of the indefinite Riemann integral to the asserted limit follows easily.
\end{proof}

\section{Hardy spaces associated with bisectorial operators}
\label{sect:hardy}

We now move on to more specific spaces and operators. Throughout this section, we let the following assumptions be satisfied:

\begin{assumption}\label{as:space}
The Banach space \(X\) is UMD and \(1<p<\infty\). Two numbers \(\tau\in[1,2]\) and \(\gamma\in[2,\infty]\) are fixed in such a way that \(L^p(X)\) has type~\(\tau\) and cotype~\(\gamma\). 
\end{assumption}

\begin{assumption}\label{as:operator}
$H$ is a Hilbert space, and the operator $A$ in \(L^2(H)\) 
 is bisectorial of angle \(\omega \in (0,\pi/2)\). For \(\omega<\theta'<\theta<\pi/2\), it also has an 
 \(H^{\infty}(S_{\theta})\)-calculus
 on \(L^2(H)\), 
and the family \(((I+\zeta A)^{-1})_{\zeta\in \C\setminus S_{\theta}}\)
satisfies off-diagonal estimates of order $M$, where \(M>n\cdot\min\{1/\tau,1-1/\gamma\}\).
\end{assumption}






With only the above assumptions at hand, it may well happen that \(A\)  fails to be bisectorial 
even for $H=\C$, 
and in particular to have an \(H^{\infty}\)-calculus, 
in \(L^p\) for some values of \(p\not=2\). The tensor extension \(A\otimes I_X\) may already fail these properties in \(L^2(X)\). To study problems involving operators \(f(A)\) in such spaces, we are 
thus led to define an appropriate scale of Hardy spaces associated with \(A\). When \(A\) is the Hodge--Dirac operator or the Hodge--de Rham Laplacian on a complete Riemannian manifold, 
this has been done in \cite{AMR}.
We build on the ideas of this paper.

\begin{lemma}
\label{lem:od1}
For \(\omega<\theta<\pi/2\) 
and $\varepsilon>0$, 
let \(g\in H^{\infty}(S_{\theta})\), 
and let \(\psi \in \Psi_{M+\varepsilon}^{\varepsilon}(S_{\theta})\).
Then \(\{(g\cdot\psi(t\cdot))(A)\}_{t>0}\) satisfies off-diagonal estimates of order \(M\), and the off-diagonal constant has an upper bound which depends linearly on \(\|g\|_{\infty}\).
\end{lemma}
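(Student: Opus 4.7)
The plan is to obtain the desired off-diagonal bound directly from the Cauchy integral representation of $(g\cdot\psi(t\cdot))(A)$, combined with the assumed off-diagonal estimates for the resolvents. Fix $\omega<\theta'<\theta$ and write
\[
  (g\cdot\psi(t\cdot))(A)=\frac{1}{2\pi i}\int_{\Gamma_{\theta'}} g(z)\psi(tz)(z-A)^{-1}\ud z.
\]
The key conversion is that, for $z\in\Gamma_{\theta'}$ and $\zeta=-1/z$, one has $(z-A)^{-1}=z^{-1}(I+\zeta A)^{-1}$ with $\zeta\in\C\setminus S_\theta$ (since $\theta'<\pi/2$ forces $|\arg(-1/z)|>\pi/2$). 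The off-diagonal assumption of Assumption~\ref{as:operator} then gives, for Borel sets $E,F\subset\R^n$ and $f\in L^2(F;H)$,
\[
  \|(z-A)^{-1}f\|_{L^2(E;H)}\lesssim |z|^{-1}\bigl\langle d(E,F)|z|\bigr\rangle^{-M}\|f\|_{L^2(F;H)},
\]
since $1/|\zeta|=|z|$.

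Next I would apply Minkowski's inequality under the contour integral and parametrise $\Gamma_{\theta'}$ by its four rays, using $|g(z)|\le\|g\|_\infty$ and the growth bound $|\psi(tz)|\lesssim\min\bigl((t|z|)^{M+\varepsilon},(t|z|)^{-\varepsilon}\bigr)$. This reduces the task to showing
\[
  I(s):=\int_0^\infty\min\bigl(u^{M+\varepsilon},u^{-\varepsilon}\bigr)\,u^{-1}\langle su\rangle^{-M}\ud u\lesssim\langle s\rangle^{-M},
\]
after the substitution $u=t|z|$, with $s=d(E,F)/t$; pulling out $\|g\|_\infty$ gives the linear dependence.

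The remaining work is to verify this scalar integral inequality, which is elementary. For $s\le 1$ one has $\langle s\rangle\eqsim 1$ and $I(s)$ is bounded since $\min(u^{M+\varepsilon},u^{-\varepsilon})u^{-1}$ is integrable at both $0$ (as $u^{M+\varepsilon-1}$) and $\infty$ (as $u^{-\varepsilon-1}$). For $s>1$ one splits the integral at $u=1/s$: on $(0,1/s)$ one estimates $\langle su\rangle\eqsim 1$ and $\min(\cdots)\le u^{M+\varepsilon}$, giving a contribution of order $s^{-(M+\varepsilon)}\lesssim s^{-M}$; on $(1/s,\infty)$ one has $\langle su\rangle\eqsim su$, producing the factor $s^{-M}$ with a residual integral $\int_{1/s}^\infty\min(u^{M+\varepsilon},u^{-\varepsilon})u^{-1-M}\ud u$ that is bounded uniformly in $s$ by the same decay of the minimum at $0$ and $\infty$.

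The only minor delicacy is checking the bisectorial/resolvent-complement bookkeeping in the first step (so that the off-diagonal estimate of $(I+\zeta A)^{-1}$ can be applied at scale $|\zeta|=1/|z|$); apart from this, the proof is just calculus. No obstacles of genuine substance are expected.
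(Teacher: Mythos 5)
Your proof follows essentially the same route as the paper: write $(g\cdot\psi(t\cdot))(A)$ as a Cauchy integral on $\Gamma_{\theta'}$, apply the resolvent off-diagonal bounds of Assumption~\ref{as:operator} at scale $1/|z|$, and evaluate the resulting scalar integral. The one place where you are slightly more careful than the paper is that you keep the bracket $\langle d(E,F)\,|z|\rangle^{-M}$ throughout and verify the target estimate $I(s)\lesssim\langle s\rangle^{-M}$ in both regimes $s\le 1$ and $s>1$. The paper instead replaces $\langle\delta|z|\rangle^{-M}$ by the cruder $(\delta|z|)^{-M}$ and arrives at $t^M\delta^{-M}\|f\|$, which only recovers the required $\langle\delta/t\rangle^{-M}$ after combining with the uniform boundedness $\|(g\cdot\psi(t\cdot))(A)\|\lesssim\|g\|_\infty$ (implicit from the $H^\infty(S_\theta)$-calculus hypothesis, but not spelled out); your version avoids that extra appeal.

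One flaw in your parenthetical justification of the ``bookkeeping'': the observation $|\arg(-1/z)|>\pi/2$ only excludes $\zeta=-1/z$ from $S_\theta^+$, not from the bisector $S_\theta=S_\theta^+\cup S_\theta^-$. In fact, for $z=re^{i\theta'}\in\Gamma_{\theta'}$ one has $-1/z=r^{-1}e^{i(\pi-\theta')}\in S_\theta^-$, so $\zeta$ does \emph{not} lie in $\C\setminus S_\theta$ literally. This is the same point left tacit in the paper (where the proof states ``$(I-z^{-1}A)^{-1}\in OD_{1/|z|}(M)$ uniformly in $z\in S_\theta$'', which is not a literal consequence of the $\zeta\in\C\setminus S_\theta$ phrasing of Assumption~\ref{as:operator}); the intended content of the hypothesis is uniform off-diagonal bounds for the resolvents $(z-A)^{-1}$ at scale $1/|z|$ along contours $\Gamma_{\theta'}$ with $\theta'>\omega$. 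With that reading your argument is correct; but you should drop the specific ``$|\arg(-1/z)|>\pi/2$'' reasoning, as it does not by itself place $\zeta$ outside $S_\theta$.
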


\begin{proof}
Let us denote by \(\delta:=d(E,F)\) the `distance' 
of two Borel sets \(E\) and \(F\)
as defined previously. 
Then, using the fact that \((I-z^{-1}\,A)^{-1}\in OD_{1/|z|}(M)\) uniformly in \(z\in S_{\theta}\),
\begin{equation*}\begin{split}
  \|1_E &(g\cdot\psi(t\cdot))(A)1_F f\| \\
  &=\Big\|\frac{1}{2\pi i}\int_{\Gamma_{\theta'}}
     g(z)\psi(tz)1_E\big(I-\frac{1}{z}A\big)^{-1}1_F f\frac{\ud z}{z}\Big\| \\
  &\lesssim\int_{\Gamma_{\theta'}}\min\big\{(t|z|)^{M+\varepsilon},(t|z|)^{-\varepsilon}\big\}
     (\delta|z|)^{-M}\|f\|\frac{|\ud z|}{|z|} \\
  &\lesssim\int_0^{1/t}t^{M+\varepsilon}r^{M+\varepsilon}\cdot\delta^{-M}r^{-M}\|f\|\frac{\ud r}{r}
    +\int_{1/t}^{\infty}t^{-\varepsilon}r^{-\varepsilon}\cdot\delta^{-M}r^{-M}\|f\|\frac{\ud r}{r} \\
  &\eqsim t^M\delta^{-M}\|f\|,
\end{split}\end{equation*}
and this proves the claim.
\end{proof}

\begin{lemma}
\label{lem:od2}
Let \(\alpha,\beta,\varepsilon>0\), and
\begin{equation*}
 \psi\in \Psi_{\max\{M-\beta,\alpha\}+\varepsilon}^{\beta+\varepsilon}(S_{\theta}),\quad
 \widetilde{\psi} \in \Psi_{\max\{M-\alpha,\beta\}+\varepsilon}^{\alpha+\varepsilon}(S_{\theta}),
 \quad\phi \in \C1\oplus \Psi(S_{\theta}).
\end{equation*} 
Then 
\begin{equation*}
 \psi(tA) \phi(A) \widetilde{\psi}(sA) =
 \min\Big\{\big(\frac{t}{s}\big)^{\alpha},\big(\frac{s}{t}\big)^{\beta}\Big\} S_{t,s},
\end{equation*} 
where \((S_{t,s})_{t,s>0}\) is a uniformly bounded family of operators acting on
$L^2(H)$ such that \(S_{t,s}\in OD_{\max\{t,s\}}(M)\), uniformly in \(t\) and \(s\).
\end{lemma}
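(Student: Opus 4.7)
The plan is to identify the composition $\psi(tA)\phi(A)\widetilde\psi(sA)$ with a single operator of the form $F_{t,s}(A)$ obtained via the functional calculus, then divide out the factor $\min\{(t/s)^\alpha,(s/t)^\beta\}$ and reorganise the remaining function so that it fits the hypothesis of Lemma~\ref{lem:od1}. Writing $\phi=c1+\phi_0$ with $\phi_0\in\Psi(S_\theta)$ and applying the homomorphism and product identities of Lemma~\ref{lem:justification} to the two $\Psi$-factors $\psi(t\cdot)$ and $\widetilde\psi(s\cdot)$ (together with $\phi\in H^\infty(S_\theta)$), one obtains
\[
 \psi(tA)\phi(A)\widetilde\psi(sA)=F_{t,s}(A),\qquad F_{t,s}(z):=\psi(tz)\phi(z)\widetilde\psi(sz).
\]

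I would then split into the two cases $s\geq t$ and $t\geq s$ (by symmetry, only one of them needs to be written out in detail). In the first case $\min=(t/s)^\alpha$, and I set
\[
  \eta(w):=w^{-\alpha}\psi(w),\qquad h(w):=w^{\alpha}\widetilde\psi(w),
\]
so that $(s/t)^\alpha\psi(tz)\widetilde\psi(sz)=\eta(tz)\,h(sz)$, which gives
\[
 S_{t,s}=\Big(\frac{s}{t}\Big)^{\alpha}F_{t,s}(A)=\bigl(g_{t,s}\cdot h(s\cdot)\bigr)(A),\qquad g_{t,s}(z):=\eta(tz)\phi(z).
\]
A direct exponent computation, using $\psi\in\Psi^{\beta+\varepsilon}_{\max\{M-\beta,\alpha\}+\varepsilon}$ and $\widetilde\psi\in\Psi^{\alpha+\varepsilon}_{\max\{M-\alpha,\beta\}+\varepsilon}$, shows that
\[
  \eta\in\Psi^{\alpha+\beta+\varepsilon}_{\max\{M-\alpha-\beta,0\}+\varepsilon}\subseteq H^\infty(S_\theta),\qquad h\in\Psi^{\varepsilon}_{\max\{M,\alpha+\beta\}+\varepsilon}\subseteq\Psi^{\varepsilon}_{M+\varepsilon}.
\]
Consequently $g_{t,s}\in H^\infty(S_\theta)$ with $\|g_{t,s}\|_\infty\leq\|\eta\|_\infty\|\phi\|_\infty$, uniformly in $t\leq s$. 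The symmetric case $t\geq s$ is handled identically by swapping the roles of $\psi$ and $\widetilde\psi$ and using $\eta'(w)=w^{-\beta}\widetilde\psi(w)$ and $h'(w)=w^\beta\psi(w)$, which lie in the analogous classes.

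Now Lemma~\ref{lem:od1} applied with $(g,\psi)$ replaced by $(g_{t,s},h)$ (resp.\ $(g'_{t,s},h')$) yields that $S_{t,s}\in OD_s(M)$ (resp.\ $OD_t(M)$), i.e.\ $S_{t,s}\in OD_{\max\{t,s\}}(M)$, with a constant that depends linearly on $\|g_{t,s}\|_\infty$ and is therefore uniformly bounded in $(t,s)$. The uniform $L^2$-boundedness of $S_{t,s}$ is just the special case $E=F=\R^n$ of the off-diagonal estimate.

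The main obstacle is purely a bookkeeping one: the exponents $\max\{M-\beta,\alpha\}+\varepsilon$ and $\max\{M-\alpha,\beta\}+\varepsilon$ imposed on $\psi$ and $\widetilde\psi$ at the origin have been carefully tuned so that after absorbing a factor of $z^{\pm\alpha}$ (or $z^{\pm\beta}$) the resulting pieces split cleanly into an $H^\infty$-factor and a $\Psi^{\varepsilon}_{M+\varepsilon}$-factor in both the $s\geq t$ and $t\geq s$ regimes; verifying that all four exponent inequalities remain valid is what the hypothesis on $\psi,\widetilde\psi$ is designed for.
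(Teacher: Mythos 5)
Your argument is correct and is essentially the same as the paper's: you extract the scalar factor $(t/s)^{\pm\alpha}$ or $(s/t)^{\pm\beta}$ by absorbing monomials $z^{\pm\alpha}$, $z^{\pm\beta}$ into $\psi$ and $\widetilde\psi$, obtaining on one side a uniformly $H^\infty$-bounded factor and on the other a function in $\Psi_{M+\varepsilon}^{\varepsilon}$ evaluated at the larger scale, to which Lemma~\ref{lem:od1} applies. (As a small aside: your exponent bookkeeping for $\eta$ is sharper than what the paper records, and your $t\geq s$ factorisation with $w^{-\beta}\widetilde\psi(w)$ and $w^{\beta}\psi(w)$ corrects a typographical slip in the paper's definition of $\widetilde\psi_1$.)
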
 

\begin{proof}
We have
\begin{equation*}
  \psi(tA) \phi(A) \widetilde{\psi}(sA)
  = (t/s)^{\alpha}\psi_0(tA)\phi(A)\widetilde\psi_0(sA)
  = (s/t)^{\beta}\psi_1(tA)\phi(A)\widetilde\psi_1(sA),
\end{equation*}
where
\begin{alignat*}{2}
    &\psi_0(z) :=z^{-\alpha}\psi(z)\in\Psi^{\alpha+\beta+\varepsilon}_{\varepsilon},\qquad
    &\widetilde\psi_0(z) :=z^{\alpha}\widetilde\psi(z)\in\Psi^{\varepsilon}_{M+\varepsilon}, \\
    &\psi_1(z) :=z^{\beta}\psi(z)\in\Psi^{\varepsilon}_{M+\varepsilon},\qquad
    &\widetilde\psi_1(z) :=z^{-\alpha}\widetilde\psi(z)\in\Psi^{\alpha+\beta+\varepsilon}_{\varepsilon}.
\end{alignat*}
The case \(s\geq t\) of the claim follows from  Lemma~\ref{lem:od1} (with \(s\) in playing the role of \(t\) in that Lemma) with \(g(z)=\psi_0(tz)\phi(z)\) and \(\widetilde\psi_0\) in place of \(\psi\), while for the other case we take \(g(z)=\phi(z)\widetilde\psi_1(sz)\) and \(\psi_1\) in place of \(\psi\).
\end{proof}

\begin{proposition}\label{prop:Schur}
Let \(\psi,\tilde\psi\in\Psi(S_{\theta})\) and \(\phi\in\C1\oplus\Psi(S_{\theta})\). Then
\begin{equation*}
  (TF)_t=\int_0^{\infty}\psi(tA)\phi(A)\psi(sA)F_s\frac{\ud s}{s}
\end{equation*}
extends to a bounded operator on \(T^{p,2}(H;X)\) if at least one of the following conditions is satisfied:
\begin{enumerate}
  \item[$(a)$] \(M>n/\tau\), \(\psi\in\Psi^{n(1/\tau-1/2)+\varepsilon}_{n/2+\varepsilon}\), and
     \(\tilde\psi\in\Psi^{n/2+\varepsilon}_{n(1/\tau-1/2)+\varepsilon}\),
  \item[$(c)$] \(M>\max\{n/\tau,n(1-1/\gamma)\}\),
  \(\psi\in\Psi^{n(1/\tau-1/2)+\varepsilon}_{n/2+n\max\{1/\gamma'-1/\tau,0\} +\varepsilon}\), \\ and
    \(\tilde\psi\in\Psi^{n(1/2-1/\gamma)+\varepsilon}_{n/2+n\max\{1/\tau-1/\gamma',0\}+\varepsilon}\),
  \item[$(d)$] \(M>n(1-1/\gamma)\), \(\psi\in\Psi^{n/2+\varepsilon}_{n(1/2-1/\gamma)+\varepsilon}\), and
     \(\tilde\psi\in\Psi^{n(1/2-1/\gamma)+\varepsilon}_{n/2+\varepsilon}\),
\end{enumerate}
where \(\varepsilon>0\) is arbitrary.
\end{proposition}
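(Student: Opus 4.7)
The plan is to combine Lemma~\ref{lem:od2} (to factor out the diagonal decay) with Corollary~\ref{cor:Schur} (to integrate the remaining singular kernel) in a completely mechanical way. Reading the integrand $\psi(tA)\phi(A)\tilde\psi(sA)$ through Lemma~\ref{lem:od2} with parameters $\alpha,\beta,M'$ to be chosen, one gets
\begin{equation*}
  \psi(tA)\phi(A)\tilde\psi(sA)=\min\bigl\{(t/s)^\alpha,(s/t)^\beta\bigr\}\,S_{t,s},
\end{equation*}
with $(S_{t,s})$ uniformly bounded on $L^2(H)$ and $S_{t,s}\in OD_{\max\{t,s\}}(M')$. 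Then $T$ has exactly the form of Corollary~\ref{cor:Schur}, and one of conditions~(a), (c), (d) there will give the desired boundedness on $T^{p,2}(H;X)$, matching the three cases of the proposition.

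The only substance is the parameter matching. The key observation is that the resolvent hypothesis yields $OD_t(M)\subseteq OD_t(M')$ for every $M'\le M$ (with the same constant), so in applying Lemma~\ref{lem:od2} one is free to replace $M$ by any $M'\in(n\min\{1/\tau,1-1/\gamma\},M]$. For case~(a) I would choose
\[
  \alpha=\tfrac{n}{2}+\eta,\qquad\beta=n(\tfrac{1}{\tau}-\tfrac{1}{2})+\eta,\qquad M'=\tfrac{n}{\tau}+\tfrac{\eta}{2},
\]
with $\eta$ small enough that $M'\le M$ and that $\eta+\varepsilon_L\le\varepsilon$. The decay conditions $\beta+\varepsilon_L\le n(1/\tau-1/2)+\varepsilon$ on $\psi$ and $\alpha+\varepsilon_L\le n/2+\varepsilon$ on $\tilde\psi$ then hold. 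For the vanishing conditions the critical computation is $M'-\beta=n/2+\eta/2-\eta<n/2+\varepsilon$ and similarly $M'-\alpha=n(1/\tau-1/2)+\eta/2-\eta<n(1/\tau-1/2)+\varepsilon$, so that $\max\{M'-\beta,\alpha\}+\varepsilon_L$ and $\max\{M'-\alpha,\beta\}+\varepsilon_L$ fit inside the vanishing orders $n/2+\varepsilon$ and $n(1/\tau-1/2)+\varepsilon$ prescribed in Prop.~(a). Then Corollary~\ref{cor:Schur}(a) applies since $M'>n/\tau$, $\alpha>n/2$, $\beta>n(1/\tau-1/2)$. Cases~(c) and~(d) are done analogously with $\alpha,\beta$ slightly above $n(1/2-1/\gamma),n(1/\tau-1/2)$ respectively (for~(c)) or $n(1/2-1/\gamma),n/2$ (for~(d)), and $M'$ just above $\max\{n/\tau,n(1-1/\gamma)\}$ or $n(1-1/\gamma)$; the apparent asymmetry of the vanishing orders $n/2+n\max\{1/\gamma'-1/\tau,0\}+\varepsilon$ in Prop.~(c) arises precisely because of the identity $n/\tau+n\max\{1/\gamma'-1/\tau,0\}=\max\{n/\tau,n(1-1/\gamma)\}$, so the computation of $M'-\beta$ lands exactly at the hypothesized vanishing.

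The main obstacle is purely one of bookkeeping: a naive attempt to apply Lemma~\ref{lem:od2} with the full $M$ from the hypothesis fails whenever $M$ exceeds $n/\tau$ (resp.\ $n(1-1/\gamma)$) by more than $\varepsilon$, because $M-\beta$ would then force vanishing at $0$ of $\psi$ stronger than the given $n/2+\varepsilon$. The fix, which must be noticed, is that the $M$ in Lemma~\ref{lem:od2} is only a local tool and can always be shrunk to any $M'\le M$; doing so makes the three cases of the proposition slot exactly into the three cases of Corollary~\ref{cor:Schur} with no further work.
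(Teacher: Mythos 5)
Your proposal is correct and follows exactly the route the paper intends: the paper's proof consists of a single sentence (``directly, if slightly tediously, verified as a corollary of Lemma~\ref{lem:od2} and Corollary~\ref{cor:Schur}''), and you have carried out the tedium. The key observation that makes the bookkeeping close --- replacing $M$ by a smaller $M'$ in Lemma~\ref{lem:od2} via the trivial inclusion $OD_t(M)\subseteq OD_t(M')$ for $M'\le M$, so that $M'-\beta$ and $M'-\alpha$ land just below the vanishing orders actually assumed on $\psi,\tilde\psi$ --- is indeed what makes the three cases of the proposition slot into the three cases of Corollary~\ref{cor:Schur}, and your parameter choices verify this cleanly. (In case~(c) it is perhaps worth noting explicitly that $n/\tau+n\max\{1/\gamma'-1/\tau,0\}=n\max\{1/\tau,1/\gamma'\}$ with $1/\gamma'=1-1/\gamma$, which is the identity behind the asymmetric vanishing orders, as you point out.)
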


\begin{proof}
This is directly, if slightly tediously, verified as a corollary of Lemma~\ref{lem:od2} and Corollary~\ref{cor:Schur}, so that the different conditions of Proposition~\ref{prop:Schur} correspond to those of Corollary~\ref{cor:Schur}.
\end{proof}

\begin{definition}\label{def:decay}
We say that a pair of functions \((\psi,\tilde\psi)\in\Psi(S_{\theta})\times\Psi(S_{\theta})\) has \emph{sufficient decay} if they verify at least one of the conditions $(a)$, $(c)$, or $(d)$ of Proposition~\ref{prop:Schur}.
\end{definition}

\begin{remark}\label{rem:decay}
$(i)$ Note that the notion of sufficient decay as defined above assumes that the parameters appearing in Assumptions~\ref{as:space} and \ref{as:operator} have been fixed. Also observe that if the parameters are such that for instance \(n(1-1/\gamma)<M\leq n/\tau\), then only the condition $(d)$ above is applicable.

$(ii)$ If \((\psi,0)\in\Psi(S_{\theta})\times \Psi(S_{\theta})\) has sufficient decay, by Calder\'on's reproducing formula there exists a \(\tilde\psi\in\Psi(S_{\theta})\) which satisfies \eqref{eq:Calderon} and decays as rapidly as desired; in particular, we may arrange so that the pair \((\psi,\tilde\psi)\) also has sufficient decay. A similar remark applies if we start from a \(\tilde\psi\in \Psi(S_{\theta})\) such that \((0,\tilde\psi)\) has sufficient decay.
\end{remark}


For $f = \sum \limits _i g_i \otimes \xi_i\in L^2\otimes X$ and $\psi\in \Psi(S_\theta)$ 
we shall write $$(Q_{\psi}f)(y,t) := \sum \limits _i \psi(tA)g_i(y) \otimes \xi_i:= \psi(tA)f(y).$$

\begin{definition}
For \(1 \leq p < \infty\) and a non-degenerate $\psi\in \Psi(S_\theta)$, the {\em Hardy space
$H^{p} _{A, \psi} (X)$ associated with \(A\) and \(\psi\)} is the completion of
the space 
\begin{equation*}
  \{f\in \overline{\ran(A)}\otimes X\subseteq L^2(H)\otimes X: \ Q_{\psi}f\in T^{p,2}(X)\}
\end{equation*}
with respect to the norm
\begin{equation*}
 \n f\n_{H^{p} _{A, \psi} (X)}:= \n Q_{\psi}f
 \n_{ T^{p,2}(H;X)}.
\end{equation*}
\end{definition}

It is clear that \(\n\cdot\n_{H^{p} _{A, \psi} (X)}\) is a seminorm
on \(\overline{\ran(A)}\otimes X\); that it is actually a norm will be seen shortly.


By definition, the operator
\begin{equation*}
  (Q_{\psi} f)(\cdot,t):= \psi(tA)f
\end{equation*}  
embeds the Hardy space \(H^p_{A,\psi}(H;X)\) isometrically into the tent space \(T^{p,2}(H;X)\). Of importance will also be another operator acting to the opposite direction.
For \(\widetilde\psi\in\Psi(S_{\theta})\),
we define \(S_{\widetilde{\psi}} f\in L^2(\R^n;H)\otimes X\) by 
\begin{equation}\label{eq:Spsi}
  S_{\widetilde\psi} F:= \int_0^\infty \widetilde\psi(sA)F(s,\cdot)\frac{\ud s}{s}
\end{equation}
for those functions \(F\in L^1_{\mathrm{loc}}(\R_+;L^2(\R^n;H))\otimes X\) for which the integral exists as a limit in \(L^2(H)\) of the finite integrals \(\int_a^b\), where \(a\to 0\) and \(b\to\infty\).

By Calder\'on's reproducing formula, for a given \(\psi\in\Psi(S_{\theta})\), there exists a \(\tilde\psi\in\Psi(S_{\theta})\) such that the defining formula~\eqref{eq:Spsi} makes sense for all \(F\in Q_{\psi}(\overline{\ran(A)}\otimes X)\), and we have
\begin{equation}\label{eq:SQ}
  S_{\tilde\psi}Q_{\psi}f=f,\qquad f\in \overline{\ran(A)}\otimes X.
\end{equation}
Hence, if \(\|f\|_{H^p_{A,\psi}(H;X)}=0\) for some \(f\in\overline{\ran(A)}\otimes X\), this means by definition that \(Q_{\psi}f=0\), and the identity \eqref{eq:SQ} yields immediately \(f=0\). Thus \(\|\cdot\|_{H^p_{A,\psi}(H;X)}=0\) is indeed a norm.

\begin{proposition}\label{prop:S_psi}
Let \((\psi,\widetilde\psi)\in\Psi(S_{\theta})\times \Psi(S_{\theta})\) 
be a pair with sufficient decay.
If \(f\in T^{p,2}(H;X)\) is such that the defining formula \eqref{eq:Spsi} is valid, then  \(S_{\widetilde\psi} f \in H^{p} _{A, \psi} (H;X)\), and the mapping \(f\mapsto S_{\widetilde\psi} f\) extends uniquely 
to a bounded operator from \(T^{p,2}(H;X)\) to \(H^{p}_{A,\psi} (H;X)\).
\end{proposition}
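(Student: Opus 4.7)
The plan is to reduce the statement directly to the boundedness results of the previous section. First I would work on a suitable dense subspace of $T^{p,2}(H;X)$, for instance functions $f=\sum_i \mathbf 1_{[a_i,b_i]}(\cdot)g_i(\cdot)\otimes\xi_i$ with $g_i\in L^2(\R^n;H)$, $\xi_i\in X$, and $0<a_i<b_i<\infty$, for which the defining integral \eqref{eq:Spsi} for $S_{\widetilde\psi} f$ converges absolutely in $L^2(H)$. On this subspace, I would like to verify the single norm estimate
\begin{equation*}
  \|S_{\widetilde\psi} f\|_{H^p_{A,\psi}(H;X)}=\|Q_\psi S_{\widetilde\psi} f\|_{T^{p,2}(H;X)}\lesssim\|f\|_{T^{p,2}(H;X)},
\end{equation*}
after which the proposition follows by a standard density and completion argument.

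For such $f$, the boundedness of $\psi(tA)$ on $L^2(H)$ and Fubini's theorem (or a direct absolute convergence estimate) let me commute $\psi(tA)$ past the $s$-integral, so that
\begin{equation*}
  (Q_\psi S_{\widetilde\psi} f)(y,t)=\int_0^\infty \psi(tA)\,\widetilde\psi(sA)\,f(s,\cdot)(y)\,\tfrac{\ud s}{s}.
\end{equation*}
Now I would apply Lemma~\ref{lem:od2} with $\phi=1$, which yields the factorisation
\begin{equation*}
  \psi(tA)\widetilde\psi(sA)=\min\bigl\{(t/s)^\alpha,(s/t)^\beta\bigr\}\,S_{t,s},
\end{equation*}
where $(S_{t,s})_{t,s>0}$ is a uniformly bounded family in $\calL(L^2(H))$ with $S_{t,s}\in OD_{\max\{t,s\}}(M)$ uniformly. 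Consequently,
\begin{equation*}
  Q_\psi S_{\widetilde\psi} f=T f,\qquad
  (Tf)_t=\int_0^\infty\min\bigl\{(t/s)^\alpha,(s/t)^\beta\bigr\}S_{t,s}f_s\,\tfrac{\ud s}{s}.
\end{equation*}

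The hypothesis that $(\psi,\widetilde\psi)$ has sufficient decay is precisely tailored so that the exponents $\alpha,\beta$ supplied by Lemma~\ref{lem:od2}, together with the order $M$, satisfy (at least) one of the conditions (a), (c), (d) of Corollary~\ref{cor:Schur}; I would check this correspondence case by case, which is routine but needs care to match exponents from $\Psi_{\cdot}^{\cdot}$ in Proposition~\ref{prop:Schur} with the $\alpha$, $\beta$ that Lemma~\ref{lem:od2} extracts. Corollary~\ref{cor:Schur} then asserts exactly that $T$ extends to a bounded operator on $T^{p,2}(H;X)$, which gives the desired norm estimate on the dense subspace.

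Finally, to extend $S_{\widetilde\psi}$ to all of $T^{p,2}(H;X)$, I would use the already established inequality on the dense subspace together with the completeness of $H^p_{A,\psi}(H;X)$ (by definition); uniqueness is immediate since the extension is required to be bounded and agrees on a dense set. The main technical nuisance, as noted, will not be any deep estimate but the careful bookkeeping verifying that the decay exponents of $\psi$ and $\widetilde\psi$ indeed fall into one of the three admissible configurations of Proposition~\ref{prop:Schur}; everything else is a direct invocation of the machinery built in Sections~\ref{sect:od} and~\ref{sect:calculus}.
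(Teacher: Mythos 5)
Your overall strategy matches the paper's: commute $\psi(tA)$ past the $s$-integral to rewrite $Q_\psi S_{\widetilde\psi}f$ as an operator of the form treated in Section~\ref{sect:od}, then use the boundedness result together with a density argument. One slight inefficiency is that you re-derive the key estimate from Lemma~\ref{lem:od2} and Corollary~\ref{cor:Schur}; the paper simply invokes Proposition~\ref{prop:Schur} directly (with $\phi=1$), and since ``sufficient decay'' is \emph{defined} in Definition~\ref{def:decay} precisely as satisfying one of that proposition's conditions, no case-by-case bookkeeping is needed at this point --- that work was already packaged into Proposition~\ref{prop:Schur}.

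There is, however, a genuine gap: you never check that $g:=S_{\widetilde\psi}f$ lies in $\overline{\ran(A)}\otimes X$. This matters because $H^p_{A,\psi}(H;X)$ is by definition the completion of a subspace of $\overline{\ran(A)}\otimes X$, so the claim ``$S_{\widetilde\psi}f\in H^p_{A,\psi}(H;X)$'' requires verifying this membership in addition to the norm estimate $\|Q_\psi g\|_{T^{p,2}(H;X)}\lesssim\|f\|_{T^{p,2}(H;X)}$. The paper handles it in one line: $\widetilde\psi(sA)f(\cdot,s)\in\overline{\ran(A)}$ for each $s$ by Lemma~\ref{lem:ranges}, and Bochner integration in $L^2(H)$ preserves the closed subspace $\overline{\ran(A)}$. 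This is easy but cannot be omitted, since without it the stated conclusion does not literally make sense. A secondary, smaller point: the first assertion of the proposition is about \emph{every} $f$ for which the defining formula~\eqref{eq:Spsi} is valid, not only those in a convenient dense class; the paper's argument runs for any such $f$ and uses density only at the very end to build the extension, whereas your restriction to a dense subclass from the outset leaves the membership claim for general admissible $f$ unaddressed (though it could be recovered by an approximation argument if one insisted).
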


\begin{proof}
Write $g:=S_{\widetilde\psi} f$. First we check that
\(g\in \overline{\ran(A)}\otimes X\): this is clear from the defining formula, since \(\psi(sA)f(\cdot,s)\in\overline{\ran(A)}\) for each \(s>0\) by Lemma~\ref{lem:ranges}, and Bochner integration in the Banach space \(L^2(H)\) preserves the closed subspace \(\overline{\ran(A)}\).

By Proposition~\ref{prop:Schur},
\begin{equation*}
  (y,t)\mapsto \psi(tA)g (y) =
  \int_0^\infty \psi(tA)\widetilde\psi(sA)f(y,s)\frac{\ud s}{s}
\end{equation*}
defines an element $\psi(\cdot A)g$ of $T^{p,2}(H;X)$ and we have
\begin{equation*}
 \n S_{\widetilde\psi} f\n_{H^{p} _{A, \psi} (H;X)} = \n \psi(\cdot A)g\n_{T^{p,2}(H;X)}
  \lesssim \n f\n_{T^{p,2}(H;X)}.
\end{equation*}
The subspace of \(T^{p,2}(H;X)\) where the defining formula~\eqref{eq:Spsi} is valid contains e.g.~\(C_c(H)\otimes X\) and is therefore dense in \(T^{p,2}(H;X)\). Hence the mapping 
\(S_{\widetilde\psi}\) has a unique extension to a bounded operator from 
$T^{p,2}(H;X)$ to $H^p_{A,\psi}(H;X)$.
\end{proof}

Next we show that \(H^{p} _{A, \psi} (H;X)\) is independent of \(\psi \in
\Psi(S_{\theta})\), provided $(\psi,0)$ has sufficient decay.
A typical function with this property is 
\begin{equation*}
  \psi(z)=(\sqrt{z^{2}})^{n(\frac{1}{2}-\frac{1}{\gamma})+1}e^{-\sqrt{z^{2}}},
\end{equation*}
where \(\gamma\) denotes the cotype of $L^{p}(X)$. This gives the classical definition by the Poisson kernel when $X=\C$ and $1<p\leq 2$, taking \(\gamma=2\).

\begin{theorem}
\label{thm:hardy}
Let \(\psi, \underline{\psi} \in \Psi(S_{\theta})\) be two functions such that \((\psi,0)\) and \((\underline\psi,0)\) have sufficient decay. Then:
\begin{enumerate}
\item[\rm(i)]
\(H^{p} _{A,\psi} (H;X) = H^{p}_{A, \underline{\psi}}(H;X) =: H_A^p(H;X)\).
\item[\rm(ii)]
\(A\) has an \(H^{\infty}\)-functional calculus on \(H^{p} _{A}(H;X)\).
\end{enumerate}
\end{theorem}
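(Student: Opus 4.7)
The plan is to reduce both parts of the theorem to the boundedness on \(T^{p,2}(H;X)\) of operators of the form appearing in Proposition~\ref{prop:Schur}, applied via Calder\'on's reproducing formula (Lemma~\ref{lem:reproducing}).

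\emph{Part (i).} I will pick, using Lemma~\ref{lem:newPsi}, a function \(\widetilde\psi\in\Psi(S_\theta)\) satisfying the reproducing identity~\eqref{eq:Calderon} for \(\psi\) and with such large decay exponents at both \(0\) and \(\infty\) that the pair \((\underline\psi,\widetilde\psi)\) has sufficient decay in the sense of Definition~\ref{def:decay}; this is possible because both \eqref{eq:Calderon} and the sufficient-decay condition for \((\underline\psi,\widetilde\psi)\) (given that \((\underline\psi,0)\) already has sufficient decay) are lower bounds on the decay of \(\widetilde\psi\), and Lemma~\ref{lem:newPsi} lets us make this decay arbitrarily strong. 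For \(f\in\overline{\ran(A)}\otimes X\) with \(Q_\psi f\in T^{p,2}(H;X)\), Lemma~\ref{lem:reproducing} gives
\[
  Q_{\underline\psi}f(\cdot,t)
  =\underline\psi(tA)f
  =\int_0^\infty \underline\psi(tA)\widetilde\psi(sA)\,(Q_\psi f)(\cdot,s)\,\frac{\ud s}{s}
  =: T_1(Q_\psi f)(\cdot,t),
\]
and Proposition~\ref{prop:Schur} with \(\phi=1\) bounds \(T_1\) on \(T^{p,2}(H;X)\). Hence \(\|f\|_{H^p_{A,\underline\psi}(H;X)}\lesssim \|f\|_{H^p_{A,\psi}(H;X)}\); swapping the roles of \(\psi\) and \(\underline\psi\) yields the converse.

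\emph{Part (ii).} Now I will pick \(\widetilde\psi\) so that \((\psi,\widetilde\psi)\) itself has sufficient decay in addition to~\eqref{eq:Calderon}, again possible by Remark~\ref{rem:decay}(ii) applied to \((\psi,0)\). For \(g\in H^\infty(S_\theta)\) and \(f\in\overline{\ran(A)}\otimes X\cap H^p_{A,\psi}(H;X)\), the operator \(g(A)\) is defined through the \(L^2(H)\)-calculus tensored with \(I_X\); the homomorphism and commutation properties of Lemma~\ref{lem:justification} combined with Lemma~\ref{lem:reproducing} then give
\[
  Q_\psi\bigl(g(A)f\bigr)(\cdot,t)
  =\int_0^\infty \psi(tA)\,g(A)\,\widetilde\psi(sA)\,(Q_\psi f)(\cdot,s)\,\frac{\ud s}{s}
  =: T_g(Q_\psi f)(\cdot,t).
\]
Lemma~\ref{lem:od2} expresses the kernel \(\psi(tA)g(A)\widetilde\psi(sA)\) as \(\min\{(t/s)^\alpha,(s/t)^\beta\}S_{t,s}\) with off-diagonal constants linearly controlled by \(\|g\|_\infty\) via Lemma~\ref{lem:od1}, so Proposition~\ref{prop:Schur} yields \(\|T_g\|_{\calL(T^{p,2}(H;X))}\lesssim\|g\|_\infty\). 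This gives \(\|g(A)f\|_{H^p_A(H;X)}\lesssim\|g\|_\infty\|f\|_{H^p_A(H;X)}\), and a density argument extends \(g(A)\) to a bounded operator on \(H^p_A(H;X)\); multiplicativity \(g_1(A)g_2(A)=(g_1g_2)(A)\) transfers from the \(L^2(H)\)-calculus on the dense tensor subspace, yielding the \(H^\infty\)-calculus.

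\emph{Where the difficulty lies.} The real work will be the bookkeeping of decay exponents. First, Proposition~\ref{prop:Schur} is stated only for \(\phi\in\C 1\oplus\Psi(S_\theta)\), but the proof of Lemma~\ref{lem:od2} uses no more than \(\|\phi\|_\infty\) through Lemma~\ref{lem:od1} (which depends linearly on the \(H^\infty\)-norm of its multiplier), so the estimates will extend verbatim to \(\phi=g\in H^\infty(S_\theta)\). Second, for each of the sufficient-decay classes (a), (c), (d) satisfied by \((\psi,0)\) or \((\underline\psi,0)\), I will have to check that the freedom afforded by Lemma~\ref{lem:newPsi} in choosing \(\widetilde\psi\) is enough to match the decay requirements in the corresponding condition---a mechanical case-by-case verification once the exponents of \(\psi\) and \(\underline\psi\) are matched against those appearing in (a), (c), (d) of Proposition~\ref{prop:Schur}.
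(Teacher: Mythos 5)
Your proposal is correct and follows essentially the same route as the paper: insert the Calder\'on reproducing formula with a \(\widetilde\psi\) of arbitrarily large decay (Lemma~\ref{lem:newPsi}), rewrite the Hardy norm as the \(T^{p,2}\)-norm of an operator of the type covered by Proposition~\ref{prop:Schur}, and then check the sufficient-decay condition is met. The paper streamlines this by treating both parts at once with a single \(\phi\in\C1\oplus\Psi(S_\theta)\): the inequality \(\|\phi(A)f\|_{H^p_{A,\psi}}\lesssim\|f\|_{H^p_{A,\underline\psi}}\) gives (i) when \(\phi=1\) and (ii) when \(\phi\in\Psi(S_\theta)\), which is precisely the class used in the paper's definition of a bounded \(H^\infty\)-calculus, so the extension to general \(g\in H^\infty(S_\theta)\) that you describe (though valid, and indeed supported by the fact that Lemma~\ref{lem:od1} is stated for general \(g\in H^\infty\) and only uses \(\|g\|_\infty\)) is not needed. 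Your observation that the two constraints on \(\widetilde\psi\)---satisfying~\eqref{eq:Calderon} and completing a sufficient-decay pair---are both lower bounds on decay and hence compatible is exactly the mechanism the paper relies on, captured there in Remark~\ref{rem:decay}(ii).
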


\begin{proof}
Let \(\phi \in\C1\oplus \Psi(S_{\theta})\) be arbitrary and fixed.
Let \(f\in \overline{\ran(A)}\otimes X\). By Calder\'on's reproducing formula, there exists
\(\widetilde{\psi} \in  \Psi(S_{\theta})\) (with any prescribed decay) such that
\begin{equation*}
  \psi(tA)\phi(A)f 
  =\int_{0}^{\infty} \psi(tA)\phi(A)\widetilde{\psi}(sA)\underline{\psi}(sA)f \frac{\ud s}{s}.
\end{equation*}
Thus \[\|\phi(A)f\|_{H^{p}_{A,\psi}(H;X)}=\| TQ_{\underline{\psi}}f\|
_{T^{p,2}(H;X)},\] where $T$ is the operator on $T^{p,2}(H;X)$ given by
\begin{equation*}
  TF(y,t) = 
  \int _{0} ^{\infty} \psi(tA)\phi(A)\widetilde{\psi}(sA)F(y,s)\frac{\ud s}{s}.
\end{equation*}
From Proposition~\ref{prop:Schur} we deduce that
\begin{equation*}
  \|\phi(A)f\|_{H^{p}_{A,\psi}(H;X)}
 \lesssim 
  \|Q_{\underline{\psi}}f\|_{T^{p,2}(H;X)} =
  \|f\|_{H^{p}_{A,\underline{\psi}}(H;X)}.
\end{equation*}
Taking $\phi=1$, this gives (i). Taking $\phi\in \Psi(S_\theta)$, we obtain
(ii). 
\end{proof}

The following, by now quite simple result has some useful consequences:

\begin{proposition}
If \((0,\widetilde\psi)\) has sufficient decay, then the bounded mapping \(S_{\widetilde\psi}: T^{p,2}(H;X)\to H^p_A(H;X)\)
is surjective.
\end{proposition}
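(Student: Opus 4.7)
The plan is to exhibit an explicit right inverse to $S_{\widetilde\psi}$, which immediately gives surjectivity. The natural candidate is $Q_\psi$ for a carefully chosen $\psi\in\Psi(S_\theta)$ paired with $\widetilde\psi$ via Calder\'on's reproducing formula.

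First, applying Lemma \ref{lem:newPsi}, I would choose $\psi\in\Psi(S_\theta)$ with arbitrarily fast prescribed decay at $0$ and $\infty$ such that the identity $\int_0^\infty \psi(tz)\widetilde\psi(tz)\frac{\ud t}{t}=1$ holds on $S_\theta$. By selecting the decay of $\psi$ large enough, Remark \ref{rem:decay}(ii) ensures that both $(\psi,0)$ and $(\psi,\widetilde\psi)$ have sufficient decay. Consequently, Theorem \ref{thm:hardy}(i) identifies $H^p_{A,\psi}(H;X)$ with $H^p_A(H;X)$ (with equivalent norms), so by definition $Q_\psi$ extends to an isometric embedding of $H^p_A(H;X)$ into $T^{p,2}(H;X)$.

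The claim is then that $S_{\widetilde\psi}\circ Q_\psi$ equals the identity on $H^p_A(H;X)$. Proposition \ref{prop:S_psi}, applied to the pair $(\psi,\widetilde\psi)$ which we arranged to have sufficient decay, shows that this composition is bounded on $H^p_A(H;X)$. On the dense subspace $\overline{\ran(A)}\otimes X$ (dense in $H^p_A(H;X)$ by construction), Calder\'on's reproducing formula (Lemma \ref{lem:reproducing}), extended linearly across the tensor factor $X$, yields
\[
 S_{\widetilde\psi}Q_\psi f
 =\int_0^\infty \widetilde\psi(sA)\psi(sA)f\frac{\ud s}{s}
 =f,
\]
where the integral is interpreted as an indefinite Riemann integral in $L^2(H)\otimes X$ (applied componentwise on each $\overline{\ran(A)}$ tensor factor). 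By density and continuity, $S_{\widetilde\psi}\circ Q_\psi=\mathrm{id}$ on all of $H^p_A(H;X)$. Surjectivity follows: given any $f\in H^p_A(H;X)$, the element $F:=Q_\psi f\in T^{p,2}(H;X)$ satisfies $S_{\widetilde\psi}F=f$.

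The main technical point, which is really not an obstacle, is the coordinated choice of $\psi$ meeting three simultaneous requirements --- Calder\'on's identity with $\widetilde\psi$, sufficient decay of $(\psi,0)$ to legitimately define the Hardy norm, and sufficient decay of $(\psi,\widetilde\psi)$ to make the composition bounded. Remark \ref{rem:decay}(ii) is tailor-made for exactly this packaging, so once invoked, the entire argument is a soft assemblage of results already established in the paper.
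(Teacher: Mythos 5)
Your proof is correct and follows essentially the same route as the paper: choose $\psi$ via Remark~\ref{rem:decay} so that \eqref{eq:SQ} holds and both $(\psi,0)$ and $(\psi,\widetilde\psi)$ have sufficient decay, observe that $Q_\psi$ is (by definition of the Hardy norm) an isometry into $T^{p,2}(H;X)$ while $S_{\widetilde\psi}$ is bounded in the opposite direction by Proposition~\ref{prop:S_psi}, and extend the Calder\'on identity $S_{\widetilde\psi}Q_\psi f=f$ from the dense subspace by continuity. The paper phrases the continuity step as an explicit Cauchy-sequence argument and leaves the sufficient decay of $(\psi,0)$ implicit, while you make both points explicit (and are slightly imprecise only in calling the dense subspace $\overline{\ran(A)}\otimes X$ rather than the part of it where $Q_\psi f\in T^{p,2}$, a harmless slip); otherwise the two arguments coincide.
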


\begin{proof}
By Remark~\ref{rem:decay}, we find a
\(\psi\in \Psi(S_{\theta})\)
such that~\eqref{eq:SQ} is satisfied and \((\psi,\widetilde\psi)\) has sufficient decay.
Now let \(f\in H_A^p(H;X) = H^p_{A,\psi}(H;X)\) be arbitrary
and let $\limn f_n = f$ in $H_{A,\psi}^p(H;X)$ with  
\(f_n\in \overline{\ran(A)}\otimes X\). 
The functions $g_n:=Q_{\psi} f_n$ belong to $T^{p,2}(H;X)$
and $\n g_n - g_m \n_{T^{p,2}(H;X)} = \n f_n - f_m\n_{H_{A,\psi}^p(H;X)}$
for all $m,n$. It follows that the sequence $(f_n)$ is Cauchy in 
$T^{p,2}(H;X)$ and therefore converges to some $f\in T^{p,2}(H;X)$.
From \(f_n =  S_{\widetilde\psi} g_n\) and the continuity of \(S_{\widetilde\psi}\)
it follows that \(f = S_{\widetilde\psi} g\).
\end{proof}

\begin{corollary}
Let \((0,\widetilde\psi)\) have sufficient decay. An equivalent description of the Hardy space is
\begin{equation*}
  H^p_A(H;X)=\widetilde{H}^p_{A,\widetilde\psi}(H;X):=\{S_{\widetilde\psi}F:F\in T^{p,2}(H;X)\},
\end{equation*}
and an equivalent norm is given by
\begin{equation*}
  \|f\|_{\widetilde{H}^p_{A,\widetilde\psi}(H;X)}:=\inf\{\|F\|_{T^{p,2}(H;X)}:f=S_{\widetilde\psi}F\}.
\end{equation*}
\end{corollary}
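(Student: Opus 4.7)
The plan is to obtain the set-theoretic identity directly from the preceding proposition and then to establish the two-sided norm bound by pairing the boundedness of $S_{\widetilde\psi}$ with a Calder\'on-type reproducing argument.

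The set equality $\widetilde H^p_{A,\widetilde\psi}(H;X)=H^p_A(H;X)$ is already the content of the preceding proposition, which asserts the surjectivity of $S_{\widetilde\psi}:T^{p,2}(H;X)\to H^p_A(H;X)$. Boundedness of $S_{\widetilde\psi}$ then immediately yields
\[
  \|f\|_{H^p_A(H;X)}=\|S_{\widetilde\psi}F\|_{H^p_A(H;X)}\lesssim\|F\|_{T^{p,2}(H;X)}
\]
for any representation $f=S_{\widetilde\psi}F$, and passing to the infimum over such $F$ gives one half of the asserted norm equivalence.

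For the reverse inequality, I would first apply Remark~\ref{rem:decay}(ii) to produce a $\psi\in\Psi(S_\theta)$ such that the pair $(\psi,\widetilde\psi)$ has sufficient decay and the Calder\'on identity $\int_0^\infty\psi(tz)\widetilde\psi(tz)\,\tfrac{\ud t}{t}=1$ holds. Since $(\psi,0)$ then automatically also has sufficient decay, Theorem~\ref{thm:hardy}(i) identifies $H^p_A(H;X)=H^p_{A,\psi}(H;X)$ with equivalent norms, and $Q_\psi$ is by construction an isometry of $H^p_{A,\psi}(H;X)$ into $T^{p,2}(H;X)$. On the dense subspace $\overline{\ran(A)}\otimes X$, the reproducing identity \eqref{eq:SQ} reads $S_{\widetilde\psi}Q_\psi f=f$; since both $Q_\psi$ and $S_{\widetilde\psi}$ are bounded between the relevant Banach spaces, this identity extends by continuity to all $f\in H^p_A(H;X)$. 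Hence $F:=Q_\psi f\in T^{p,2}(H;X)$ is an admissible choice in the infimum defining the $\widetilde H^p_{A,\widetilde\psi}(H;X)$-norm, and one concludes
\[
  \|f\|_{\widetilde H^p_{A,\widetilde\psi}(H;X)}\le\|Q_\psi f\|_{T^{p,2}(H;X)}=\|f\|_{H^p_{A,\psi}(H;X)}\eqsim\|f\|_{H^p_A(H;X)}.
\]

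The only point requiring a little care is the transfer of the reproducing identity $S_{\widetilde\psi}Q_\psi f=f$ from the dense subspace $\overline{\ran(A)}\otimes X$ to the completion $H^p_A(H;X)$; however, this is a routine density argument given the boundedness of both operators, and I do not expect any substantial obstacle. Everything else reduces to invoking results already established earlier in the paper (notably the preceding proposition, Proposition~\ref{prop:S_psi}, Theorem~\ref{thm:hardy}, Remark~\ref{rem:decay}(ii), and the reproducing formula~\eqref{eq:SQ}).
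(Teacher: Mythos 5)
Your argument is correct and follows the route the paper implicitly intends: the set equality is exactly the surjectivity of $S_{\widetilde\psi}$ from the preceding proposition, the lower bound on the infimum norm comes from the boundedness of $S_{\widetilde\psi}$ established in Proposition~\ref{prop:S_psi} (together with Theorem~\ref{thm:hardy}), and the upper bound is obtained by producing, via Remark~\ref{rem:decay}(ii) and the reproducing identity \eqref{eq:SQ}, the explicit admissible representative $F=Q_\psi f$. The one step you flag as needing care — extending $S_{\widetilde\psi}Q_\psi=I$ from the dense subspace $\overline{\ran(A)}\otimes X$ to all of $H^p_A(H;X)$ — is indeed routine since $Q_\psi$ extends isometrically to the completion by definition of the $H^p_{A,\psi}$-norm and $S_{\widetilde\psi}$ is bounded, so there is no gap.
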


As a further consequence we deduce an interpolation result for Hardy spaces from the following general principle (see Theorem 1.2.4 in \cite{triebel}):
Let $X_0,X_1$ and $Y_0,Y_1$ be two interpolation couples such that
 there exist operators \(S \in \mathcal{L}(Y_{i},X_{i})\) and
\(Q \in \mathcal{L}(X_{i},Y_{i})\) with \(SQx =x\) for all \(x \in X_{i}\) and \(i=0,1\). 
Then $[X_0,X_1]_\theta = S[Y_0,Y_1]_\theta$.
Here we take \((\psi,\widetilde{\psi})\) as in the Calder\'on reproducing formula with sufficient decay, \(S=S_{\widetilde\psi}\) and \(Q=Q_{{\psi}}\).

\begin{corollary} Let $H$ be a Hilbert space and $X$ be a UMD space.
For all \(1<p_0<p_1<\infty\) and \(0<\theta<1\) we have
\begin{equation*}
  [H_A^{p_0}(H;X), H_A^{p_1}(H;X)]_\theta = H_A^{p_\theta}(H;X),
  \qquad \frac{1}{p_\theta}=\frac{1-\theta}{p_0}+\frac{\theta}{p_1}.
\end{equation*}
\end{corollary}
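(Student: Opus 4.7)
The plan is to invoke the Triebel principle quoted immediately before the statement, with the concrete choices $Y_i = T^{p_i,2}(H;X)$, $X_i = H_A^{p_i}(H;X)$, $Q = Q_\psi$, and $S = S_{\widetilde\psi}$, and then identify the resulting interpolation space via the already established interpolation theorem for tent spaces (Theorem~\ref{thm:Tp-interpolation}). All the analytic ingredients have essentially been assembled; the task is to select the auxiliary functions once and for all in a way that works simultaneously at both endpoints $p_0$ and $p_1$.

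First I would fix, via Lemma~\ref{lem:newPsi} (Calder\'on's reproducing formula), a pair $(\psi,\widetilde\psi)\in\Psi(S_\theta)\times\Psi(S_\theta)$ that has sufficient decay in the sense of Definition~\ref{def:decay} with respect to \emph{both} $p_0$ and $p_1$: since sufficient decay only requires each of $\psi,\widetilde\psi$ to vanish at $0$ and $\infty$ fast enough, and since $L^{p_i}(X)$ has some type $\tau_i\in[1,2]$ and cotype $\gamma_i\in[2,\infty]$, I can simply take both $\psi$ and $\widetilde\psi$ with polynomial decay of some order larger than $n\cdot\max_{i=0,1}\{1/\tau_i, 1-1/\gamma_i\}+1$ at $0$ and $\infty$, while still satisfying \eqref{eq:Calderon}. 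For such a pair, Proposition~\ref{prop:S_psi} shows that $S_{\widetilde\psi}$ is bounded $T^{p_i,2}(H;X)\to H_A^{p_i}(H;X)$ for $i=0,1$, while $Q_\psi$ is an isometric embedding $H_A^{p_i}(H;X)\hookrightarrow T^{p_i,2}(H;X)$ by the very definition of the Hardy space norm (and using that $H_A^p(H;X)=H^p_{A,\psi}(H;X)$ by Theorem~\ref{thm:hardy}(i)).

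The reproducing identity $S_{\widetilde\psi}Q_\psi f = f$ holds on the dense subspace $\overline{\ran(A)}\otimes X$ by \eqref{eq:SQ}; since both operators are bounded between the spaces in question, it extends by continuity to all of $H_A^{p_i}(H;X)$, so $SQ = I$ on $X_i$ for $i=0,1$. Triebel's principle therefore gives
\begin{equation*}
[H_A^{p_0}(H;X), H_A^{p_1}(H;X)]_\theta
 = S_{\widetilde\psi}\bigl([T^{p_0,2}(H;X), T^{p_1,2}(H;X)]_\theta\bigr).
\end{equation*}
Applying Theorem~\ref{thm:Tp-interpolation}, the right-hand side equals $S_{\widetilde\psi}(T^{p_\theta,2}(H;X))$, and by the preceding Corollary this is precisely $\widetilde H^{p_\theta}_{A,\widetilde\psi}(H;X) = H_A^{p_\theta}(H;X)$ with equivalent norms.

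The main obstacle, and essentially the only point requiring attention, is the simultaneous choice of $(\psi,\widetilde\psi)$ at the two endpoints: one has to check that the sufficient-decay conditions of Proposition~\ref{prop:Schur}, which depend on the type and cotype of $L^{p_i}(X)$, can be satisfied uniformly in $i\in\{0,1\}$. This is a routine book-keeping exercise since the conditions only require polynomial decay of some finite order at $0$ and $\infty$, and Lemma~\ref{lem:newPsi} allows us to prescribe these orders arbitrarily while preserving \eqref{eq:Calderon}. Everything else is formal once this choice is made.
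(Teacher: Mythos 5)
Your proof takes exactly the same route as the paper's, which invokes the Triebel retraction/coretraction principle with $Q=Q_\psi$, $S=S_{\widetilde\psi}$, $Y_i=T^{p_i,2}(H;X)$, $X_i=H_A^{p_i}(H;X)$, and then identifies $[T^{p_0,2},T^{p_1,2}]_\theta=T^{p_\theta,2}$ via Theorem~\ref{thm:Tp-interpolation}. You are slightly more explicit than the paper in spelling out that $(\psi,\widetilde\psi)$ must be chosen, via Lemma~\ref{lem:newPsi}, with enough decay to have the sufficient-decay property simultaneously for $p_0$ and $p_1$ (the paper leaves this implicit in the phrase ``with sufficient decay''), but the argument is the same.
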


\section{Hardy spaces associated with differential operators}
\label{sect:diffop}
The construction described in Section \ref{sect:hardy} is particularly relevant when dealing with differential operators \(A=D_B\) in \(L^{2}(\C \oplus \C^{n})\), where
\begin{equation*}
 D_{B} = \Big( \begin{array}{cc}0 & -{\rm div} B \\ 
               \nabla & 0\end{array}\Big)
\end{equation*}
with \(B\) a multiplication operator on \(L^2(\C^n)\) given by an $(n\times n)$-matrix with 
\(L^{\infty}\) entries.
Such operators have been considered in connection with 
the celebrated square root problem of Kato, which was originally solved in~\cite{AHLMT}. A new proof based on first order methods was devised in \cite{AKM}, where it was shown that $D_B$ bisectorial on $L^2(\C\oplus \C^n)$ and satisfies off-diagonal estimates of any order.

In \cite{HMP}, the $H^\infty$-functional calculus of $D_B\otimes I_{X}$ in $L^p(X\oplus X^n)$  
is described in terms of $R$-boundedness of the resolvents.
Although these resolvent conditions, and hence the functional calculus, may fail
on \(L^{p}(X\oplus X^n)\) in general, it follows from Section \ref{sect:hardy} that these operators do
have an $H^\infty$-functional calculus on 
\(H^{p}_{D_{B}}(\C\oplus\C^{n} ; X)\), which in particular implies Kato type estimates in this space.

To express these estimates, 
observe first that \(\overline{\ran(D_B)}=\overline{\ran(\mathrm{div} B)}\oplus\overline{\ran(\nabla)}\).
Let us hence write a function \(f\in\overline{\ran(D_B)}\otimes X\) as \((f_0,f_1)\), where
\begin{equation*}
  f_0\in\overline{\ran(\mathrm{div} B)}\otimes X\subseteq L^2(\C)\otimes X,\qquad
  f_1\in\overline{\ran(\nabla)}\otimes X\subseteq L^2(\C^n)\otimes X
\end{equation*}
denote the \(X\)-valued and \(X^{n}\)-valued parts of \(f\), respectively.
Defining
\begin{equation*}
  H^{p}_{D_{B}}(\C\oplus\C^{n} ; X) := H^{p}_{D_{B},\psi}(\C\oplus\C^{n} ; X)
\end{equation*} 
by means of the (even!) function 
\(\psi(z) = (\sqrt{z^{2}})^{N}e^{-\sqrt{z^{2}}}\) with $N$ large enough, 
we note that $\psi(t D_B) = \phi(t^2D_B^2)$, where \(\phi(z) =
\sqrt{z}^N e^{-{\sqrt{z}}}\) and the operator
\begin{equation*}
  D_B^2 =  \Big( \begin{array}{cc}
   -{\rm div}B\nabla & 0 \\ 0 & -\nabla{\rm div}B\end{array}\Big),
\end{equation*}
and hence \(\phi(t^2 D_B^2)\), is diagonal with respect to the splitting \(f=(f_0,f_1)\). In particular this shows that
\begin{equation*}
  \|(f_0,f_1)\|_{H^{p}_{D_{B}}(\C\oplus\C^{n} ; X)}
 \eqsim
  \|(f_0,0)\|_{H^{p}_{D_{B}}(\C\oplus\C^{n} ; X)}
  +\|(0,f_1)\|_{H^{p}_{D_{B}}(\C\oplus\C^{n} ; X)}.
\end{equation*}
Hence also the full space \(H^p_{D_B}(\C\oplus\C^n;X)\) (constructed as the completion of \(\overline{\ran(D_B)}\otimes X\) with respect to the above-given norm) has the natural direct sum splitting into ``\(X\)-valued'' and ``\(X^n\)-valued'' components. Let us denote these components by \(H^{p}_{D_{B}}(\C; X)\) and \(H^{p}_{D_{B}}(\C^n; X)\), so that
\begin{equation*}\begin{split}
  \n f_0\n_{H^{p} _{D_{B}}(\C; X)} &:= \n (f_0,0) \n_{ H^{p}_{D_{B}}(\C\oplus\C^{n} ; X)},\\
  \n f_1\n_{H^{p} _{D_{B}}(\C^{n} ; X)} &:= \n (0,f_1)\n_{H^{p}_{D_{B}}(\C\oplus\C^{n} ; X)}.
\end{split}\end{equation*}
Then we are ready to state:

\begin{theorem}
Let \(X\) be a UMD space, \(1<p<\infty\), and \(D_{B}\) be as above. Then 
\begin{equation*}
 \|\sqrt{-{\rm div} B\nabla}u\|_{H^{p}_{D_{B}}(\C ; X)} 
  \eqsim \|\nabla u\|_{H^{p}_{D_{B}}(\C^{n} ; X)}
\end{equation*}
for all \(u\in\dom(\nabla)\otimes X\subset L^2(\C)\otimes X\).
\end{theorem}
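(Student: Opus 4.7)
The plan is to exploit the polar decomposition \(D_{B}=\mathrm{sgn}(D_{B})\,\abs{D_{B}}\), where \(\mathrm{sgn}(z):=z/\sqrt{z^{2}}\) and \(\abs{D_{B}}:=\sqrt{D_{B}^{2}}\). On the bisector \(S_{\theta}\), the function \(\mathrm{sgn}\) belongs to \(H^{\infty}(S_{\theta})\) (it is bounded by \(1\) and holomorphic on each connected component of the bisector), and \(\mathrm{sgn}^{2}\equiv 1\) on \(S_{\theta}\). By Theorem~\ref{thm:hardy}(ii), \(\mathrm{sgn}(D_{B})\) extends to a bounded operator on \(H^{p}_{D_{B}}(\C\oplus\C^{n};X)\), and the identity \(\mathrm{sgn}(D_{B})^{2}f=f\) holds for every \(f\in H^{p}_{D_{B}}(\C\oplus\C^{n};X)\) via the functional calculus together with Lemma~\ref{lem:justification}(1) and the fact that the relevant functions vanish on \(\nul(D_{B})\).

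Next I would verify the crucial intertwining identity on the dense subspace \(\dom(\nabla)\otimes X\). Since \(D_{B}^{2}\) is block-diagonal with \(-\mathrm{div}\,B\nabla\) in the first slot and \(-\nabla\,\mathrm{div}\,B\) in the second, the functional calculus of \(D_{B}\) applied to even functions respects the decomposition \((f_{0},f_{1})\), while the calculus applied to \(\mathrm{sgn}(z)=z/\sqrt{z^{2}}\) (which is odd) intertwines the two slots. Concretely, for \(u\in\dom(\nabla)\otimes X\) one has \(\abs{D_{B}}(u,0)=(\sqrt{-\mathrm{div}\,B\nabla}\,u,0)\) and \(D_{B}(u,0)=(0,\nabla u)\), so the identity \(D_{B}=\mathrm{sgn}(D_{B})\abs{D_{B}}\) in the \(H^{\infty}\)-calculus sense yields
\begin{equation*}
  \mathrm{sgn}(D_{B})\bigl(\sqrt{-\mathrm{div}\,B\nabla}\,u,\,0\bigr)=(0,\nabla u).
\end{equation*}
This identity is first justified in \(L^{2}(\C\oplus\C^{n})\) (and then tensored with \(X\)) by noting that \(\sqrt{-\mathrm{div}\,B\nabla}\,u\in\overline{\ran(\mathrm{div}\,B)}\) belongs to the domain where the relevant homomorphism properties from Lemma~\ref{lem:justification} apply.

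Combining these two ingredients, the boundedness of \(\mathrm{sgn}(D_{B})\) on \(H^{p}_{D_{B}}(\C\oplus\C^{n};X)\) gives
\begin{equation*}
  \|\nabla u\|_{H^{p}_{D_{B}}(\C^{n};X)}
   =\|(0,\nabla u)\|_{H^{p}_{D_{B}}}
   =\|\mathrm{sgn}(D_{B})(\sqrt{-\mathrm{div}\,B\nabla}\,u,0)\|_{H^{p}_{D_{B}}}
   \lesssim\|\sqrt{-\mathrm{div}\,B\nabla}\,u\|_{H^{p}_{D_{B}}(\C;X)},
\end{equation*}
and the reverse estimate follows by applying \(\mathrm{sgn}(D_{B})\) once more and using \(\mathrm{sgn}(D_{B})^{2}=I\) on \(\overline{\ran(D_{B})}\) (and hence on the Hardy space completion).

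The main obstacle I expect is the bookkeeping around the domains: one must check that \(\sqrt{-\mathrm{div}\,B\nabla}\,u\) and \(\nabla u\) actually define elements in the appropriate completions \(H^{p}_{D_{B}}(\C;X)\) and \(H^{p}_{D_{B}}(\C^{n};X)\) for \(u\in\dom(\nabla)\otimes X\), and that the chain of identities above can be carried out on a dense subclass where all the operators are classically defined. This is handled by approximation using \(u_{\varepsilon}:=(I+\varepsilon D_{B}^{2})^{-1}u\), which lies in \(\dom(D_{B}^{2})\otimes X\subseteq\overline{\ran(D_{B}^{2})}\otimes X\), and by invoking Lemma~\ref{lem:ranges} and the composition rules of Lemma~\ref{lem:justification} to transfer the algebraic identity from \(L^{2}\) to the Hardy-space setting.
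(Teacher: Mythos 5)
Your proof takes essentially the same approach as the paper, only reframing the key computation as a polar decomposition $D_{B}=\mathrm{sgn}(D_{B})\,|D_{B}|$ rather than the paper's direct use of the matrix form of $\phi(D_{B})=D_{B}/\sqrt{D_{B}^{2}}$ applied to $(f_0,0)$. The core ingredients are identical: boundedness and invertibility of $\mathrm{sgn}(D_{B})$ on $H^{p}_{D_{B}}(\C\oplus\C^{n};X)$ via Theorem~\ref{thm:hardy}(ii) and $\mathrm{sgn}^{2}\equiv 1$, and the off-diagonal matrix structure of $\mathrm{sgn}(D_B)$ yielding $\mathrm{sgn}(D_{B})\bigl(\sqrt{-\mathrm{div}\,B\nabla}\,u,0\bigr)=(0,\nabla u)$.

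One step you assert without justification deserves to be made explicit. The identity $|D_{B}|(u,0)=(\sqrt{-\mathrm{div}\,B\nabla}\,u,0)$ for $u\in\dom(\nabla)\otimes X$ tacitly requires $\dom(\nabla)\subseteq\dom(\sqrt{-\mathrm{div}\,B\nabla})$. This is exactly the content of the Kato square root problem, which is a deep theorem and not merely bookkeeping; the paper invokes it explicitly via \cite{AHLMT,AKM}. You could alternatively derive it from the bounded $H^{\infty}$-calculus of $D_B$ on $L^2$, which gives $\dom(D_B)=\dom(|D_B|)$ and hence the domain identity block-by-block, but either way you should name the ingredient rather than absorb it into a domain remark. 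With that acknowledged, your argument is correct and is the paper's argument in slightly different clothing.
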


\begin{proof}
We know from~\cite{AKM} that \((I+zD_B)^{-1}\) satisfies off-diagonal estimates of arbitrary order and that \(D_B\) has an 
\(H^{\infty}(S_{\theta})\)-calculus 
on \(L^2(\C\oplus\C^n)\).

Consider the function \(\phi(z)=z/\sqrt{z^2}\in H^{\infty}(S_{\theta})\). By the boundedness of the 
\(H^{\infty}\)-calculus 
and the identity \(1/\phi(z)=\phi(z)\), 
\begin{equation}\label{eq:phiDB}
  \|\phi(D_B)f\|_{H^p_{D_B}(\C\oplus\C^n;X)}\eqsim\|f\|_{H^p_{D_B}(\C\oplus\C^n;X)},\qquad
  f\in \overline{\ran(D_B)}\otimes X.
\end{equation}
Observing that
\begin{equation*}
  \phi(D_B)=\begin{pmatrix}
   0 & -\mathrm{div}B(-\nabla B\mathrm{div})^{-1/2} \\
   \nabla(-\mathrm{div}B\nabla)^{-1/2} & 0
  \end{pmatrix}
\end{equation*}
and writing \eqref{eq:phiDB} for \(f=(f_0,0)\) gives
\begin{equation}\label{eq:fnought}
  \|\nabla(-\mathrm{div}B\nabla)^{-1/2}f_0\|_{H^p_{D_B}(\C^n;X)}
  \eqsim\|f_0\|_{H^p_{D_B}(\C;X)},\qquad f_0\in \overline{\ran(\mathrm{div}B)}\otimes X.
\end{equation}

Let then \(u\in\dom(\nabla)\otimes X\). By the solution of Kato's problem we have \(\dom(\nabla)=\dom(\sqrt{-\mathrm{div}B\nabla})\). Substituting 
\begin{equation*}
  f_0=\sqrt{-\mathrm{div}B\nabla}u\in\ran(\sqrt{-\mathrm{div}B\nabla})\otimes X
  \subseteq\overline{\ran(-\mathrm{div}B\nabla)}\otimes X
  \subseteq\overline{\ran(\mathrm{div}B)}\otimes X
\end{equation*}
in \eqref{eq:fnought}, we obtain the assertion. We used above the inclusion \(\ran(A^{1/2})\subseteq\overline{\ran(A)}\), which is true for all sectorial operators (see~\cite{Haase}, Corollary~3.1.11).
\end{proof}


Let \(D\) be the unperturbed operator \(D_{I}\). Observe that \(D^{2}(f,0)= (\Delta f,0)\) and then, whenever \(\psi\) is even, \(\psi(tD)(f,0)=(\psi(t\sqrt{\Delta})f,0)\).
The space \(H^{p}_{D}(\C,X)\) is then the classical Hardy space.

\begin{theorem}
\label{thm:hisl}
Let \(X\) be UMD. Then \(H^{p}_{D}(\C,X) = L^{p}(X)\) for all \(1<p<\infty\).
\end{theorem}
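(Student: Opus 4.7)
The plan is to establish the two-sided estimate
\[
  \|f\|_{L^p(\R^n;X)} \eqsim \|Q_\psi f\|_{T^{p,2}(X)}
\]
on the dense subspace $C_c^\infty(\R^n)\otimes X$, taking $\psi(z)=(\sqrt{z^2})^{N}e^{-\sqrt{z^2}}$ an even function with $N$ large enough to give sufficient decay in the sense of Definition~\ref{def:decay}. Since $\psi$ is even, $\psi(tD)(f,0)=(\psi(t\sqrt\Delta)f,0)$, so the $H^p_D(\C;X)$-norm of $(f,0)$ equals $\|Q_\psi f\|_{T^{p,2}(X)}$ with $Q_\psi f(y,t)=\psi(t\sqrt\Delta)f(y)$. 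The identification of completions then follows.

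\textbf{Step 1: the estimate $\|Q_\psi f\|_{T^{p,2}(X)}\lesssim \|f\|_{L^p(X)}$.} I would apply Theorem~\ref{thm:sing} to the scalar singular integral $Sf:=[(y,t)\mapsto \psi(t\sqrt\Delta)f(y)]$, whose kernel is the translation-invariant convolution kernel $h_t(y-z)$ of $\psi(t\sqrt\Delta)$. The four hypotheses would be verified as follows: (1) $S:L^2\to T^{2,2}$ reduces by Fubini to the Littlewood--Paley bound $\int_0^\infty\|\psi(t\sqrt\Delta)f\|_2^2\,\ud t/t \lesssim \|f\|_2^2$, which follows from the spectral theorem for $\sqrt\Delta$; (2) and (3) the size and regularity of $h_t$ of order $N$ are classical for derivatives of the Poisson kernel on $\R^{n+1}_+$, to which $\psi(t\sqrt\Delta)$ corresponds via subordination; (4) the cancellation $\int h_t = \widehat{h_t}(0) = \psi(0) = 0$ is immediate. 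Theorem~\ref{thm:sing} then produces the desired boundedness of $S\otimes I_X$.

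\textbf{Step 2: the reverse estimate, via duality.} By Calder\'on's reproducing formula (Lemmas~\ref{lem:newPsi} and \ref{lem:reproducing}), one chooses $\widetilde\psi\in\Psi(S_\theta)$ of arbitrarily prescribed decay such that $f = S_{\widetilde\psi}Q_\psi f = \int_0^\infty \widetilde\psi(t\sqrt\Delta)\psi(t\sqrt\Delta) f\,\ud t/t$ in $L^2$ for every $f\in L^2(\R^n)\otimes X = \overline{\ran(\sqrt\Delta)}\otimes X$ (noting that $\sqrt\Delta$ is injective on $L^2$). For $g\in L^{p'}(\R^n;X^*)$, Fubini and self-adjointness of $\sqrt\Delta$ on $L^2$ yield
\[
  \langle f,g\rangle = \iint_{\R^{n+1}_+} \big\langle \psi(t\sqrt\Delta)f(y),\overline{\widetilde\psi}(t\sqrt\Delta)g(y)\big\rangle\,\frac{\ud y\,\ud t}{t},
\]
which, modulo a factor of $\abs{B(0,1)}$, is the tent space duality pairing. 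By $T^{p,2}(X)^*\eqsim T^{p',2}(X^*)$ this is bounded by $\|Q_\psi f\|_{T^{p,2}(X)}\,\|Q_{\overline{\widetilde\psi}}g\|_{T^{p',2}(X^*)}$. Since $X^*$ is also UMD, Step~1 applied with $(X^*,\overline{\widetilde\psi},p')$ in place of $(X,\psi,p)$ gives $\|Q_{\overline{\widetilde\psi}}g\|_{T^{p',2}(X^*)} \lesssim \|g\|_{L^{p'}(X^*)}$, and taking the supremum over $g$ completes the estimate.

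\textbf{Main obstacle and conclusion.} The technical heart is Step~1, specifically the pointwise kernel bounds on $h_t$ with decay/regularity matching the conditions of Theorem~\ref{thm:sing}; these are classical for derivatives of the Poisson kernel on $\R^{n+1}_+$ but must be set up carefully. The remaining density/identification issue is handled by the surjectivity of $S_{\widetilde\psi}:T^{p,2}(X)\to H^p_D(\C;X)$ (Proposition following Theorem~\ref{thm:hardy}): combined with Step~2 one obtains a bounded inclusion $H^p_D(\C;X)\hookrightarrow L^p(X)$, while Step~1 combined with the surjectivity gives the reverse inclusion $L^p(X)\hookrightarrow H^p_D(\C;X)$. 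These are mutually inverse (by Calder\'on's formula on the common dense subspace $L^2(\R^n)\otimes X$), yielding the claimed isomorphism.
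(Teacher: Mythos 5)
Your argument matches the paper's proof: both apply Theorem~\ref{thm:sing} to the kernel of $(t\sqrt\Delta)^{N}e^{-t\sqrt\Delta}$ (the Poisson--semigroup-based $\psi$) to obtain $\|f\|_{H^{p}_{D}(\C,X)}\lesssim\|f\|_{L^{p}(X)}$, and both derive the reverse inequality by pairing against $g\in L^{p'}(X^{*})$ via Calder\'on's reproducing formula, tent-space duality, and the direct estimate applied on the dual side. The only presentational difference is that you unpack the tent-space duality pairing and the application of Step~1 to $(X^{*},\overline{\widetilde\psi},p')$ explicitly, where the paper compresses this into the single bound $|\langle f,g\rangle|\lesssim\|f\|_{H^{p}_{D}(\C,X)}\|g\|_{H^{p'}_{D}(\C;X^{*})}\lesssim\|f\|_{H^{p}_{D}(\C,X)}\|g\|_{L^{p'}(X^{*})}$.
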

\begin{proof}
Let us denote by \(N\) the smallest integer greater than \(\frac{n}{2}\) and, 
for functions \(f \in C_c\), define 
$$Sf(y,t) = \int  _{\R^{n}} k_{t}(y,z)f(z)\ud z,$$ where
$$k_{t}(y,z) = t^{N} \frac{\partial^{N}}{\partial
t^{N}}\Big(t^{-n}p\big(\frac{(y-z)}{t}\big)\Big),$$ and
\(p(w)=1/{(1+w^{2})^{\frac{n+1}{2}}}.\)
For a fixed \(t>0\), \(f \mapsto Sf(\cdot,t)\) is thus a Fourier multiplier with symbol \(m_t(\xi)=(t|\xi|)^{N}e^{-t|\xi|}\). This implies assumptions (1) and (4) in 
Theorem \ref{thm:sing}. Assumptions (2) and (3), with \(\alpha = \beta = 1\),
follow from direct computations of the $N$-th derivative of 
\(t \mapsto t^{-n}p(\frac{|x|}{t})\) and the mean value theorem.
Now, for \(f \in L^{p}(X)\), 
letting 
$$Pf(y,t) :=
\psi(tD)(f,0)(y) = ((t\sqrt{\Delta})^{N}e^{-t\sqrt{\Delta}}f(y),0)$$ 
and applying Theorem \ref{thm:sing}, we thus obtain that
$$\|f\|_{H^{p}_{D}(\C,X)} \lesssim \|f\|_{L^{p}(X)}$$ for all \(f \in L^{p}(X)\).
Now let $f \in L^{p}(X)$ and $g \in L^{p'}(X^{*})$, and denote by $\langle f,g \rangle$ their duality product.
By Calder\'on's reproducing formula there exists $\widetilde{\psi}$ (with arbitrary decay) such that
\begin{equation*}
   \langle f,g \rangle = 
  \int_{0}^{\infty}\langle \psi(t\Delta)f,\widetilde{\psi}(t\Delta)^{*}g \rangle \frac{\ud t}{t}.
\end{equation*}
Therefore
\begin{equation*}\begin{split}
  |\langle f,g \rangle|
  \lesssim \|f\|_{H^{p}_{D}(\C,X)}\|g\|_{H^{p'}_{D}(\C;X^{*})}
  \lesssim \|f\|_{H^{p}_{D}(\C,X)}\|g\|_{L^{p'}(X)},
\end{split}\end{equation*}
and hence \(\|f\|_{L^p(X)}\lesssim \|f\|_{H^{p}_{D}(\C,X)}\).
\end{proof}

\bibliography{tent}{}
\bibliographystyle{plain}

\end{document}